\documentclass[11pt]{article}    
\usepackage{mathtools}            
\usepackage{amsmath,amssymb,amsthm}
\usepackage{mathptmx}
\usepackage{mathrsfs,bm,bbm}
\usepackage{xcolor}
\usepackage{appendix}
\usepackage{hyperref}
\hypersetup{
	colorlinks,
	linkcolor={red!50!black},
	citecolor={green!50!black},
	urlcolor={blue!80!black}
}

\textwidth=6.5in
\textheight=8.9in
\topmargin=-0.6in
\oddsidemargin=0.1in
\evensidemargin=0.1in
\parindent=8mm
\parskip= 3pt
\frenchspacing

\makeatletter
\newsavebox\myboxA
\newsavebox\myboxB
\newlength\mylenA
\newcommand*\xunderline[2][0.75]{%
	\sbox{\myboxA}{$\m@th#2$}%
	\setbox\myboxB\null% Phantom box
	\ht\myboxB=\ht\myboxA%
	\dp\myboxB=\dp\myboxA%
	\wd\myboxB=#1\wd\myboxA% Scale phantom
	\sbox\myboxB{$\m@th\underline{\copy\myboxB}$}%  Overlined phantom
	\setlength\mylenA{\the\wd\myboxA}%   calc width diff
	\addtolength\mylenA{-\the\wd\myboxB}%
	\ifdim\wd\myboxB<\wd\myboxA%
	\rlap{\hskip 0.5\mylenA\usebox\myboxB}{\usebox\myboxA}%
	\else
	\hskip -0.5\mylenA\rlap{\usebox\myboxA}{\hskip 0.5\mylenA\usebox\myboxB}%
	\fi}
\makeatother

\newcommand{\mmm}{\mathrel{}\mid\mathrel{}}

\newcommand{\bs}{\bm{\sigma}}
\newcommand{\iii}{i_1,\dots,i_p}
\newcommand{\E}{\mathbb{E}}	
\newcommand{\cQ}{\mathcal{Q}}

\newcommand{\sE}{\mathscr{B}}
\newcommand{\sP}{\mathscr{P}}

\newcommand{\sC}{\mathscr{C}}
\newcommand{\sL}{\mathscr{L}}

\newcommand{\R}{\mathbb{R}}

\newcommand{\tr}{\operatorname{tr}}

\newcommand{\eps}{\epsilon}

\newcommand{\bvs}{\vec{\bm{\sigma}}}

\newcommand{\vs}{\vec{\sigma}}
\newcommand{\vb}{\vec{\beta}}
\newcommand{\vh}{\vec{h}}

\newcommand{\vx}{\vec{x}}
\newcommand{\vex}{\xunderline{x}}
\newcommand{\vbQ}{\xunderline{\bm{Q}}}
\newcommand{\bS}{\mathbb{S}}

\newcommand{\bvb}{\vec{\bm{\beta}}}

\newcommand{\bR}{\bm{R}}

\newcommand{\bQ}{\bm{Q}}
\newcommand{\bA}{\bm{A}}
\newcommand{\bL}{\bm{\Lambda}}

\newcommand{\bd}{\bm{D}}

\newcommand{\bC}{\bm{C}}
\newcommand{\be}{\bm{E}}
\newcommand{\bE}{\overline{\bm{E} }}

\newcommand{\bB}{\bm{B}}

\newcommand{\bI}{\bm{I}}
\newcommand{\bxi}{\bm{\xi}}
\newcommand{\btheta}{\bm{\theta}}
\newcommand{\Sum}{\mathrm{Sum}}
\newcommand{\diag}{\mathrm{diag}}

\newcommand{\trans}{\mathsf{T}}

\renewcommand{\phi}{\varphi}
\renewcommand{\epsilon}{\varepsilon}

\newenvironment{linsys}[2][m]{%
	\setlength{\arraycolsep}{.1111em} % p. 170 TeXbook; a medmuskip
	\begin{array}[#1]{@{}*{#2}{rc}r@{}} 
	}{%
\end{array}}

\theoremstyle{definition}

\theoremstyle{plain}
\newtheorem{theo}{Theorem}
\newtheorem*{theo*}{Theorem}

\newtheorem{proposition}{Proposition}
\newtheorem{lem}{Lemma}
\theoremstyle{remark}
\newtheorem{rem}{Remark}

\begin{document}

\title{The Crisanti--Sommers Formula \\ for Spherical Spin Glasses with Vector Spins}
\author{Justin Ko\thanks{\textsc{\tiny Department of Mathematics, University of Toronto, jko@math.toronto.edu. Partially supported by NSERC.}
}\\
}
\date{}
\maketitle
\begin{abstract}
We obtain the analogue of the Crisanti--Sommers variational formula for spherical spin glasses with vector spins. This formula is derived from the discrete Parisi variational formula for the limit of the free energy of constrained copies of spherical spin glasses. In vector spin models, the variations of the functional order parameters must preserve the monotonicity of matrix paths which introduces a new challenge in contrast to the derivation of the classical Crisanti--Sommers formula. 
\end{abstract} 
\vspace{0.5cm}
\emph{Key words}: spin glasses, free energy, $p$-spin interactions, spherical models, vector spins\\
\emph{AMS 2010 subject classification}: 60F10, 60G15, 60K35, 82B44

\section{Introduction}

The free energy formula for spherical spin glass models was discovered by Crisanti and Sommers in \cite{crisanti1992sphericalp}. This formula is the analogue of the classical Parisi formula for the Sherrington--Kirkpatrick model \cite{parisi1979infinite,parisi1980sequence} proved in \cite{talagrand2006parisi}. The Parisi formula for the limiting free energy of spherical spin glasses was proven rigorously by Talagrand for even-$p$-spin models in \cite{TSPHERE} and extended to general mixed $p$-spin models by Chen in \cite{CASS}. The equivalence of the Parisi formula and the Crisanti--Sommers formula was proved in \cite{TSPHERE} by showing that both functionals satisfy the same critical point conditions.

In this paper we derive the analogue of the Crisanti--Sommers functional for the spherical vector spin models and show that the limit of the free energy is obtained at the minimum of this functional.  This variational formula for one dimensional vector spins is consistent with the classical Crisanti--Sommers formula.  

Our starting point is the discrete Parisi variational formula for the limit of the free energy of constrained copies of spherical spin glasses proved in \cite{kosphere}. We analyze the critical points of this functional and show a similar reduction can be done in the vector spin case. Unfortunately, the matrix valued functional order parameters in vector spin models may not necessarily have positive definite increments at the minimizer, so the variations can only recover a system of critical point inequalities, which is insufficient to deduce the equivalence of the Crisanti--Sommers and Parisi functionals. To fix this, we will add a barrier function to the functionals that penalizes paths with degenerate increments and study the critical point conditions satisfied by the modified functionals. This approach is explained in more detail in subsection~\ref{subsec:outline}.

The one-dimensional Crisanti--Sommers formula has been studied extensively in the literature. The Parisi and Crisanti--Sommers variational problems were studied in \cite{propertiesparisimeasures,phasediagram,TSPHERE}. The Crisanti--Sommers formula has been applied to derive variational principles for the ground state energy in \cite{parisigroundstate2,parisigroundstate1,dualitygroundstate}. These variational formulas were used to explore related problems such as phase diagrams \cite{rsbspherical,subag2018rsb}, chaos \cite{chenchaos,chen2017temperature} and the geometry of the Gibbs measure \cite{geometrysphere,subag2018free}. The vector spin version of the Crisanti--Sommers formula can be used to study similar questions related to vector spin models.

\subsection{The Limit of the Free Energy and the Parisi Formula}

Multiple copies of mixed even-$p$-spin spherical spin glasses with constrained self overlaps was first studied in \cite{freeenergycostforultrametricity,replicaonandoff}. A rigorous upper bound for the free energy of this model was proved in \cite{PTSPHERE} by Panchenko and Talagrand using the Guerra replica symmetry breaking bound \cite{guerra2003broken}. The sharp lower bound was proved in \cite{kosphere} using the synchronization mechanism described in \cite{panchenko2015free,PVS,PPotts} and the Aizenman--Sims--Starr scheme \cite{AS2} for spherical spin glasses described in \cite{CASS}. These results are a consequence of the ultrametric structure of generalized overlaps that satisfy the Ghirlanda--Guerra identities \cite{GG,guerra1996overlap} which was proved in \cite{PUltra}. Combining the upper and lower bound results in a discrete Parisi variational formula for the free energy of spherical spin glasses with vector spins. 

We start by describing the spherical spin glass model with vector spins and the Parisi formula for the limit of its free energy. Fix integer $n \geq 1$. Let $S_N$ be the sphere in $\R^N$ of radius $\sqrt{N}$. A configuration of $n$ copies of spherical spin glasses can be viewed as vector spins with coordinates restricted to lie on $S_N$,
\begin{equation}
\bvs = (\vs_1, \dots, \vs_N) \in S_N^n \quad \text{where} \quad S_N^n = \big\{ \bvs \in (\R^N)^n \mmm \bs(j) \in S_N \text{ for all } j \leq n \big\}.
\end{equation}
The $j$th coordinate of $\bvs$ is denoted by $\bs(j)$ and the vector entries of $\bvs$ are denoted by
\begin{equation}
\vs_i = \big( \vs_i(1), \dots, \vs_i(n) \big) \in \R^n.
\end{equation}
For $p\geq 2$, the $p$-spin Hamiltonian is denoted by
\begin{equation}
H_{N,p}(\bs(j) ) = \frac{1}{N^{(p-1)/2}} \sum_{1 \leq \iii \leq N} g_{\iii} \vs_{i_1}(j) \cdots \vs_{i_p}(j),
\end{equation}
where $g_{\iii}$ are i.i.d. standard Gaussians for all $p\geq 2$ and indices $(\iii)$. The corresponding mixed $p$-spin Hamiltonian for the $j$th copy at inverse temperatures $(\vb_p)_{p \geq 2}$ is denoted by
\begin{equation}\label{eq:hammixp}
H^j_N(\bvs) = \sum_{p \geq 2} \vb_p(j) H_{N,p}(\bs(j) ).
\end{equation}
We assume that the inverse temperatures satisfy $\sum_{p \geq 2} 2^p \vb_p^{\, 2}(j) < \infty$ for all $j \leq n$, so that \eqref{eq:hammixp} is well-defined, and that $\vb_p = \vec{0}$ for odd $p$. 
The Hamiltonian of $n$ copies of these even mixed $p$-spin models of spherical spin glasses is denoted by
\begin{equation}\label{eq:hamiltonian}
H_N(\bvs) = \sum_{j \leq n} H^j_N(\bvs).
\end{equation}
The overlaps between the vector configurations $\bvs^\ell$ and $\bvs^{\ell'}$ are given by the overlap matrices
\begin{equation}
\bm{R}_{\ell, \ell'} = \bR(\bvs^\ell, \bvs^{\ell'}) = \frac{1}{N}\sum_{i \leq N} \vs_i^{\ell} \otimes \vs_i^{\ell'} \in \bS^n_+
\end{equation}
where $\otimes$ is the outer product on vectors in $\R^n$ and $\bS^n_+$ is the space of $n \times n$ positive semidefinite matrices.

The constraint $\bQ$ is a $n \times n$ symmetric positive definite matrix with off-diagonals $Q^{j,j'} \in [-1,1]$ and diagonals $Q^{j,j} = 1$. Given $\epsilon > 0$, we denote the set of spins with constrained self overlaps by
\begin{align}
Q^\epsilon_N &=  \big\{ \bvs \in S_N^n \mmm \| \bR(\bvs ,\bvs) - \bQ \|_\infty \leq \epsilon \big\},
\end{align} 
where $\| \cdot \|_\infty$ is the infinity norm on $n \times n$ matrices. For an external field $\vh \in \R^n$, we define the free energy as
\begin{equation}\label{eq:freeenergy}
F_N^\epsilon(\bQ) = \frac{1}{N} \E \log \int_{Q_N^\epsilon} \exp \Big( H_N(\bvs) + \sum_{j \leq n} \vh(j) \sum_{i \leq N} \vs_i(j) \Big) \, d \lambda_N^n(\bvs),
\end{equation}
where the reference measure $\lambda_N^n = \lambda_N^{\otimes n}$ is the product of normalized uniform measures $\lambda_N$ on $S_N$.

The limit of $\eqref{eq:freeenergy}$ can be expressed as a Parisi type functional. The Parisi functional is a Lipschitz function of discrete monotone matrix paths encoded by an increasing sequence of real numbers and monotone sequence of $n \times n$ symmetric positive semidefinite matrices,
\begin{equation}\label{eq:xqseq}
\begin{linsys}{8}
 0 & = & x_{0} &\leq &  x_{1} &\leq& \dots&\leq &x_{r - 2}  &\leq &x_{r - 1} &\leq& 1 \\
 \bm 0 &= &\bQ_0 & \leq & \bQ_1  &\leq& \dots&\leq &\bQ_{r - 2} &\leq &\bQ_{r-1} &\leq& \bQ_r &=& \bQ  
\end{linsys}~.
\end{equation}
To lighten notation, we will denote these sequences with $\vex = (x_k)_{k = 0}^{r - 1}$ and $\vbQ = (\bQ_k)_{k = 1}^r$.

For $\bA \in \bS_+^n$, we define the functions
\begin{equation}\label{eq:ximatrix}
\bxi(\bA) = \sum_{p \geq 2} (\vb_p \otimes \vb_p) \odot \bA^{\circ p},
\end{equation}
and
\begin{equation}
\bxi'(\bA) = \sum_{p \geq 2} p (\vb_p \otimes \vb_p) \odot  \bA^{\circ (p - 1)} \quad \text{ and } \quad  \btheta(\bA) = \sum_{p \geq 2} (p - 1) (\vb_p \otimes \vb_p) \odot  \bA^{\circ p},
\end{equation}
where $\odot$ is the Hadamard product on $n \times n$ matrices and $\bA^{\circ p}$ is the $p$th Hadamard power of $\bA$. Since $\vb_p = \vec{0}$ for odd $p$, $\bxi(\cdot)$ is an even convex function in each of its coordinates. The $r$ step discretization of the Parisi functional is defined by
\begin{align}
\sP_r(\bL,\vex, \vbQ) &= \frac{1}{2}\Big[ \langle \vh \vh^\trans, \bL_1^{-1} \rangle  + \langle \bL, \bQ\rangle - n - \log |\bL| +  \sum_{1 \leq k \leq r-1} \frac{1}{x_k} \log \frac{|\bL_{k + 1}|}{|\bL_k|} + \langle \bxi'(\bQ_1), \bL_1^{-1} \rangle \notag \\
&\quad  - \sum_{1 \leq k \leq r-1} x_k \cdot \Sum \big( \btheta (\bQ_{k + 1}) - \btheta (\bQ_{k})\big)\Big] \label{eq:parisi}
\end{align}
where $\langle \bA, \bB \rangle = \tr(\bA \bB )$ is the Frobenius inner product on symmetric matrices, $|\cdot|$ is the determinant and
\begin{equation}\label{def:L}
\bL_{r} = \bL, \quad \bL_{p} = \bL - \sum_{p \leq k \leq r - 1}x_{k} \big( \bxi'( \bQ_{k + 1} ) - \bxi'( \bQ_{k} ) \big) \text{ for $1 \leq p \leq r-1$}.
\end{equation}
The domain of the Parisi functional are all sequences \eqref{eq:xqseq} and Lagrange multipliers $\bL$ such that $|\bL_1| > 0$. This condition also implies that $|\bL_p| > 0$ for all $1 \leq p \leq r- 1$ so \eqref{eq:parisi} is well defined. It was proven in \cite{kosphere} that the limit of the free energy \eqref{eq:freeenergy} is given by minimizing \eqref{eq:parisi}. 
\begin{theo}\label{theo:MAINParisi}\cite[Theorem 2.1]{kosphere}
	The limit of the free energy with self overlaps constrained to $\bQ$ equals
	\begin{equation}
	\lim_{\epsilon \to 0}\lim_{N \to \infty} F^\epsilon_N(\bQ) = \inf_{r,\Lambda,x,Q} \sP_{r}(\bL,\vex, \vbQ).
	\end{equation}
	The infimum is over sequences of the form \eqref{eq:xqseq}, $\bL$ such that $|\bL_1| > 0$, and all $r \geq 1$.
\end{theo}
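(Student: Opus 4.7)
The theorem is cited from \cite{kosphere}, so my sketch is how I would re-derive it from the ingredients listed in the excerpt. The strategy is the classical two-step approach: prove a matching upper and lower bound on $\lim_{\epsilon\to0}\lim_{N\to\infty} F_N^\epsilon(\bQ)$ at the infimum of $\sP_r(\bL,\vex,\vbQ)$. Because the diagonal of $\bR(\bvs,\bvs)$ is constrained to be close to $\bQ$, the spherical integrals can be analyzed by the Laplace-type exact computations available on $S_N$, and the Lagrange multiplier $\bL$ is the parameter that decouples the quadratic diagonal constraint. The final matching is done discretely (not in the continuous CS form), which is the natural setting here before taking any continuum limit.

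\textbf{Upper bound.} I would set up a Guerra--Talagrand replica symmetry breaking interpolation in the spirit of \cite{guerra2003broken,PTSPHERE}, but adapted to $n$-vector spins on $S_N^n$. Fix a discrete sequence $(\vex,\vbQ)$ as in \eqref{eq:xqseq} and a Lagrange multiplier $\bL$, and build a Ruelle-type cascade $(v_\alpha)$ on $\N^{r-1}$ whose weights encode $\vex$ and independent matrix-valued Gaussian increments with covariances $\bxi'(\bQ_{k+1})-\bxi'(\bQ_k)$ on the $j$-th vector coordinate. Introduce the interpolation
\begin{equation*}
\varphi_N(t) = \frac{1}{N}\E \log \int_{Q_N^\epsilon} \sum_\alpha v_\alpha \exp\bigl(\sqrt{t}\,H_N(\bvs) + \sqrt{1-t}\,Z_\alpha(\bvs) + \text{external field}\bigr)\,d\lambda_N^n(\bvs),
\end{equation*}
where $Z_\alpha$ is the cavity field, and show $\varphi_N'(t)\le 0$ using Gaussian integration by parts together with the invariance of $\bxi$ under outer products of overlaps. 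The penalty term $\langle \bL,\bR(\bvs,\bvs)-\bQ\rangle$ is inserted to enforce the constraint, and the spherical Gaussian integral at $t=0$ is computed exactly by a Laplace-type formula on $S_N$, which is precisely what generates the terms $-\log|\bL|+\sum_k x_k^{-1}\log|\bL_{k+1}|/|\bL_k|+\langle\vh\vh^\trans,\bL_1^{-1}\rangle$ in \eqref{eq:parisi}. Optimizing out the penalty gives the upper bound $\limsup F_N^\epsilon(\bQ)\le \sP_r(\bL,\vex,\vbQ)$.

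\textbf{Lower bound.} This is the harder half. I would use the Aizenman--Sims--Starr scheme for spherical models \cite{AS2,CASS}, where $F_N^\epsilon(\bQ)-F_{N-1}^\epsilon(\bQ)$ is rewritten as the difference of a cavity and an on-site term along a single added coordinate, and then pass to the limit of the Gibbs measure via the Dovbysh--Sudakov representation. The key structural input is that any limiting array of generalized overlaps $(\bR_{\ell,\ell'})_{\ell,\ell'}$ satisfies the matrix Ghirlanda--Guerra identities, which by \cite{PUltra} force ultrametricity, and by the synchronization mechanism of \cite{panchenko2015free,PVS,PPotts} the matrix entries become monotone functions of the scalar ultrametric tree coordinate. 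This synchronization is what allows one to identify the limit with the discrete Parisi functional parametrized by a single sequence $\vex$ and matrices $\vbQ$, rather than arbitrary random matrix-valued overlaps. After this identification, the cavity computation produces exactly the terms in \eqref{eq:parisi}, and taking infimum over all admissible $(r,\bL,\vex,\vbQ)$ gives $\liminf F_N^\epsilon(\bQ)\ge \inf \sP_r$.

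\textbf{Main obstacle.} The main difficulty is the synchronization step in the lower bound. Unlike the scalar case, the Ghirlanda--Guerra identities give ultrametricity of $\tr(\bR_{\ell,\ell'})$ or of a scalar projection but not immediately of the full matrix overlaps; one has to argue that along the ultrametric tree the entire matrix $\bR_{\ell,\ell'}$ is a deterministic monotone function of the tree distance, so that the cavity fields decompose cleanly into a Gaussian cascade with covariance increments $\bxi'(\bQ_{k+1})-\bxi'(\bQ_k)$. A secondary, more technical obstacle is the control of the spherical Gaussian integral at each level of the cascade: because the constraint is matrix-valued, the Laplace asymptotics must be done jointly in all $n$ coordinates, which is exactly where the Lagrange multiplier $\bL$ and the recursive definition \eqref{def:L} of $\bL_p$ come from, and one must verify $|\bL_p|>0$ propagates through the recursion so the formula is well defined.
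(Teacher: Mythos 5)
The paper does not prove this theorem at all --- it is imported verbatim from \cite{kosphere} (with the upper bound going back to \cite{PTSPHERE} via Guerra's interpolation \cite{guerra2003broken}, and the lower bound via the Aizenman--Sims--Starr scheme \cite{AS2,CASS} together with the Ghirlanda--Guerra/ultrametricity/synchronization machinery \cite{PUltra,panchenko2015free,PVS,PPotts}), and your outline reproduces exactly that strategy, including the role of the Lagrange multiplier $\bL$ in handling the matrix self-overlap constraint and the synchronization step as the main obstacle in the lower bound. So, at the level of a sketch, this is essentially the same approach as the cited original proof, and there is nothing in the present paper to compare it against beyond that.
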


\subsection{Discrete Form of the Crisanti--Sommers Formula}\label{sec:discCS}

We will show that discrete Parisi formula has a discrete Crisanti--Sommers representation. The discrete form of the Crisanti--Sommers functional is derived by examining the critical points of the discrete Parisi functional \eqref{eq:parisi}. For $r \geq 1$ and the sequence of parameters $\vex$ and $\vbQ$ defined in \eqref{eq:xqseq}, the discrete Crisanti--Sommers representation is given by
\begin{align}
\sC_r(\vex, \vbQ) &= \frac{1}{2}\Big[ \langle\vh \vh^\trans, \bd_1 \rangle + \frac{1}{x_{r - 1}}\log| \bQ - \bQ_{r - 1}| - \sum_{1 \leq k \leq r-2} \frac{1}{x_k} \log \frac{|\bd_{k + 1}|}{|\bd_k|} + \langle \bQ_1, \bd_1^{-1} \rangle \notag \\
&\quad  + \sum_{1 \leq k \leq r-1} x_k \cdot \Sum \big( \bxi (\bQ_{k + 1}) - \bxi (\bQ_{k})\big)\Big]  \label{eq:crisanti},
\end{align}
where,
\begin{equation}\label{def:D}
\bd_p = \sum_{p \leq k \leq r - 1} x_{k} \big( \bQ_{k + 1} - \bQ_{k} \big) \text{  for $1 \leq p \leq r - 1$}.
\end{equation}
We require the additional constraint that $|\bQ - \bQ_{r - 1}| > 0$ otherwise $\sC_r(\vex, \vbQ)$ will be positive infinity. This condition implies that $|\bd_p| > 0$ for all $1 \leq p \leq r-1$ so $\sC_r(\vex, \vbQ)$ is well defined.

We will prove that the representations \eqref{eq:parisi} and \eqref{eq:crisanti} are equal at its minimizers.
\begin{theo}\label{thm:CSformula} For all positive definite constraints $\bQ$, we have
	\[
	\inf_{r,\Lambda,x,Q} \sP_r(\bL,\vex,\vbQ) = \inf_{r,x,Q} \sC_r(\vex,\vbQ),
	\]
	where the first infimum is over sequences \eqref{eq:xqseq} and $\bL \in \bS_+^n$ such such that $|\bL_1| > 0$ and the second infimum is over sequences \eqref{eq:xqseq} such that $|\bd_{r - 1}| > 0$.
\end{theo}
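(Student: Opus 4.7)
I will identify the two infima via a critical-point analysis of $\sP_r$, in the spirit of the classical derivation of the Crisanti--Sommers formula from the Parisi formula. For any joint minimizer $(\bL^*, \vex^*, \vbQ^*)$ of $\sP_r$, the stationarity in $\bL$ together with the joint stationarity conditions in $(\vex, \vbQ)$ force an explicit dependence $\bL^* = \bL^*(\vex^*, \vbQ^*)$. Substituting this relation into $\sP_r$ and applying the algebraic identity $\btheta(\bA) = \bA \odot \bxi'(\bA) - \bxi(\bA)$ together with repeated Abel summation rewrites $\sP_r(\bL^*, \vex^*, \vbQ^*)$ as $\sC_r(\vex^*, \vbQ^*)$. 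Since $(\vex^*, \vbQ^*)$ is then automatically a critical point of $\sC_r$, we obtain $\inf \sP_r = \sC_r(\vex^*, \vbQ^*) \ge \inf \sC_r$; the reverse inequality follows by running the argument in the opposite direction, constructing a compatible $\bL$ from an $\sC_r$-critical point. The degenerate case $|\bd_{r-1}| = 0$ reduces to the nondegenerate case by dropping the top layer: when $\bQ_{r-1} = \bQ$ the top-layer contributions to $\sP_r$ vanish, collapsing $\sP_r$ onto $\sP_{r-1}$ on a shorter sequence, and iterating returns us to the nondegenerate regime.

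\textbf{Key computation.} Using the linear dependence $\bL_p = \bL - \sum_{k \ge p} x_k(\bxi'(\bQ_{k+1}) - \bxi'(\bQ_k))$, the equation $\partial_\bL \sP_r = 0$ reads
\begin{equation*}
\bQ - \bL^{-1} + \sum_{k=1}^{r-1}\frac{1}{x_k}\bigl(\bL_{k+1}^{-1} - \bL_k^{-1}\bigr) = \bL_1^{-1}\bigl(\vh\vh^\trans + \bxi'(\bQ_1)\bigr)\bL_1^{-1}.
\end{equation*}
Abel summation rewrites the middle sum as $(1/x_{r-1})\bL^{-1} - (1/x_1)\bL_1^{-1} + \sum_{k=2}^{r-1}(1/x_{k-1} - 1/x_k)\bL_k^{-1}$, which combined with the layer recurrence $\bL_{p+1}-\bL_p = x_p(\bxi'(\bQ_{p+1})-\bxi'(\bQ_p))$ and its parallel $\bd_p - \bd_{p+1} = x_p(\bQ_{p+1}-\bQ_p)$ pins down $\bL^*_p$ in terms of $(\vex, \vbQ)$. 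Three algebraic reductions then identify $\sP_r(\bL^*, \vex, \vbQ)$ with $\sC_r(\vex, \vbQ)$ at the joint critical point: (i) the identity $\btheta(\bA) = \bA \odot \bxi'(\bA) - \bxi(\bA)$ converts $-\sum x_k \Sum(\btheta(\bQ_{k+1}) - \btheta(\bQ_k))$ into $+\sum x_k \Sum(\bxi(\bQ_{k+1}) - \bxi(\bQ_k))$ plus $\bxi'$--residuals; (ii) Abel summation combines these residuals with $\langle \bL, \bQ\rangle$ and the stationarity condition so that all $\bxi'$--contributions cancel, leaving exactly $\langle \vh\vh^\trans, \bd_1\rangle + \langle \bQ_1, \bd_1^{-1}\rangle$; (iii) Abel summation rearranges the log--determinant sum as a weighted combination of $\log|\bL^*_p|$ which, at the critical point, matches the $\log|\bd_p|$--terms of $\sC_r$.

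\textbf{Main obstacle.} The most delicate step is the matrix-valued determinantal matching: verifying that $\log|\bL^*_p|$ pairs correctly with $-\log|\bd_p|$ requires a nontrivial matrix identity between the two recurrences, despite $\bL^*_p$ being driven by $\bxi'$--differences and $\bd_p$ by $\bQ$--differences. I expect to establish this by downward induction on $p$, using the joint stationarity in $\bL$ and $\vbQ$ to propagate the identification from layer $p+1$ to layer $p$; the scalar-case analogue is the identity $L^*_p D_p = 1$, whose matrix generalization must handle the non-commutativity of $\bL^*_p$ and $\bd_p$. A secondary difficulty, flagged in the introduction, is that at the global minimizer some increments $\bQ_{k+1}-\bQ_k$ may be rank-deficient, so $\vbQ$--variations yield only KKT--type inequalities rather than strict equalities, weakening the critical-point propagation used in step (iii). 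This is where the barrier-function strategy becomes essential: one adds a penalty on degenerate increments to both $\sP_r$ and $\sC_r$, carries out the identification in the interior for the regularized functionals, and removes the barrier by a limit argument using continuity of the infima and compactness of minimizing sequences.
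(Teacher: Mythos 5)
Your overall skeleton --- a critical-point reduction of $\sP_r$ to $\sC_r$ through the identities $\bL_{k+1}-\bL_k = x_k(\bxi'(\bQ_{k+1})-\bxi'(\bQ_k))$ and $\bd_k-\bd_{k+1} = x_k(\bQ_{k+1}-\bQ_k)$, with the matching $\bL_p^{-1}=\bd_p$ (your matrix version of $L^*_pD_p=1$), plus a logarithmic barrier to force interior minimizers --- is exactly the paper's strategy, and your flagged obstacle (rank-deficient increments giving only KKT-type inequalities) is the right one. The genuine gap is in how you propose to remove the barrier. Adding $\epsilon\sE_r$ does not let you "carry out the identification for the regularized functionals": the perturbed critical-point equations read $\bL_p^{-1}=\bd_p+\epsilon\bE_p$, where $\bE_p$ is built from $\be_k = \frac{1}{x_k-x_{k-1}}\bigl((\bQ_{k+1}-\bQ_k)^{-1}-(\bQ_k-\bQ_{k-1})^{-1}\bigr)\oslash\bxi''(\bQ_k)$ evaluated at the $\epsilon$-minimizer, so the reduction yields not $\inf(\sC_r+\epsilon\sE_r)$ but a Crisanti--Sommers-type expression carrying error terms of size $\epsilon\be_k$. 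Since the increments of the $\epsilon$-minimizer may degenerate as $\epsilon\to 0$, there is no reason that $\epsilon\be_k\to\bm 0$, and "continuity of the infima and compactness of minimizing sequences" does not close the argument: limit points of the perturbed minimizers can sit on the boundary of the cone, which is precisely the regime where your stationarity propagation breaks down.

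The paper resolves this with an absorption step your plan lacks, and it is the crux of the proof. For $\inf\sP_r\ge\inf\sC_r$, convexity of $\bxi$ shows the perturbed expression dominates $\sC_r$ evaluated at the shifted path $\tilde\bQ_p=\bQ_p+\epsilon\be_p$, which the critical-point equations (together with $|\bxi'(\bQ_{k+1})-\bxi'(\bQ_k)|>0$ when the increments are nondegenerate) show is itself an admissible monotone path; one then takes $\epsilon\to 0$ only at the level of $\inf\sP_r^\epsilon\to\inf\sP_r$. For the reverse inequality the convexity trick fails, and the paper instead uses concavity of $\log\det$ to absorb the errors into a shifted Lagrange multiplier $\tilde\bL=\bL+\epsilon\bE_1$ and bound below by $\sP_r(\tilde\bL,\vex,\vbQ)$; your "run the argument in the opposite direction" needs this asymmetric device spelled out. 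Two smaller points: stationarity in $\vex$ is never used (the paper fixes a strictly increasing $\vex$ with $x_{r-1}=1$ and varies only $\bL,\vbQ$), and the Hadamard division defining $\be_p$ requires the auxiliary assumption $\vb_2>0$, removed afterwards by uniform continuity of both infima in the temperature parameters; also note $|\bd_{r-1}|=0$ means $\bQ-\bQ_{r-1}$ is singular, not zero, so your "drop the top layer" reduction does not cover that case (though it is excluded from the stated domain anyway).
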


\begin{rem}
	Reassuringly, in the one dimensional case, these formulas agree with the usual discretizations of the Parisi functional and Crisanti--Sommers functional (See \cite[Section~4]{TSPHERE}).
\end{rem}

\subsection{The Integral Form of the Crisanti--Sommers Representation of the Parisi Formula}

The main goal of this paper is to prove that the free energy can be obtained by minimizing a functional closely resembling the Cristanti--Sommers functional for one dimensional spherical spin glasses. The parameters of the functional is the c.d.f. of the trace of the overlap matrix, and the synchronized matrix path identifying the overlap matrix with its trace \cite[Theorem 4]{PVS}. Let 
\begin{equation}\label{eq:csx}
x(t) : [0,n] \to [0,1] \quad \text{such that} \quad x(0) = 0 \quad \text{and} \quad x(n) = 1
\end{equation}
denote a right continuous non-decreasing function and 
\begin{equation}\label{eq:csPhi}
\Phi(t): [0,n] \to \bS^n_+ \quad \text{such that} \quad \tr( \Phi(t)) = t \quad  \text{and} \quad \Phi(0) = \bm 0 \quad \text{and} \quad \Phi(n) = \bQ
\end{equation}
denote a 1-Lipschitz monotone matrix path in the space of $n \times n$ positive semidefinite matrices parametrized by its trace. A monotone matrix path is one with positive semidefinite increments, $\Phi(t_2) - \Phi(t_1) \in \bS_+^n$ for $t_2 \geq t_1$. Since $\Phi$ is 1-Lipschitz in each of its coordinates its coordinate wise derivative $\Phi'$ exists almost everywhere and is bounded by $1$ almost everywhere.

The largest point in the support of the measure associated with the c.d.f. $x(t)$ is denoted by
\[
t_x := x^{-1}(1) = \inf \{ t \in [0,n] \mmm 1 \leq x(t) \}.
\]
Assuming that $|\bQ - \Phi(t_x)| > 0$, we define the quantity
\begin{equation}\label{eq:cs}
\sC(x,\Phi) = \frac{1}{2} \bigg( \int_0^n x(t) \langle \bxi'(\Phi(t)) + \vh \vh^\trans, \Phi'(t) \rangle \, dt + \log| \Phi(n) - \Phi(t_x)| + \int_0^{t_x} \langle \hat \Phi(t)^{-1},  \Phi'(t) \rangle \, dt \bigg),
\end{equation}
where $\hat \Phi(t): [0,n] \to \R^{n \times n}$ is a decreasing matrix path given by
\begin{equation}
\hat \Phi(t) = \int_t^n x(s) \Phi'(s) \, ds.
\end{equation}
Because $\hat \Phi(t) = \bQ - \Phi(t)$ for $t \geq t_x$, the functional does not depend on $t_x$. More precisely, if $\hat t \geq t_x$ and $|\bQ - \Phi(\hat t)| > 0$, then
\begin{align*}
\int_0^{\hat t} \langle \hat \Phi(t)^{-1},  \Phi'(t) \rangle \, dt %&= \int_0^{t_x} \langle \hat \Phi(t)^{-1},  \Phi'(t) \rangle \, dt + \int_{t_x}^{\hat t} \langle \hat \Phi(t)^{-1},  \Phi'(t) \rangle \, dt
&= \int_0^{t_x} \langle \hat \Phi(t)^{-1},  \Phi'(t) \rangle \, dt + \int_{t_x}^{\hat t} \langle (\bQ - \Phi(t))^{-1},  \Phi'(t) \rangle \, dt
\\&= \int_0^{t_x} \langle \hat \Phi(t)^{-1},  \Phi'(t) \rangle \, dt - \log| \Phi(n) - \Phi(\hat t)| + \log| \Phi(n) - \Phi(t_x)|
\end{align*}
which implies 
\begin{equation}\label{eq:notdependsup}
\log| \Phi(n) - \Phi(\hat t)| + \int_0^{\hat t} \langle \hat \Phi(t)^{-1},  \Phi'(t) \rangle \, dt = \log| \Phi(n) - \Phi(t_x)| + \int_0^{t_x} \langle \hat \Phi(t)^{-1}, \Phi'(t) \rangle \, dt.
\end{equation}
Our main result will be that the limit of the free energy \eqref{eq:freeenergy} is given by minimizing \eqref{eq:cs}. 
\begin{theo}\label{theo:MAINCS} 
	The limit of the free energy with self overlaps constrained to $\bQ$ is
	\begin{equation}
	\lim_{\epsilon \to 0}\lim_{N \to \infty} F^\epsilon_N(\bQ) = \inf_{x,\Phi} \sC(x,\Phi).
	\end{equation}
	The infimum is over $x(t)$ and $\Phi(t)$ defined in \eqref{eq:csx} and \eqref{eq:csPhi} such that $|\bQ - \Phi(t_x)| > 0$. 
\end{theo}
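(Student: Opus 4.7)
The plan is to combine Theorems~\ref{theo:MAINParisi} and~\ref{thm:CSformula}, which together give $\lim_{\epsilon\to 0}\lim_{N\to\infty} F_N^\epsilon(\bQ) = \inf_{r,\vex,\vbQ} \sC_r(\vex,\vbQ)$, and then pass from the discrete Crisanti--Sommers functional $\sC_r$ to its continuous counterpart $\sC$ by showing $\inf_{r,\vex,\vbQ} \sC_r = \inf_{x,\Phi} \sC$.

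For the bound $\inf \sC \leq \inf \sC_r$, given a discrete admissible tuple $(\vex,\vbQ)$ with $x_{r-1} = 1$, I would construct an explicit continuous pair $(x,\Phi)$ realizing the same value. Set $t_k = \tr(\bQ_k) \in [0,n]$, let $\Phi$ be the piecewise linear interpolation of the $\bQ_k$ on $[t_k, t_{k+1}]$, and let $x$ be the right-continuous step function with $x \equiv x_k$ on $[t_k, t_{k+1})$. Then $\Phi$ is $1$-Lipschitz monotone with $\tr(\Phi(t)) = t$, and $t_x = t_{r-1}$ satisfies $|\bQ - \Phi(t_x)| > 0$. Each term of \eqref{eq:cs} then collapses to its counterpart in \eqref{eq:crisanti}: the chain rule $\frac{d}{dt}\Sum \bxi(\Phi(t)) = \langle \bxi'(\Phi), \Phi'\rangle$ gives
\[
\int_0^n x(t)\langle\bxi'(\Phi) + \vh\vh^\trans, \Phi'\rangle \, dt = \sum_{k=1}^{r-1} x_k \Sum(\bxi(\bQ_{k+1}) - \bxi(\bQ_k)) + \langle \vh\vh^\trans, \bd_1\rangle;
\]
on $[t_k, t_{k+1})$ one checks that $\hat\Phi(t) = x_k(\bQ_{k+1} - \Phi(t)) + \bd_{k+1}$, so that $\hat\Phi'(t) = -x_k \Phi'(t)$ and $\langle \hat\Phi^{-1}, \Phi'\rangle = -\tfrac{1}{x_k}\tfrac{d}{dt}\log|\hat\Phi|$, which telescopes into $-\sum_{k=1}^{r-2}\tfrac{1}{x_k}\log\tfrac{|\bd_{k+1}|}{|\bd_k|}$; and on the initial piece $[0, t_1)$ where $x_0 = 0$, $\hat\Phi$ is constant equal to $\bd_1$, contributing $\langle \bQ_1, \bd_1^{-1}\rangle$. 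Combined with $\log|\Phi(n) - \Phi(t_x)| = \log|\bQ - \bQ_{r-1}|$, this yields $\sC(x,\Phi) = \sC_r(\vex,\vbQ)$ term by term.

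For the reverse bound $\inf \sC_r \leq \inf \sC$, I would discretize any continuous admissible $(x,\Phi)$ by choosing partitions $0 = t_0 < t_1 < \cdots < t_{r-1} = t_x < t_r = n$ with mesh tending to zero, and setting $\bQ_k = \Phi(t_k)$ and $x_k = x(t_k^-)$ (with $x_{r-1} = 1$). Uniform continuity of $\bxi'(\Phi)$ on $[0,n]$ and uniform boundedness of $\hat\Phi^{-1}$ on $[0, t_x]$, the latter guaranteed by $|\bQ - \Phi(t_x)| > 0$, then imply $\sC_r(\vex,\vbQ) \to \sC(x,\Phi)$ by standard Riemann-sum arguments applied to each term.

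The principal obstacle is the discrete case $x_{r-1} < 1$, since the natural step-function correspondence forces $t_x = n$ and $|\bQ - \Phi(t_x)| = 0$, violating the admissibility hypothesis of Theorem~\ref{theo:MAINCS}. I would resolve this by showing that the discrete infimum over all admissible tuples agrees with its restriction to those with $x_{r-1} = 1$: given $(\vex, \vbQ)$ with $x_{r-1} < 1$, one can construct a comparison tuple with $x'_{r-1} = 1$ either by absorbing the endpoint atom into a perturbation of $\bQ_{r-1}$ or by inserting an additional step whose net contribution vanishes in an appropriate limit, exploiting that the Parisi-side analysis used in Theorem~\ref{thm:CSformula} permits minimizers with $x_{r-1} = 1$. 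A secondary technical point is justifying Jacobi's formula $\frac{d}{dt}\log|\hat\Phi(t)| = \langle \hat\Phi^{-1}, \hat\Phi'\rangle$ on each interval, which holds because $\bd_k = \bd_{r-1} + \sum_{k \leq j \leq r-2} x_j(\bQ_{j+1} - \bQ_j)$ is positive definite for every $k \leq r-1$ whenever $|\bd_{r-1}| > 0$, so $\hat\Phi(t)$ is a convex combination of the positive definite matrices $\bd_k$ and $\bd_{k+1}$ throughout each interval.
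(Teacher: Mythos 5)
Your overall route coincides with the paper's: combine Theorems~\ref{theo:MAINParisi} and~\ref{thm:CSformula} and then prove $\inf_{r,x,Q}\sC_r=\inf_{x,\Phi}\sC$, and your discrete-to-continuous step (piecewise linear $\Phi$ with $t_k=\tr(\bQ_k)$, step c.d.f., the identity $\hat\Phi(t)=\bd_{k+1}+x_k(\bQ_{k+1}-\Phi(t))$ and the telescoping of $-\tfrac1{x_k}\tfrac{d}{dt}\log|\hat\Phi|$) is exactly the paper's Lemma~\ref{lem:discreization}. Where you genuinely diverge is the continuous-to-discrete direction: the paper proves that $\sC$ is Lipschitz on the compact sets $A_{T,L}$ (Lemma~\ref{lem:lip}) and then localizes the minimizer in an explicit $A_{\hat T,\hat L}$ via a perturbation of the c.d.f. (Lemma~\ref{lem:supportmin}, Proposition~\ref{prop:minimumcompact}), whereas you discretize each fixed admissible $(x,\Phi)$ and pass to the limit by Riemann-sum estimates. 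Your route is viable and more elementary: for a fixed admissible pair one has $t_x<n$, and if $t_x$ is kept as a partition point then $\hat\Phi^{(r)}(t)\succeq \bQ-\Phi(t_x)\succ0$ for $t\le t_x$, so the inverses are uniformly controlled; the only care needed is that $\Phi^{(r)\prime}\to\Phi'$ only weakly, so the convergence of $\int_0^{t_x}\langle\hat\Phi^{-1},\Phi'\rangle$ should be run through an integration by parts (or the same entrywise estimates as in Lemma~\ref{lem:lip}). What you lose relative to the paper is Proposition~\ref{prop:minimumcompact} (attainment of the minimum on an explicit compact set), which the paper's machinery yields as a by-product but which is not needed for the theorem itself.

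The one place where your plan is not yet a proof is the reduction to $x_{r-1}=1$ on the $\sC_r$ side. Of the fixes you float, the ``insert an additional step whose net contribution vanishes'' option fails: splitting the top increment at $\bQ-\delta(\bQ-\bQ_{r-1})$ with new top weight $1$ changes the log terms by $n\bigl(1-\tfrac{1}{x_{r-1}}\bigr)\log\delta$ plus bounded quantities, which diverges to $+\infty$ as $\delta\to0$ when $x_{r-1}<1$; unlike $\sP_r$, the functional $\sC_r$ is not continuous under this degeneration, so a naive closure argument does not transfer from the Parisi side. The viable fix is the third one you mention, made precise through the proofs of Sections~2--3: the proof of Lemma~\ref{lem:lwbd} bounds $\inf_{\Lambda,Q}\sP_r$ (for strictly increasing $\vex$ with $x_{r-1}=1$) below by $\sC_r$ evaluated at the sequence \eqref{eq:xqseqtildelwbd}, which has top weight $1$; since such $\vex$ are dense for $\sP_r$ and $\inf\sP_r=\inf\sC_r$ by Theorem~\ref{thm:CSformula}, one gets $\inf\sC_r=\inf_{\{x_{r-1}=1\}}\sC_r$, and your correspondence then applies. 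With that step spelled out, your argument closes.
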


\begin{rem}
	When $n = 1$, \eqref{eq:cs} is identical to the usual 1-dimensional Crisanti--Sommers formula. This is because the only trace parametrization of a one dimensional monotone path is $\Phi(t) = t$. 
\end{rem}

The Crisanti--Sommers form of the functional has some properties that makes it easier to analyze over the Parisi form. First of all, the Lagrange multiplier that appears in \eqref{eq:parisi} is absent in \eqref{eq:cs} and the fixed parameters of the model, $(\vb_p)_{p \geq 1}$ and $\vh$ only appear in the first two terms of \eqref{eq:cs}. We will also prove that $\sC(x,\Phi)$ is a locally Lipschitz (See Lemma~\ref{lem:lip}) with respect to the norm
\[
\| x_1 + x_2 \|_1 + \| \Phi_1 - \Phi_2 \|_\infty := \int_0^n |x_1(t) - x_2(t) | \, dt + \max_{i,j \leq n} \bigg( \sup_{t \in [0,n]} | \Phi_1^{i,j}(t) - \Phi_2^{i,j}(t) | \bigg).
\]
The space of parameters is compact under these norms as a consequence of Prokhorov's theorem and the Arzel\`a--Ascoli theorem. We will use this observation to show that the functional attains its minimum in the interior of its domain, which will allow us to use variational methods to study the minimizers of $\sC(x,\Phi)$. 

\begin{proposition}\label{prop:minimumcompact}
	The Crisanti--Sommers functional attains its minimum on the compact set 
	\begin{equation}
	A_{T,L} = \{ (x,\Phi) \mmm x(t) = 1 \text{ for } t \geq T \text{ and } \| (\bQ - \Phi(t_x) )^{-1} \|_\infty \leq L \}
	\end{equation}
	where the constants
	\begin{equation}\label{eq:cosntantsbound}
	T = n - \frac{1}{\sqrt{n}} e^{-(\langle \vh \vh^\trans + \bxi'(\bQ), \bQ \rangle + n - \log| \bQ| ) }  \quad \text{and} \quad L = \sqrt{n} e^{\langle \vh \vh^\trans + \bxi'(\bQ), \bQ \rangle + n - \log| \bQ| }
	\end{equation}
	only depend on the fixed parameters of the model. 
\end{proposition}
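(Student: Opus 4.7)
My plan is to combine an explicit upper bound on $\inf \sC$ with a matching lower bound outside $A_{T,L}$, and then apply a standard compactness argument on $A_{T,L}$. First I would produce an upper bound for $\inf \sC$ by direct evaluation at a convenient reference pair, e.g.\ the linear path $\Phi_0(t) = \tfrac{t}{n}\bQ$ together with $x_0 = \1_{[T_0,n]}$ for some $T_0 \in (0,n)$. For this pair $\hat\Phi_0(t) = \tfrac{n-T_0}{n}\bQ$ on $[0,T_0]$ and every ingredient of $\sC$ is explicit: the first integral is bounded by $\tfrac{n-T_0}{n}\langle \bxi'(\bQ)+\vh\vh^\trans,\bQ\rangle$, since $p$ even together with the Schur product theorem forces $\langle\bxi'(u\bQ),\bQ\rangle$ to be a polynomial in $u$ with nonnegative coefficients; the determinant term equals $n\log\tfrac{n-T_0}{n}+\log|\bQ|$; and the last integral equals $\tfrac{T_0 n}{n-T_0}$. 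Choosing $T_0$ close to $T$ produces an upper bound $C_\star$ depending only on the fixed parameters.

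Second, I would derive a matching lower bound. Integrating $\tfrac{d}{dt}\log|\hat\Phi(t)|=-x(t)\langle\hat\Phi(t)^{-1},\Phi'(t)\rangle$ over $[0,t_x]$ gives the identity
\[
\log|\bQ-\Phi(t_x)|+\int_0^{t_x}\langle\hat\Phi(t)^{-1},\Phi'(t)\rangle\,dt \;=\; \log|\hat\Phi(0)|+\int_0^{t_x}(1-x(t))\langle\hat\Phi(t)^{-1},\Phi'(t)\rangle\,dt,
\]
after which the only potentially negative contribution to $\sC$ is $\log|\hat\Phi(0)|\leq\log|\bQ|$. Using the trace-Lipschitz property $\tr \Phi'(t)=1$ almost everywhere, the Loewner monotonicity $\hat\Phi(t)\geq\bQ-\Phi(t_x)$ on $[0,t_x]$, and the nonnegativity of the first integral in $\sC$, one obtains a lower bound on $\sC(x,\Phi)$ depending only on $t_x$ and $\lambda_{\min}(\bQ-\Phi(t_x))$. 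The explicit constants $T$ and $L$ are the thresholds at which this lower bound exceeds $C_\star$: outside $A_{T,L}$, $\sC(x,\Phi) > C_\star \geq \inf \sC$, so any minimizer must lie in $A_{T,L}$.

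Existence of a minimizer on $A_{T,L}$ then follows from standard compactness: CDFs $x$ with $x\equiv 1$ on $[T,n]$ are tight and sequentially compact under Helly selection (equivalently in $L^1([0,n])$); the monotone matrix paths $\Phi$ are $1$-Lipschitz with fixed endpoints $\Phi(0)=\bm 0$, $\Phi(n)=\bQ$ and hence form an Arzel\`a--Ascoli-compact family in the sup norm; the constraint $\|(\bQ-\Phi(t_x))^{-1}\|_\infty\leq L$ is closed under these convergences (using upper semicontinuity of $x\mapsto t_x$ and continuity of $\Phi$); and $\sC$ is continuous on $A_{T,L}$ by the forthcoming local Lipschitz property (Lemma~\ref{lem:lip}).

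The delicate step is the lower bound. The tension is between $\log|\bQ-\Phi(t_x)|$, which can be arbitrarily negative as $\bQ-\Phi(t_x)$ degenerates, and the positive $\int_0^{t_x}\langle\hat\Phi(t)^{-1},\Phi'(t)\rangle\,dt$, which grows as $t_x\to n$; this is essentially a matrix version of the elementary inequality $-\log u+1/u\geq 1$. To recover the sharp constants $T=n-\tfrac{1}{\sqrt n}e^{-M}$ and $L=\sqrt n\,e^M$ in the statement, one must exploit the trace constraint $\tr\Phi'=1$ rather than crude operator-norm bounds, so that factors of $n$ and the exponent $M$ appear in the right places.
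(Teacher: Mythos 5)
Your step (2) is where the argument breaks down: the Crisanti--Sommers functional is \emph{not} coercive in $t_x$ or in $\|(\bQ-\Phi(t_x))^{-1}\|_\infty$, so no lower bound of the kind you describe (one that exceeds an a priori upper bound $C_\star$ outside $A_{T,L}$) can exist. Your own inequalities show why: from $\hat\Phi(t)\leq \bQ-\Phi(t)$ one gets $\int_0^{t_x}\langle\hat\Phi(t)^{-1},\Phi'(t)\rangle\,dt\geq \log|\bQ|-\log|\bQ-\Phi(t_x)|$, and the two singular terms cancel exactly, leaving only the $t_x$-independent bound $\sC\geq\tfrac12\log|\bQ|$; equivalently, your identity leaves $\log|\hat\Phi(0)|$, which neither blows up as $t_x\to n$ nor is controlled by $\lambda_{\min}(\bQ-\Phi(t_x))$. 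Worse, the sublevel sets of $\sC$ genuinely fail to be contained in any $A_{T,L}$: take a near-minimizer $(x^*,\Phi^*)$ and replace $x^*$ by $\min(x^*,1-\delta)$ on $[0,n-\eta)$ and by $1$ on $[n-\eta,n]$. Then $t_x=n-\eta$ and $\|(\bQ-\Phi^*(t_x))^{-1}\|_\infty\to\infty$, while the extra contribution to $\sC$ is of order $\delta\log(1/\eta)$ plus $O(\delta+\eta)$, so choosing $\delta=\eta$ makes $\sC$ converge to $\sC(x^*,\Phi^*)$, i.e.\ arbitrarily close to $\inf\sC$ and in particular below any fixed $C_\star>\inf\sC$ obtained from a linear test path. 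So ``outside $A_{T,L}$ we have $\sC>C_\star$'' is false for every choice of $T<n$ and $L<\infty$, and the constants in the statement cannot be produced by an upper-bound-versus-lower-bound comparison.

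The correct mechanism, and the one the paper uses, is a necessary condition at exact minimizers rather than a bound on sublevel sets. For arbitrary $T',L'$ one first gets a minimizer on $A_{T',L'}$ by compactness (Prokhorov, Arzel\`a--Ascoli) together with the Lipschitz property of Lemma~\ref{lem:lip} --- this part of your plan matches the paper. Then one perturbs the c.d.f.\ of that minimizer, $x\mapsto x+\epsilon(y-x)$ for an arbitrary admissible c.d.f.\ $y$ supported up to $\hat T$, computes the one-sided derivative, and deduces (Lemma~\ref{lem:supportmin}) that the minimizing measure $\mu$ must be supported where $s\mapsto\int_0^s\langle\Psi(t),\Phi'(t)\rangle\,dt$ is maximal, with $\Psi(t)=\vh\vh^\trans+\bxi'(\Phi(t))-\int_0^t\hat\Phi(s)^{-1}\Phi'(s)\hat\Phi(s)^{-1}\,ds$; bounding this integral by $\langle\vh\vh^\trans+\bxi'(\bQ)+\bQ^{-1},\bQ\rangle-\log|\bQ|+\log|\bQ-\Phi(s)|$ forces $\log|\bQ-\Phi(t)|\geq -(\langle\vh\vh^\trans+\bxi'(\bQ),\bQ\rangle+n-\log|\bQ|)$ on the support, and the trace parametrization then yields exactly the constants $T$ and $L$ of the proposition. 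Your steps (1) and (3) are fine but cannot substitute for this first-variation argument.
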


\subsection{Outline of the Paper}\label{subsec:outline}

Following the methodology of the proof in the one dimensional case \cite[Section 4]{TSPHERE}, we will prove that both $\sC_r$ and $\sP_r$ take the same values at the critical points. The minimizers of $\sC_r$ and $\sP_r$ will satisfy the same critical point equations that will allow us to reduce $\sP_r$ to $\sC_r$ and vice versa.

The main difficulty is the minimizer of the Parisi functional in vector spin models may not be an interior point of the domain. In the one dimensional case, we could assume that our the discretization of the paths are strictly monotone, i.e. $q_1 < q_2 < \dots < q_r$. This allowed us to differentiate with respect to $q_k$ to recover the critical point conditions immediately. In the vector spin case, the increments $\bQ_{p} - \bQ_{p- 1}$ may occur on the boundary of the positive definite cone so the directional derivatives are not necessarily equal to $0$ at the critical points. This is our main obstacle, because the system of equations in the critical point conditions becomes a system of inequalities unless we can show that the increments $\bQ_{p} - \bQ_{p- 1}$ are positive definite. 

To fix this, we will introduce a positive definite barrier to the discrete functionals that impose a large penalty if the increments of the matrix path $\vbQ$ are degenerate. This will force the functionals to take a minimum at an interior point, allowing us to use variational calculus to find approximate critical point conditions. This approach will allow us to reduce the Parisi functional into an approximate Crisanti--Sommers form and vice versa. We will use convexity to show that the approximations become exact as the size of the positive definite barrier tends to $0$. 

In Section~\ref{sec:lwbd}, we will show that 
\[
\inf_{r,\Lambda,x,Q} \sP_r (\bL,\vex, \vbQ) \geq \inf_{r,x,Q} \sC_r(\vex, \vbQ),
\]
by using a barrier function to derive approximate critical point conditions for the Parisi functional. We will use the critical point conditions to reduce the Parisi functional into its approximate Crisanti--Sommers form, and we will use convexity to show that the approximate discrete Crisanti--Sommers functional is lower bounded by the usual Crisanti--Sommers functional evaluated at a different point. 

We can use a similar argument with a barrier term to prove the upper bound in Section~\ref{sec:upbd}, 
\[
\inf_{r,\Lambda,x,Q} \sP_r (\bL,\vex, Q) \leq \inf_{r,x,Q} \sC_r(\vex, \vbQ).
\]
This direction of the argument uses the critical point conditions for the Crisanti--Sommers functional. Miraculously, the minimizers of $\sC_r$ satisfies almost exactly the same critical point conditions as the minimizers of $\sP_r$ which allows us to reduce $\sC_r$ to $\sP_r$ in the opposite direction.

In Section~\ref{sec:intform} we will prove that \eqref{eq:cs} is locally Lipschitz to conclude that it is the correct extension of the discrete Crisanti--Sommers formula. Several elementary facts about symmetric matrices and the calculus of matrix valued functionals are included in Appendix~\ref{app:properties}.

\begin{rem}
	If we can show that the minimizers of $\sP_r$ and $\sC_r$ have positive definite increments, then the equality of $\sP_r$ and $\sC_r$ at its critical points can be proved using the same proof as the one-dimensional case without adding the positive definite barrier.
\end{rem}

\section{The Lower Bound of the Parisi Functional}\label{sec:lwbd}

In this section, we will prove that the infimum of the Crisanti--Sommers functional is a lower bound of the Parisi functional:
\begin{lem}\label{lem:lwbd} For any positive definite constraint $\bQ$, we have
	\[
	\inf_{r,\Lambda,x,Q} \sP_r (\bL,\vex, \vbQ) \geq \inf_{r,x,Q} \sC_r(\vex, \vbQ),
	\]
		where the first infimum is over sequences \eqref{eq:xqseq} and $\bL \in \bS_+^n$ such such that $|\bL_1| > 0$ and the second infimum is over sequences \eqref{eq:xqseq} such that $|\bd_{r - 1}| > 0$.
\end{lem}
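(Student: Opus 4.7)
The plan follows the outline in Section~\ref{subsec:outline}: regularize $\sP_r$ with a log-determinant barrier that penalizes degenerate increments of $\vbQ$, extract critical point equalities at the resulting interior minimizer via matrix calculus, algebraically reduce the Parisi expression to a Crisanti--Sommers-like form, and finally remove the regularization using a convexity argument.

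For $\delta > 0$, I would define the regularized functional
\[
\sP_r^\delta(\bL, \vex, \vbQ) = \sP_r(\bL, \vex, \vbQ) - \delta \sum_{k=0}^{r-1} \log|\bQ_{k+1} - \bQ_k|
\]
on the subdomain where every increment $\bQ_{k+1} - \bQ_k$ is strictly positive definite and $|\bL_1| > 0$. The barrier diverges as any increment approaches the boundary of $\bS_+^n$, and combined with coercivity estimates in the spirit of Proposition~\ref{prop:minimumcompact}, the infimum of $\sP_r^\delta$ should be attained at an interior point $(\bL^\delta, \vex^\delta, \vbQ^\delta)$.

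At this interior minimizer, the matrix gradients with respect to $\bL$ and each $\bQ_k$ (viewed as free symmetric matrices) must vanish. Using the standard identities $\partial_\bA \log|\bA| = \bA^{-1}$ and $\partial_\bA \langle \bA^{-1}, \bB \rangle = -\bA^{-1} \bB \bA^{-1}$, together with the chain rule through the definition \eqref{def:L} of $\bL_p$, the Euler--Lagrange equations should produce identities pairing $\bL_k^{-1}$ with $\bd_k$ up to a correction of order $\delta$ coming from the barrier derivative $\delta(\bQ_{k+1}-\bQ_k)^{-1} - \delta(\bQ_k - \bQ_{k-1})^{-1}$, together with a boundary relation from the variation in $\bL$ that ties $\bL$ to the terminal block of $\bd$.

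Substituting these relations back into $\sP_r^\delta(\bL^\delta, \vex^\delta, \vbQ^\delta)$ and exploiting summation by parts on $\sum_k x_k \Sum(\btheta(\bQ_{k+1}) - \btheta(\bQ_k))$, combined with the algebraic identity linking $\btheta$, $\bxi'$, and $\bxi$ via their Hadamard-power definitions, should recover $\sC_r(\vex^\delta, \vbQ^\delta)$ modulo an error of order $\delta$. A convexity argument, perturbing $\vbQ^\delta$ slightly if needed to preserve $|\bQ - \bQ_{r-1}| > 0$, would then supply a point $(\vex', \vbQ')$ in the domain of $\sC_r$ with $\sC_r(\vex', \vbQ') \leq \sC_r(\vex^\delta, \vbQ^\delta) + O(\delta)$; letting $\delta \to 0$ completes the proof. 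The main obstacle is performing the matrix Euler--Lagrange computation and the subsequent algebraic reduction cleanly in the presence of the Hadamard products defining $\bxi$ and $\btheta$, and ensuring that the $O(\delta)$ error survives the passage through inverses of potentially near-degenerate matrices in the final convexity step.
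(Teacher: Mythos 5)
Your overall strategy is the one the paper uses (log-determinant barrier, interior Euler--Lagrange equations, algebraic reduction to a Crisanti--Sommers form, convexity to remove the regularization), but there is a genuine gap at the decisive step: you treat the barrier-induced corrections as an ``error of order $\delta$'' and finish by ``letting $\delta \to 0$.'' This does not work, and the paper explicitly flags why: the correction $\be_p$ in \eqref{eq:errorlwbd} contains the inverses $(\bQ_{p+1}-\bQ_p)^{-1}$ of increments of the $\delta$-dependent minimizer, which may degenerate as $\delta \to 0$, so there is no reason that $\delta\,\be_p \to \bm 0$ and no usable $O(\delta)$ bound survives. The paper's resolution is not a smallness estimate at all: from the critical point identities $\bL_p^{-1} = \bd_p + \epsilon\bE_p$ \eqref{eq:critptlwbd} one defines the shifted path $\tilde\bQ_p = \bQ_p + \epsilon\be_p$, shows it is an admissible monotone path with positive definite increments (this uses $\bL_k^{-1} - \bL_{k+1}^{-1} > 0$, which in turn rests on Proposition~\ref{prop:gapxi}), and then convexity of $\bxi$ (Lemma~\ref{lem:lwbdconcave}) absorbs the error terms \emph{exactly}, giving $\sC_r^\epsilon(\vbQ^\epsilon) \geq \sC_r(\vex,\tilde\vbQ) \geq \inf \sC_r$ for every fixed $\epsilon$. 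The only limit $\epsilon \to 0$ is then taken on the Parisi side, where $\sP_r^\epsilon \downarrow \sP_r$ pointwise and the limit can be exchanged with the infimum. Your proposal names this issue as an ``obstacle'' in the last sentence but leaves it unresolved; since it is precisely the new difficulty of the vector-spin case, the proof is incomplete without it.

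Two smaller omissions: first, the correction $\be_p$ on the Parisi side is obtained by Hadamard division by $\bxi''(\bQ_p)$, which requires all entries of $\bxi''$ to be positive; the paper secures this by assuming $\vb_2 > 0$ and removes the assumption afterwards using uniform continuity of both infima in the temperature parameters (Propositions~\ref{prop:unifcontC} and~\ref{prop:unifcontP}). Second, the sequence $\vex$ should be held fixed and strictly increasing (with $x_{r-1}=1$) rather than included among the minimization variables: the factor $1/(x_p - x_{p-1})$ in $\be_p$ is only well defined for strictly increasing $\vex$, and the lower bound obtained for fixed $\vex$ and $r$ is independent of them, so one takes the infimum over $\vex$ and $r$ only at the very end.
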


Without loss of generality, we will assume that $\vb_2 > 0$. This assumption implies that all entries of $\bxi''(\bA)$ are positive for all symmetric matrices $\bA$, so we don't have to worry about dividing by $0$ in the derivation of the critical point conditions. We can make this assumption because both the infimums of $\sC_r$ and $\sP_r$ are uniformly continuous with respect to $(\vb_p)_{p \geq 2}$ [\ref{app:properties}, Proposition~\ref{prop:unifcontC} and Proposition~\ref{prop:unifcontP}] so we can send $\vb_2 \to 0$ to recover the result in the general case.

To simplify notation, we may also fix $x_{r - 1} = 1$. This won't affect the global infimum because the closure of paths satisfying $x_{r - 1} = 1$ is equal to the closure of paths satisfying $x_{r - 1} \leq 1$. 

It remains to prove that for fixed sequences $\vex$ and $r \geq 2$,
\[
\inf_{\Lambda,Q} \sP_r(\bL, \vex, \vbQ) \geq \inf_{r,x,Q} \sC_r(\vex, \vbQ).
\]
We prove this by examining the behavior of $\sP_r$ at its critical point. We will perturb $\sP_r$ by adding a logarithmic penalty at the boundary to force the minimizer of $\sP_r$ to have positive definite increments. The minimizer will satisfy an interior critical point condition that will allow us to reduce the perturbed functional $\sP_r^\epsilon$  into a perturbed $\sC_r^\epsilon$ functional.  These perturbed functionals will converge to $\sP_r$ and $\sC_r$ in the limit as the size of the barrier tends to $0$ using a convexity argument.  

\subsection{Adding a Positive Definite Barrier}
We fix $r \geq 2$ and let $\vbQ = (\bQ_k )_{k = 0}^r$ denote the monotone sequence of matrices such that $\bQ_0 = \bm{0}$ and $\bQ_{r} = \bQ$.  We begin our proof by modifying $\sP_r$ with a logarithmic barrier term that assigns infinitely large penalties if $\vbQ$ is not strictly increasing. Let $\epsilon > 0$, and consider the barrier function
\[
\sE_r( \vbQ ) = - \sum_{0 \leq k \leq r - 1} \log |\bQ_{k + 1} - \bQ_k|.
\]
Since $| \bA | \leq (\frac{\tr(A)}{n})^n$ [\ref{app:properties}, Proposition~\ref{prop:AMGM}] for all $k \leq r - 1$, we have $|\bQ_{k + 1} - \bQ_k| \leq 1$ so $\sE_r \geq 0$. Furthermore, $\sE_r \to +\infty$ if $|\bQ_{k + 1} - \bQ_k| \to 0$ for some $0 \leq k \leq r-1$. 

For a fixed strictly increasing sequence such that
\begin{equation}\label{eq:xseqlwbd}
0 = x_0 < x_1 < \dots < x_{r-1} = 1,
\end{equation}
we define the functional, 
\begin{align}
\sP_r^\epsilon(\bL, \vbQ) &= \frac{1}{2}\Big[ \langle \vh \vh^\trans, \bL_1^{-1} \rangle + \langle \bL,\bQ \rangle - n - \log |\bL| +  \sum_{1 \leq k \leq r-1} \frac{1}{x_k} \log \frac{|\bL_{k + 1}|}{|\bL_k|} + \langle\bxi'(\bQ_1) , \bL_1^{-1} \rangle \notag \\
&\quad  - \sum_{1 \leq k \leq r-1} x_k \cdot \Sum \big( \btheta (\bQ_{k + 1}) - \btheta (\bQ_{k})\big)  - \epsilon \sum_{0 \leq k \leq r - 1} \log |\bQ_{k + 1} - \bQ_k| \Big] \label{eq:pertparisi}.
\end{align}
Notice that $\sP^\epsilon_r = \sP_r + \epsilon \sE_r$ and it decreases pointwise to $\sP(\bL,\vex,\vbQ)$ on its domain as $\epsilon \to 0$, where $\vex$ is the fixed monotone sequence \eqref{eq:xseqlwbd}. The barrier term forces the minimizers to lie in the interior of the positive definite cone, since $\sP_r^\epsilon(\bL, \vbQ ) \to +\infty$ if one of the increments $|\bQ_{k + 1} - \bQ_k| \to 0$. We now examine the behavior of $\sP_r^\epsilon(\bL,\vbQ)$ at its minimizers and recover a system of critical point equations. 

\subsection{Critical Point Equations} We will study the first variation of $\sP_r^\epsilon$ to recover critical point conditions for its minimizer. These critical point conditions will relate the increments of $(\bL_k)_{k = 1}^{r}$ and $(\bd_k)_{k = 1}^{r - 1}$. We want to minimize the function
\[
\sP_r^\epsilon ( \bL, \vbQ ) := \sP_r^\epsilon ( \bL, \bQ_1, \dots, \bQ_{r - 1})
\]
over the parameters
\[
\bL \in \sL := \big\{ \bL \in \bS_+^n \mmm |\bL_1| > 0 \big\} 
\]
and
\[
(\bQ_k)_{k = 1}^{r-1} \in \cQ_r := \big\{ \bQ_1, \dots, \bQ_{r-1} \in \bS_+^n \mmm |\bQ_{k + 1} - \bQ_k| > 0, ~\forall~ 0 \leq k \leq r - 1 \big\}
\]
where $\bS_+^n$ is the space of positive semidefinite $n \times n$ matrices. By compactness, $\sP_r^\epsilon$ attains its minimum at some $\bL \in \sL$ and $(\bQ_k)_{k = 1}^{r-1} \in \cQ_r$. Since $\sE( \vbQ ) = \infty$ if the increments are not positive definite, $(\bQ_k)_{k = 1}^{r-1}$ must have also have positive definite increments,
\[
|\bQ_{k + 1} - \bQ_k | > 0 \quad \forall~ 0 \leq k \leq r - 1.
\]
This implies that symmetric matrices are admissible variations of $\bL$ and $\vbQ$ [\ref{app:properties}, Proposition~\ref{prop:pertposdef}]. In particular, if $\bC$ is a symmetric matrix, then for all $t$ sufficiently small,
\[
\bL + t \bC \in \sL,
\]
and for $1 \leq k \leq r- 1$,
\[
(\bQ_1, \dots, \bQ_k + t \bC, \dots \bQ_{r-1}) \in  \cQ_r.
\]
If $\bL \in \sL$ and $\vbQ \in \cQ_r$ is a minimizer of $\sP^\epsilon$, then for all $1 \leq k \leq r- 1$, 
\[
\frac{d}{dt} \sP_r^\epsilon( \bL + t \bC, \vbQ ) \Big|_{t = 0} = 0 \quad\text{and}\quad \frac{d}{dt} \sP_r^\epsilon( \bL, \bQ_1, \dots, \bQ_k + t \bC, \dots \bQ_{r-1} ) \Big|_{t = 0} = 0.
\]
We can conclude the directional derivatives must be equal to $0$ because both $\bC$ and $-\bC$ are admissible variations. We can compute the first variation of the functionals explicitly by computing the matrix derivatives of $\sP_r^\epsilon$ and derive some critical point conditions on the minimizers:

	\noindent(a)~~~ The directional derivatives of $\sP_r^\epsilon$ with respect to $\bL$ in the symmetric direction $2\bC$ is
	\begin{align}
	\partial_{\bL} \sP_r^\epsilon &= \langle \bQ, \bC \rangle - \langle \bL^{-1} , \bC \rangle - \langle \bL_1^{-1}  \big(\vh \vh^T + \bxi'(\bQ_1) \big) \bL_1^{-1}, \bC \rangle  + \sum_{1 \leq k \leq r - 1} \frac{1}{x_k} \langle\bL_{k + 1}^{-1} - \bL_k^{-1}, \bC \rangle  \label{eq:derivParisiL}.
	\end{align}
	At the minimizer, we require
	\[
	\partial_{\bL} \sP_r^\epsilon := \frac{d}{dt} \sP_r^\epsilon( \bL + 2t \bC, \vbQ ) \Big|_{t = 0} = 0.
	\]
	This equality holds for all symmetric directions $\bC$, so the minimizer must satisfy the equation  [\ref{app:properties}, Proposition~\ref{prop:symmetrictest}]
	\begin{equation}\label{eq:lwbdrcond}
	\bQ = \bL^{-1} + \bL_1^{-1}  \big(\vh \vh^T + \bxi'(\bQ_1) \big) \bL_1^{-1} + \sum_{1 \leq k \leq r - 1} \frac{1}{x_k}  (  \bL_k^{-1} - \bL_{k + 1}^{-1}).
	\end{equation}
	
	\noindent (b)~~~ For $2 \leq p \leq r - 1$, the directional derivatives of $\sP_r^\epsilon$ with respect to $\bQ_p$ in the symmetric direction $2\bC$ is [Appendix~\ref{app:derivativesP}]
	\begin{align}
	\partial_{\bQ_p} \sP_r^\epsilon &= -(x_p - x_{p - 1}) \langle \bL_1^{-1}  \big( \vh \vh^T + \bxi'(\bQ_1) \big) \bL_1^{-1} , \bxi''(\bQ_p) \odot \bC \rangle \notag
	\\&\quad -(x_p - x_{p - 1})  \sum_{1 \leq k \leq p - 1} \frac{1}{x_k} \langle\bL^{-1}_{k} - \bL^{-1}_{k + 1} , \bxi''(\bQ_p) \odot \bC \rangle \notag
	\\&\quad + (x_p - x_{p - 1}) \langle \bQ_{p}, \bxi''(\bQ_p) \odot \bC \rangle  \notag
	\\&\quad + \epsilon \langle (\bQ_{p+ 1 } - \bQ_{p} )^{-1} , \bC \rangle   - \epsilon \langle( \bQ_{p} - \bQ_{p - 1} )^{-1} , \bC \rangle.\label{eq:derivParisiQp}
	\end{align}
	At the minimizer, we require
	\[
	\partial_{\bQ_p} \sP_r^\epsilon :=  \frac{d}{dt} \sP_r^\epsilon( \bL, \bQ_1, \dots, \bQ_p + 2t \bC, \dots, \bQ_{r-1} ) \Big|_{t = 0} = 0.
	\]
	This equality holds for all symmetric directions $\bC$, so the minimizer satisfies the critical point equation
	\begin{align}
	\bQ_{p} &= \bL_1^{-1}  \big( \vh \vh^T + \bxi'(\bQ_1) \big) \bL_1^{-1} + \sum_{1 \leq k \leq p - 1} \frac{1}{x_k} (\bL^{-1}_{k} - \bL^{-1}_{k + 1}) - \epsilon \be_p \label{eq:lwbdpcond}
	%\\&\quad - \frac{\epsilon}{x_p - x_{p-1}} (\bxi'(\bQ_{p + 1}) - \bxi'(\bQ_{p}) )^{-1} \oslash \bxi''(\bQ_p) + \frac{\epsilon}{x_p - x_{p-1}}(\bxi'(\bQ_{p}) - \bxi'(\bQ_{p - 1}) )^{-1}  \oslash \bxi''(\bQ_p).
	\end{align}
	where
	\begin{equation}\label{eq:errorlwbd}
	\be_p := \frac{1}{x_p - x_{p-1}} \Big(  (\bQ_{p + 1} - \bQ_{p} )^{-1} - (\bQ_{p} - \bQ_{p - 1} )^{-1} \Big) \oslash \bxi''(\bQ_p).
	\end{equation}
	The notation $\oslash$ refers to the Hadamard division operation (entry-wise division). $\be_p$ is well defined since the fixed $(x_p)_{p = 1}^r$ in \eqref{eq:xseqlwbd} is strictly monotone and $\vb_2 > 0$ so all entries of $\bxi''(\bA)$ is positive. 
	
	\noindent (c)~~~ For $p = 1$, the directional derivatives of $\sP_r^\epsilon$ with respect to $\bQ_1$ in the symmetric direction $2\bC$ is [Appendix~\ref{app:derivativesP}]
	\begin{align}
	\partial_{\bQ_1} \sP_r^\epsilon &= -x_1 \langle \bL_1^{-1}  \big( \vh \vh^T + \bxi'(\bQ_1) \big) \bL_1^{-1} , \bxi''(\bQ_1) \odot \bC \rangle + x_1 \langle \bQ_{1} ,  \bxi''(\bQ_1) \odot \bC \rangle \notag
	\\&\quad + \epsilon \langle  ( \bQ_{2} - \bQ_{1} )^{-1} , \bC \rangle  - \epsilon \langle \bQ_{1}^{-1} , \bC \rangle \label{eq:derivParisiQ1} .
	\end{align}
	At the minimizer, we require
	\[
	\partial_{\bQ_1} \sP_r^\epsilon :=  \frac{d}{dt} \sP_r^\epsilon( \bL, \bQ_1 + 2t\bC, \dots, \bQ_{r - 1} ) \Big|_{t = 0} = 0.
	\]
	This equality holds for all symmetric directions $\bC$, so the minimizer satisfies the critical point equation
	\begin{equation}\label{eq:lwbd1cond}
	\bQ_{1} = \bL_1^{-1}  \big( \vh \vh^T + \bxi'(\bQ_1) \big) \bL_1^{-1} - \epsilon \be_1,
	\end{equation}
	where $\be_1$ is given by the formula in \eqref{eq:errorlwbd} with $p = 1$. 

For $1 \leq p \leq r- 1$, the critical point equations \eqref{eq:lwbdpcond} and \eqref{eq:lwbd1cond} can be expressed as
\begin{equation}\label{eq:lwbdcriticalpointQ}
\bQ_p = \bL_1^{-1} (\vh \vh^T + \bxi'(\bQ_1) ) \bL_1^{-1} + \sum_{1 \leq k \leq p - 1} \frac{1}{x_k} (\bL^{-1}_{k} - \bL^{-1}_{k + 1}) - \epsilon \be_p
\end{equation}
where $\be_r := \bm{0}$ and $\be_p$ was defined in \eqref{eq:errorlwbd}. These critical point conditions can be used to relate $\bL_k$ in $\sP_r^\epsilon$ with the $\bd_k$ terms in $\sC_r$. Taking differences of the critical point conditions \eqref{eq:lwbdrcond} and \eqref{eq:lwbdcriticalpointQ}, we can conclude that,
\begin{align*}
x_{r-1}(\bQ - \bQ_{r - 1}) &= x_{r-1}\bL^{-1} + (\bL_{r - 1}^{-1} - \bL^{-1})  - \epsilon x_{r-1} ( \be_r - \be_{r-1} )
\\&= \bL_{r - 1}^{-1} - \epsilon x_{r-1} ( \be_r - \be_{r-1} )
\end{align*}
since $x_{r - 1} = 1$, and for $1 \leq p \leq r - 2$, we can conclude that
\begin{align*}
x_{p} (\bQ_{p+1} - \bQ_{p}) &= \bL^{-1}_{p} - \bL_{p+1}^{-1} - \epsilon x_{p} ( \be_{p+1} - \be_{p} ).
\end{align*}
Taking sums of the above, we have for $1 \leq p \leq r-1$, 
\begin{align}
\sum_{p \leq k \leq r-1} x_k (\bQ_{k + 1} - \bQ_k) + \epsilon \sum_{p \leq k \leq r-1} x_k (\be_{k + 1} - \be_k) = \bd_{p} + \epsilon \sum_{p \leq k \leq r-1} x_k (\be_{k + 1} - \be_k) &= \bL_{p}^{-1}.
\end{align}
We will summarize this critical point condition in the following lemma.
\begin{lem}\label{lem:lwbdcritpt} For fixed $r \geq 2$, if $\bL$ and $\vbQ$ is a minimizer of $\sP_r^\epsilon(\bL,\vbQ)$ and $\vb_2 > 0$, then $\bL$ and $\vbQ$ satisfy the following critical point equations
	\begin{equation}\label{eq:critptlwbd}
	\bL_{p}^{-1} = \bd_p(\epsilon) \qquad \text{ for } 1 \leq p \leq r- 1
	\end{equation}
	where
	\begin{equation}\label{eq:lwbderror}
	\bd_p(\epsilon) = \bd_p + \epsilon \bE_p \qquad \text{ and } \qquad \bE_p = \sum_{p \leq k \leq r-1} x_k (\be_{k + 1} - \be_k).
	\end{equation}
\end{lem}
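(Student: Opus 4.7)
The plan is to exploit the barrier function to force the minimizer into the interior of the positive definite cone and then extract \eqref{eq:critptlwbd} from first-order matrix calculus followed by telescoping.

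First I would observe that by compactness and the fact that $\sE_r(\vbQ)\to+\infty$ as soon as any increment $\bQ_{k+1}-\bQ_k$ degenerates, the infimum of $\sP_r^\epsilon$ is attained at some $(\bL,\vbQ)$ with $|\bL_1|>0$ and \emph{every} increment $\bQ_{k+1}-\bQ_k$ strictly positive definite. Hence an arbitrary symmetric $\bC$ is an admissible variation in $\bL$ or in any single $\bQ_p$ (both signs of $\bC$ stay in the domain), so every directional derivative must vanish at the minimizer.

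Next I would compute the three families of stationarity conditions. Differentiation in $\bL$ uses only the standard identities $\partial_\bL\log|\bL|=\bL^{-1}$ and the affine dependence $\bL_k = \bL - \sum_{j\ge k} x_j(\bxi'(\bQ_{j+1})-\bxi'(\bQ_j))$; setting the directional derivative to zero and using that $\bC$ is arbitrary symmetric (via \ref{prop:symmetrictest}) yields \eqref{eq:lwbdrcond}. Differentiation in $\bQ_p$ is more delicate because $\bQ_p$ enters $\bL_k$ for every $k\le p$ through $\bxi'(\bQ_p)$ and also enters the $\btheta$ and barrier terms; the key identity $\partial_{\bQ_p}\bxi'(\bQ_p)[\bC]=\bxi''(\bQ_p)\odot\bC$ yields a Hadamard-product form, and the telescoping of $\sum_{k} x_k^{-1}(\bL_k^{-1}-\bL_{k+1}^{-1})$ over the indices that actually depend on $\bQ_p$ produces the overall prefactor $(x_p-x_{p-1})$. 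The barrier contributes $\epsilon(\bQ_{p+1}-\bQ_p)^{-1}-\epsilon(\bQ_p-\bQ_{p-1})^{-1}$, and after dividing by $(x_p-x_{p-1})\bxi''(\bQ_p)$ this becomes exactly $\epsilon\be_p$ from \eqref{eq:errorlwbd}. The strict monotonicity \eqref{eq:xseqlwbd} and the assumption $\vb_2>0$ guarantee the division is well defined; the $p=1$ case is handled separately as in \eqref{eq:derivParisiQ1} because $\bQ_0=\bm 0$ kills the left barrier term.

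Finally I would combine the equations. Comparing \eqref{eq:lwbdrcond} with \eqref{eq:lwbdcriticalpointQ} at consecutive levels and using $x_{r-1}=1$ and $\be_r:=\bm 0$ gives
\begin{equation*}
x_p(\bQ_{p+1}-\bQ_p) = \bL_p^{-1} - \bL_{p+1}^{-1} - \epsilon x_p(\be_{p+1}-\be_p), \qquad 1\le p\le r-2,
\end{equation*}
together with $x_{r-1}(\bQ-\bQ_{r-1}) = \bL_{r-1}^{-1} - \epsilon x_{r-1}(\be_r-\be_{r-1})$. Summing from $p$ to $r-1$ (so the right-hand side telescopes to $\bL_p^{-1}$) and recalling the definition \eqref{def:D} of $\bd_p$ yields $\bL_p^{-1}=\bd_p+\epsilon\bE_p$, which is \eqref{eq:critptlwbd}.

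The main obstacle is the bookkeeping in the $\bQ_p$ variation: one has to track all the terms in which $\bQ_p$ appears (through $\bxi'$, $\btheta$, and the two adjacent barrier logarithms), recognize the common Hadamard factor $\bxi''(\bQ_p)\odot\bC$, and correctly produce the $(x_p-x_{p-1})$ prefactor from the telescoping of $\bL_k^{-1}-\bL_{k+1}^{-1}$; after that, the assembly of the final identity is just a telescoping sum.
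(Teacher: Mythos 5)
Your proposal is correct and follows essentially the same route as the paper: the barrier forces the minimizer into the interior so symmetric perturbations are admissible, the three families of stationarity conditions \eqref{eq:lwbdrcond}, \eqref{eq:lwbdpcond}, \eqref{eq:lwbd1cond} are obtained by matrix differentiation and Proposition~\ref{prop:symmetrictest}, and differencing consecutive equations followed by telescoping from $p$ to $r-1$ yields $\bL_p^{-1}=\bd_p+\epsilon\bE_p$. The only slight imprecision is your remark that $\bQ_0=\bm 0$ ``kills'' the left barrier term at $p=1$ --- it does not vanish but simply becomes $\epsilon\bQ_1^{-1}$, exactly as in \eqref{eq:derivParisiQ1}, which is consistent with the formula for $\be_1$.
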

%These critical point conditions will allow us to reduce $\sP_r^\epsilon$ to a functional that resembles $\sC_r$ up to some error terms that depend on $\epsilon$. 

\subsection{Reduction to an approximate Crisanti--Sommers functional} In this subsection, we will reduce $\sP_r^\epsilon$ defined in \eqref{eq:pertparisi} to an approximate Cristanti--Sommers functional. If $\bL$ and $\vbQ$ satisfy the critical point conditions \eqref{eq:critptlwbd}, we will show that $\sP_r^\epsilon$ can be reduced to
\begin{align}
\sC_r^\epsilon(\vbQ) &= \frac{1}{2}\Big[ \langle \vh \vh^\trans, \bd_1(\epsilon) \rangle + \frac{1}{x_{r - 1}}\log| \bd_{r - 1}(\epsilon) | - \sum_{1 \leq k \leq r-2} \frac{1}{x_k} \log \frac{|\bd_{k + 1}(\epsilon)|}{|\bd_k(\epsilon)|} + \langle \bQ_1 , \bd_1^{-1}(\epsilon) \rangle \notag \\
&\quad + \sum_{1 \leq k \leq r-1} x_k \cdot \Sum \big( \bxi (\bQ_{k + 1}) - \bxi (\bQ_{k})\big) \notag \\
&\quad - \epsilon \sum_{1 \leq k \leq r-2} \langle\bE_{k+1} - \bE_k , \bxi' (\bQ_{k + 1}) \rangle - \sum_{1 \leq k \leq r-2} \frac{\epsilon}{x_{k}}  \langle \bd_{k+1}^{-1}(\epsilon) , \bE_{k} -  \bE_{k+1} \rangle \notag \\
&\quad -\epsilon \langle\bd_{r - 1}^{-1}(\epsilon), \bE_{r-1}\rangle + \epsilon \langle \bxi'(\bQ_{r - 1}), \bE_{r-1} \rangle  - \epsilon \sum_{0 \leq k \leq r - 1} \log | \bQ_{k + 1} - \bQ_k |  \Big]. \label{eq:approxcs}
\end{align}
Notice that $\sC_r^\epsilon(\vbQ)$ is of the same form as $\sC_r(\vex,\vbQ)$, but with $\bd_k$ replaced by $\bd_k(\epsilon)$ and some additional error terms. If we set $\epsilon = 0$, then the error terms in the second line all vanish and we are left with the usual $\sC_r(\vex,\vbQ)$ functional. In the next subsection, we will show that we can bound the minimum of $\sC_r^\epsilon(\vbQ)$ with $\sC_r$ evaluated at a different path to remove the error terms. 

\begin{lem}\label{lem:lwbdreduction}
	If $\bL$ and $\vbQ$ satisfy the critical point conditions \eqref{eq:critptlwbd}, then
	\[
	\sP_r^\epsilon( \bL ,\vbQ) = \sC_r^\epsilon(\vbQ).
	\]
\end{lem}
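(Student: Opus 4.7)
This is a direct substitution argument: replace every occurrence of $\bL_p$ in $\sP_r^\epsilon(\bL,\vbQ)$ via the critical point identity $\bL_p^{-1} = \bd_p(\epsilon)$ from Lemma~\ref{lem:lwbdcritpt} and rearrange until the result coincides with $\sC_r^\epsilon(\vbQ)$. Several pieces transfer immediately: $\langle \vh\vh^\trans, \bL_1^{-1}\rangle$ becomes $\langle \vh\vh^\trans, \bd_1(\epsilon)\rangle$, and the barrier $-\epsilon \sum_{k=0}^{r-1}\log |\bQ_{k+1} - \bQ_k|$ appears on both sides. For the log-determinants, I would use $x_{r-1}=1$ to split off the top term from $\sum_{k=1}^{r-1}\frac{1}{x_k}\log\frac{|\bL_{k+1}|}{|\bL_k|}$, which cancels $-\log|\bL|$ and leaves $-\log|\bL_{r-1}| + \sum_{k=1}^{r-2}\frac{1}{x_k}\log\frac{|\bL_{k+1}|}{|\bL_k|}$; under the substitution $\bL_p^{-1}=\bd_p(\epsilon)$ this is precisely the log-determinant part of $\sC_r^\epsilon$.

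For the polynomial pieces, the key algebraic identity is $\bxi'(\bA) \odot \bA = \bxi(\bA) + \btheta(\bA)$, so that $\Sum(\btheta(\bA)) = \langle \bxi'(\bA),\bA\rangle - \Sum(\bxi(\bA))$. Applying this, the $\btheta$-sum in $\sP_r^\epsilon$ decomposes as the $\bxi$-sum of $\sC_r^\epsilon$ plus the residual $-\sum_k x_k(\langle \bxi'(\bQ_{k+1}),\bQ_{k+1}\rangle - \langle \bxi'(\bQ_k),\bQ_k\rangle)$. An Abel summation (using $x_0 = 0$ and $x_{r-1}=1$) rewrites the residual as $-\langle \bxi'(\bQ),\bQ\rangle + \sum_{p=1}^{r-1}(x_p - x_{p-1})\langle \bxi'(\bQ_p),\bQ_p\rangle$. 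In parallel, I would apply Abel summation to the definition~\eqref{def:L} to obtain $\bL = \bL_1 + \bxi'(\bQ) - \sum_{p}(x_p - x_{p-1})\bxi'(\bQ_p)$, which lets me expand $\langle\bL,\bQ\rangle$ and cancel the stray $\langle \bxi'(\bQ),\bQ\rangle$ against its counterpart above.

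It then remains to match $\langle \bL_1, \bQ\rangle$ and $\langle \bxi'(\bQ_1),\bL_1^{-1}\rangle$ (plus the Abel-summed remainder $\sum_p(x_p - x_{p-1})\langle\bxi'(\bQ_p),\bQ_p - \bQ\rangle$ and the constant $-n$) against the Crisanti--Sommers term $\langle\bQ_1, \bd_1^{-1}(\epsilon)\rangle$ together with the four error pieces $-\epsilon\sum_{k=1}^{r-2}\langle\bE_{k+1}-\bE_k,\bxi'(\bQ_{k+1})\rangle$, $-\sum_{k=1}^{r-2}\frac{\epsilon}{x_k}\langle\bd_{k+1}^{-1}(\epsilon),\bE_k - \bE_{k+1}\rangle$, $-\epsilon\langle\bd_{r-1}^{-1}(\epsilon),\bE_{r-1}\rangle$, $\epsilon\langle\bxi'(\bQ_{r-1}),\bE_{r-1}\rangle$. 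To close, I would substitute the critical point equation~\eqref{eq:lwbdcriticalpointQ} for each $\bQ_p$ (with $\bL_k^{-1}$ replaced by $\bd_k(\epsilon)$): the $\vh\vh^\trans + \bxi'(\bQ_1)$ piece collapses against $\bL_1$ to produce $\langle\bQ_1, \bd_1^{-1}(\epsilon)\rangle$ (via $\langle\bL_1^{-1}(\vh\vh^\trans+\bxi'(\bQ_1))\bL_1^{-1},\bL_1\rangle = \langle\vh\vh^\trans+\bxi'(\bQ_1),\bd_1(\epsilon)\rangle$), while the telescoping sum $\sum_{k=1}^{p-1}\frac{1}{x_k}(\bd_k(\epsilon) - \bd_{k+1}(\epsilon))$ inherited from~\eqref{eq:lwbdcriticalpointQ} matches the $\bL_1$-telescoping on the other side.

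\textbf{Main obstacle.} The manipulation itself is routine algebra, but the bookkeeping of the $\epsilon$-corrections is delicate: each substitution $\bL_k^{-1} \mapsto \bd_k + \epsilon\bE_k$ spawns an extra $\epsilon\bE_k$ piece, and the four error terms of $\sC_r^\epsilon$ have been engineered so that they absorb exactly these corrections after a second Abel summation applied to the $\epsilon \be_p$ contributions from~\eqref{eq:lwbdcriticalpointQ} (using the telescope $\bE_p = \sum_{p\leq k \leq r-1} x_k(\be_{k+1}-\be_k)$). At $\epsilon = 0$ the argument collapses to the standard one-dimensional Crisanti--Sommers identity of \cite[Section~4]{TSPHERE}; the extra work here is purely in verifying that the $\bE_k$ pieces produced by the perturbation reassemble into the claimed combinations.
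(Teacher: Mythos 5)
Your opening reductions are correct and coincide with the paper's: the field term transfers via $\bL_1^{-1}=\bd_1(\epsilon)$, the barrier is common to both sides, the split of the log-determinant sum using $x_{r-1}=1$ followed by the substitution $\bL_k^{-1}=\bd_k(\epsilon)$ matches the log-determinant block of $\sC_r^\epsilon$, and the identity $\Sum(\btheta(\bA))=\langle \bxi'(\bA),\bA\rangle-\Sum(\bxi(\bA))$ together with your Abel summations (including $\bL=\bL_1+\bxi'(\bQ)-\sum_p(x_p-x_{p-1})\bxi'(\bQ_p)$) is sound algebra. Up to that point you are tracking the paper's computation.

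The gap is in the closing step. First, you propose to substitute the full stationarity equations \eqref{eq:lwbdcriticalpointQ} (and, implicitly, you would also need \eqref{eq:lwbdrcond} to dispose of the $\bQ$ appearing in $\sum_p(x_p-x_{p-1})\langle\bxi'(\bQ_p),\bQ_p-\bQ\rangle$). But the lemma's hypothesis is only \eqref{eq:critptlwbd}, i.e.\ $\bL_p^{-1}=\bd_p(\epsilon)$; this fixes only the increments and contains no information about $\vh\vh^\trans$ or about $\bQ_1$ being $\bL_1^{-1}(\vh\vh^\trans+\bxi'(\bQ_1))\bL_1^{-1}-\epsilon\be_1$, so \eqref{eq:lwbdcriticalpointQ} is a strictly stronger assumption than what you are given, and the paper's proof shows it is not needed: everything follows from \eqref{eq:critptlwbd} plus the definitional telescopes $\bL_{k+1}-\bL_k=x_k(\bxi'(\bQ_{k+1})-\bxi'(\bQ_k))$ and $\bd_k-\bd_{k+1}=x_k(\bQ_{k+1}-\bQ_k)$. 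Second, your proposed mechanism does not work as described: expanding $\langle\bQ_1,\bd_1^{-1}(\epsilon)\rangle$ via \eqref{eq:lwbdcriticalpointQ} resurrects $\langle\vh\vh^\trans,\bL_1^{-1}\rangle$, which you had already cancelled at the first step, and nothing in your sketch explains how these field contributions disappear again. Third, the genuinely hard part — showing that the $\epsilon$ corrections reassemble into exactly the four error terms of $\sC_r^\epsilon$ — is asserted ("engineered so that they absorb exactly these corrections") rather than argued. In the paper this is where the work lies: one writes $\bxi'(\bQ_{k+1})-\bxi'(\bQ_k)=\bd_k^{-1}(\epsilon)(\bQ_{k+1}-\bQ_k)\bd_{k+1}^{-1}(\epsilon)+\frac{\epsilon}{x_k}\bd_k^{-1}(\epsilon)(\bE_k-\bE_{k+1})\bd_{k+1}^{-1}(\epsilon)$ to turn the summed-by-parts middle block into the two $\epsilon$ error sums, and then handles the boundary by the trace identity \eqref{eq:tracep} for $\langle\bd_{r-1}^{-1}(\epsilon),\bQ_{r-1}\rangle$, using $\bd_{r-1}(\epsilon)=\bQ-\bQ_{r-1}+\epsilon\bE_{r-1}$ and $\bL_{r-1}=\bL-\bxi'(\bQ)+\bxi'(\bQ_{r-1})$. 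Without an argument of this kind, your plan does not yet establish the identity, and as written it would at best prove the statement under the extra hypotheses \eqref{eq:lwbdrcond} and \eqref{eq:lwbdcriticalpointQ} rather than under \eqref{eq:critptlwbd} alone.
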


\begin{proof}
	The reduction of $\sP_r^\epsilon$ to $\sC_r^\epsilon$ is a straightforward, but tedious computation. We will show
	\[
	2 (\sP_r^\epsilon( \bL ,\vbQ) - \sC_r^\epsilon(\vbQ) ) = 0.
	\]
	If \eqref{eq:critptlwbd} holds, then
	\begin{alignat}{2}
	\bL_{k + 1} - \bL_{k}  &= x_k (\bxi'(\bQ_{k + 1}) - \bxi'(\bQ_{k})) &&\qquad 1 \leq k \leq r - 1 \label{eq:lwbd1}\\
	\bd_{k} - \bd_{k + 1}  &= x_k (\bQ_{k + 1} - \bQ_{k}) &&\qquad 1 \leq k \leq r - 1\label{eq:lwbd2}\\
	\bL_{k}^{-1} &= \bd_k(\epsilon)  &&\qquad 1 \leq k \leq r - 1\label{eq:lwbd3}
	\end{alignat}
	where $\bd_{r} := \bm 0$. These identities will be used multiple times throughout this proof. 
	
	We begin by observing that the external fields cancel if \eqref{eq:critptlwbd} holds,
	\begin{equation}\label{eq:lwbdfield}
	\langle \vh \vh^\trans , \bL_1^{-1} \rangle \stackrel{\mathclap{\eqref{eq:lwbd3}}}{=} \langle \vh \vh^\trans, \bd_1(\epsilon) \rangle.
	\end{equation}
	Next, we simplify the summation of the logarithm terms in $\sP_r^\epsilon$ using the fact $x_{r-1} = 1$,
	\begin{align}
	- \log |\bL| + \sum_{1 \leq k \leq r-1} \frac{1}{x_k} \log \frac{|\bL_{k + 1}|}{|\bL_k|} &= - \log |\bL| + \frac{1}{x_{r-1}} ( \log|\bL| - \log|\bL_{r-1}| ) + \sum_{1 \leq k \leq r-2} \frac{1}{x_k} \log \frac{|\bL_{k + 1}|}{|\bL_k|} \notag
	\\&\stackrel{\mathclap{\eqref{eq:lwbd3}}}{=} \frac{1}{x_{r - 1}} \log|\bd_{r-1} (\epsilon)| - \sum_{1 \leq k \leq r-2} \frac{1}{x_k} \log \frac{|\bd_{k + 1}(\epsilon)|}{|\bd_k(\epsilon)|} \label{eq:lwbdlog}.
	\end{align}
	Therefore, the log determinant terms in $\sP_r^\epsilon$ and $\sC_r^\epsilon$ also cancel. 
	
	Since $\btheta(\bA) = \bA \odot \bxi'(\bA) - \bxi(\bA)$ and $\Sum(\bA \odot \bB) =  \langle \bA,  \bB\rangle$, the remaining terms in $2(\sP^\epsilon_r - \sC_r^\epsilon)$ are
	\begin{align}
	%&\quad - \sum_{1 \leq k \leq r-1} x_k \cdot \Sum \big( \btheta (\bQ_{k + 1}) - \btheta (\bQ_{k})\big) - \sum_{1 \leq k \leq r-1} x_k \cdot \Sum \big( \bxi (\bQ_{k + 1}) - \bxi (\bQ_{k})\big) 
	&\label{eq:lwbdsummation}\quad - \sum_{1 \leq k \leq r-2} x_k \Big( \langle\bQ_{k + 1} , \bxi' (\bQ_{k + 1}) \rangle  - \langle\bQ_{k}, \bxi' (\bQ_{k}) \rangle \Big)  - \Big( \langle \bQ, \bxi' (\bQ) \rangle - \langle \bQ_{r-1}, \bxi' (\bQ_{r-1}) \rangle \Big)\\ 
	& \label{eq:lwbdcan2}\quad + \epsilon \sum_{1 \leq k \leq r-2} \langle\bE_{k+1} - \bE_k , \bxi' (\bQ_{k + 1}) \rangle + \sum_{1 \leq k \leq r-2} \frac{\epsilon}{x_{k}}  \langle \bd_{k+1}^{-1}(\epsilon) , \bE_{k} -  \bE_{k+1} \rangle\\
	&\label{eq:lwbdcan3}\quad + \langle \bL,\bQ \rangle - n + \langle\bxi'(\bQ_1) , \bL_1^{-1} \rangle - \langle \bQ_1 , \bd_1^{-1}(\epsilon) \rangle +\epsilon \langle\bd_{r - 1}^{-1}(\epsilon), \bE_{r-1}\rangle  - \epsilon \langle \bxi'(\bQ_{r - 1}), \bE_{r-1} \rangle .
	\end{align}
	We will show that \eqref{eq:lwbdsummation} will cancel \eqref{eq:lwbdcan2} and \eqref{eq:lwbdcan3} at the critical point. We start by simplifying the summation term in \eqref{eq:lwbdsummation},
	\begin{align}
	&\quad  - \sum_{1 \leq k \leq r-2} x_k \Big( \langle\bQ_{k + 1} , \bxi' (\bQ_{k + 1}) \rangle  - \langle\bQ_{k}, \bxi' (\bQ_{k}) \rangle \Big)  \label{eq:sumfirstr1}
	\\&= -\sum_{1 \leq k \leq r-2} \Big( \langle x_k(\bQ_{k + 1} - \bQ_{k}), \bxi' (\bQ_{k + 1}) \rangle + \langle \bQ_{k}, x_k (\bxi' (\bQ_{k + 1}) - \bxi' (\bQ_{k}) ) \rangle \Big) \notag
	\\&\stackrel{\mathclap{\eqref{eq:lwbd1} \eqref{eq:lwbd2}}}{=} - \sum_{1 \leq k \leq r-2} \Big( \langle \bd_{k} - \bd_{k + 1}, \bxi' (\bQ_{k + 1}) \rangle  + \langle \bQ_{k}, \bL_{k + 1} - \bL_{k} \rangle \Big) \notag
	\\&\stackrel{\mathclap{\eqref{eq:lwbd3}}}{=}- \sum_{1 \leq k \leq r-2} \Big( \langle \bd_{k}(\epsilon) - \bd_{k + 1}(\epsilon), \bxi' (\bQ_{k + 1}) \rangle  + \langle \bQ_{k},  \bL_{k + 1} - \bL_{k} \rangle  \Big) - \epsilon \sum_{1 \leq k \leq r-2}  \langle \bE_{k+1} - \bE_k, \bxi' (\bQ_{k + 1}) \rangle. \label{eq:summationp4}
	\end{align}
	Using summation by parts and \eqref{eq:critptlwbd}, the first summation in \eqref{eq:summationp4} is equal to
	\begin{align}
	&-\sum_{1 \leq k \leq r-2} \Big( \langle \bd_{k}(\epsilon) , \bxi' (\bQ_{k + 1}) - \bxi' (\bQ_{k}) \rangle  + \langle \bd_{k + 1}^{-1}(\epsilon), \bQ_{k} - \bQ_{k + 1} \rangle  \Big) \label{eq:summationp2}
	\\&- \langle \bd_1(\epsilon), \bxi'(\bQ_1) \rangle + \langle \bd_{r - 1}(\epsilon), \bxi'(\bQ_{r - 1}) \rangle - \langle \bd^{-1}_{r - 1}(\epsilon) , \bQ_{r- 1} \rangle + \langle \bd_{1}^{-1}(\epsilon), \bQ_1 \rangle \label{eq:summationp2-2}.
	\end{align}
	The critical point conditions \eqref{eq:critptlwbd} implies
	\begin{align*}
	\bxi'( \bQ_{k+1} ) - \bxi'( \bQ_{k} )  &\stackrel{\mathclap{\eqref{eq:lwbd1}}}{=} \frac{1}{x_{k}}( \bL_{k + 1} -  \bL_{k}) 
	%\\&\color{gray} \stackrel{\mathclap{\eqref{eq:lwbd3}}}{=} \frac{1}{x_{k}}( \bd_{k + 1}^{-1}(\epsilon) -  \bd_{k}^{-1}(\epsilon)) 
	\\&\stackrel{\mathclap{\eqref{eq:lwbd3}}}{=} \frac{1}{x_{k}} \bd_{k}^{-1}(\epsilon)( \bd_{k}(\epsilon) -  \bd_{k+1}(\epsilon))\bd_{k+1}^{-1} (\epsilon)
	\\&\stackrel{\mathclap{\eqref{eq:lwbd2}}}{=} \bd_{k}^{-1}(\epsilon)( \bQ_{k+1} -  \bQ_{k})\bd_{k+1}^{-1}(\epsilon) + \frac{\epsilon}{x_{k}} \bd_{k}^{-1}(\epsilon)( \bE_{k} -  \bE_{k+1})\bd_{k+1}^{-1}(\epsilon),
	\end{align*}
	which combined with the fact $\tr(\bA \bB \bC) = \tr(\bC \bA \bB)$ implies the summation term \eqref{eq:summationp2} simplifies to 
	\begin{equation}\label{eq:summationp3}
	- \sum_{1 \leq k \leq r-2} \frac{\epsilon}{x_{k}} \langle \bd_{k+1}^{-1}(\epsilon) ,  \bE_{k} -  \bE_{k+1} \rangle.
	\end{equation}
	Substituting \eqref{eq:summationp3} into \eqref{eq:summationp4} and adding the boundary terms \eqref{eq:summationp2-2} implies that
	\begin{align}
	\eqref{eq:sumfirstr1} &=  - \epsilon \sum_{1 \leq k \leq r-2} \langle\bE_{k+1} - \bE_k , \bxi' (\bQ_{k + 1}) \rangle -\epsilon \sum_{1 \leq k \leq r-2} \frac{1}{x_{k}} \langle \bd_{k+1}^{-1}(\epsilon), \bE_{k} -  \bE_{k+1} \rangle \notag
	\\&\quad - \langle \bd_1(\epsilon), \bxi'(\bQ_1) \rangle + \langle \bd_{r - 1}(\epsilon), \bxi'(\bQ_{r - 1}) \rangle - \langle\bd^{-1}_{r - 1}(\epsilon), \bQ_{r- 1} \rangle + \langle \bd_{1}^{-1}(\epsilon), \bQ_1 \rangle. \label{eq:lwbdsummationsimplified}
	%\\&\quad - \langle\bQ, \bxi' (\bQ) \rangle + \langle \bQ_{r-1}, \bxi' (\bQ_{r-1}) \rangle. 
	\end{align}
	
	Substituting \eqref{eq:lwbdsummationsimplified} into \eqref{eq:lwbdsummation} implies
	\begin{align}
	2 (\sP_r^\epsilon - \sC_r^\epsilon ) &= \langle \bL, \bQ\rangle - n + \langle \bxi'(\bQ_1), \bL_1^{-1}  \rangle - \langle \bQ_1, \bd_1^{-1}(\epsilon) \rangle  +\epsilon \langle \bd_{r - 1}^{-1}(\epsilon), \bE_{r-1} \rangle - \epsilon \langle \bxi'(\bQ_{r - 1}) , \bE_{r-1} \rangle \notag
	\\&\quad - \langle \bd_1(\epsilon), \bxi'(\bQ_1)  \rangle + \langle \bd_{r - 1}(\epsilon), \bxi'(\bQ_{r - 1}) \rangle - \langle \bd^{-1}_{r - 1}(\epsilon), \bQ_{r- 1}  \rangle + \langle \bd_{1}^{-1}(\epsilon), \bQ_1 \rangle \notag
	\\&\quad - \langle \bQ, \bxi' (\bQ) \rangle + \langle \bQ_{r-1} , \bxi' (\bQ_{r-1}) \rangle \notag 
	\\& \stackrel{\mathclap{\eqref{eq:lwbd3}}}{=} \langle \bL, \bQ \rangle - \tr(\bI) + \epsilon \langle \bd_{r - 1}^{-1}(\epsilon), \bE_{r-1} \rangle - \epsilon \langle \bxi'(\bQ_{r - 1}) , \bE_{r-1} \rangle  \notag
	\\&\quad + \langle \bd_{r - 1}(\epsilon), \bxi'(\bQ_{r - 1}) \rangle - \langle \bd^{-1}_{r - 1}(\epsilon), \bQ_{r- 1} \rangle - \langle \bQ, \bxi' (\bQ) \rangle + \langle \bQ_{r-1} , \bxi' (\bQ_{r-1}) \rangle \notag
	\\&= \langle \bL, \bQ \rangle - \tr(\bI) +\epsilon \langle\bd_{r - 1}^{-1}(\epsilon), \bE_{r-1} \rangle + \langle \bQ, \bxi'(\bQ_{r - 1}) \rangle - \langle\bd^{-1}_{r - 1}(\epsilon),  \bQ_{r- 1} \rangle  - \langle \bQ , \bxi'(\bQ) \rangle \label{eq:lwbdremainingterms}.
	\end{align}
	since $\bd_{r - 1}(\epsilon) = \bQ - \bQ_{r - 1} + \epsilon \bE_{r-1}$. We will show that the $\langle \bd^{-1}_{r - 1}(\epsilon),\bQ_{r- 1} \rangle$ term cancels all the remaining terms. Using the critical point condition and the definitions of $\bL_{r - 1}$ defined in \eqref{def:L} and $\bd_{r - 1}(\epsilon)$ defined in \eqref{def:D} and \eqref{eq:lwbderror}, we get
	\begin{align*}
	\bd^{-1}_{r - 1}(\epsilon) \bQ_{r- 1} &= \bd_{r  - 1}^{-1}(\epsilon) ( -\bd_{r - 1}(\epsilon) + \bQ - \epsilon \bE_{r - 1} )
	\\&= - \bI +  \bd_{r - 1}^{-1}(\epsilon) \bQ + \epsilon \bd_{r - 1}^{-1}(\epsilon) \bE_{r-1}
	\\&\stackrel{\mathclap{\eqref{eq:lwbd3}}}{=} - \bI +  (\bL - \bxi'(\bQ) + \bxi'(\bQ_{r - 1})) \bQ + \epsilon \bd_{r - 1}^{-1}(\epsilon) \bE_{r-1}.
	%\bd^{-1}_{r - 1}(\epsilon) \bQ_{r- 1} &= (\bQ - \bQ_{r - 1} + \epsilon \bE_{r-1})^{-1} \bQ_{r - 1} 
	%\\&= (\bQ - \bQ_{r - 1}+ \epsilon \bE_{r-1})^{-1} (\bQ_{r - 1} - \bQ + \bQ + \epsilon \bE_{r-1} - \epsilon \bE_{r-1}) 
	%\\&= - \bI +  \bd_{r - 1}^{-1}(\epsilon) \bQ + \epsilon \bd_{r - 1}^{-1}(\epsilon) \bE_{r-1}
	%\\&\stackrel{\mathclap{\eqref{eq:lwbd3}}}{=} - \bI +  (\bL - \bxi'(\bQ) + \bxi'(\bQ_{r - 1})) \bQ + \epsilon \bd_{r - 1}^{-1}(\epsilon) \bE_{r-1}.
	\end{align*}
	Taking the trace and using the fact $\tr(\bA \bB) = \tr (\bB \bA)$ implies 
	\begin{equation}\label{eq:tracep}
	\langle \bd^{-1}_{r - 1}, \bQ_{r- 1} \rangle = -\tr(\bI) + \langle \bL, \bQ \rangle - \langle \bQ, \bxi'(\bQ) \rangle + \langle \bQ , \bxi'(\bQ_{r - 1}) \rangle + \epsilon \langle \bd_{r - 1}^{-1}(\epsilon), \bE_{r-1} \rangle.
	\end{equation}
	Substituting \eqref{eq:tracep} into \eqref{eq:lwbdremainingterms} cancels out all remaining terms, so
	\[
	2 \big(\sP_r^\epsilon( \bL ,\vbQ) - \sC_r^\epsilon(\vbQ) \big) = 0.
	\]
\end{proof}

\subsection{Removing the Error Terms}We now bound the minimum of the perturbed functional $\sC_r^\epsilon(\vbQ)$ defined in \eqref{eq:approxcs} with $\sC_r$ evaluated at a different path of matrices.  We can't simply send $\epsilon \to 0$ to remove the error terms, because we do not know that $\epsilon \be_k \to \bm 0$ since $\be_k$ depends on $\epsilon$. Consider the monotone path encoded by the sequences
\begin{equation}\label{eq:xqseqtildelwbd}
\begin{linsys}{8}
x_{-1} &=& 0 &= &x_{0} &< &  x_{1} &<& \dots&< &x_{r - 2}  &< &x_{r - 1} &=& 1 \\
&& \bm 0 &= &\tilde\bQ_0 & < & \tilde\bQ_1  &<& \dots&< &\tilde\bQ_{r - 2} &< &\tilde\bQ_{r-1} &<& \tilde\bQ_r &=& \bQ  
\end{linsys}
\end{equation}
where $\tilde \bQ_p = \bQ_p + \epsilon \be_p$ for $1 \leq p \leq r$. We first note that $(\tilde \bQ_k)_{k = 1}^{r} \in \cQ_r$. By definition, 
\begin{equation}\label{eq:newd}
\tilde \bd_p := \sum_{p \leq k \leq r-1} x_k ( \tilde\bQ_{k + 1} - \tilde\bQ_{k}  ) = \sum_{p \leq k \leq r-1} x_k ( \bQ_{k + 1} - \bQ_{k}  ) + \epsilon \sum_{p \leq k \leq r-1} x_k ( \be_{k + 1} - \be_{k}  )  = \bd_p(\epsilon).
\end{equation}
Since $|\bQ_k - \bQ_{k - 1}| > 0$ implies $|\bxi'(\bQ_k) - \bxi'(\bQ_{k - 1}) | > 0$ [\ref{app:properties}, Proposition~\ref{prop:gapxi}], the critical point condition \eqref{eq:critptlwbd} implies the path $(\tilde \bQ_k)_{k = 1}^{r}$ has positive definite increments for $1 \leq k \leq r-1$,
\[
x_k(\tilde{\bQ}_{k + 1} - \tilde \bQ_k ) = \tilde\bd_{k} - \tilde\bd_{k+1} = \bL_k^{-1} - \bL_{k + 1}^{-1} > 0 .
\]
The boundary conditions are also satisfied since $\be_r = 0$ implies that $\tilde \bQ_{r} = \bQ_{r} + \be_r = \bQ$ and the critical point condition for $\bQ_1$ \eqref{eq:lwbd1cond} implies that
\[
\tilde \bQ_1 = \bQ_1 + \epsilon \be_1 = \bL_1^{-1} ( \vh \vh^\trans + \bxi'(\bQ_1) ) \bL_1^{-1} > 0.
\]

Using convexity, we will prove that the perturbed functional $\sC_r^\epsilon$ can be lower bounded by $\sC_r$ evaluated at the path encoded by \eqref{eq:xqseqtildelwbd},
\begin{equation}\label{eq:lwbdlwbd}
\sC^\epsilon_r((\bQ_k)_{k = 1}^r ) \geq \sC_r( \vex, (\tilde \bQ_k)_{k = 1}^{r})
\end{equation}
provided that $ (\bQ_k)_{k = 1}^r$ satisfies the critical point conditions \eqref{eq:critptlwbd}. Since the sequences of matrices $(\tilde \bQ_k)_{k = 1}^{r}$ is in $\cQ_r$, we get the obvious lower bound,
\[
\sP^\epsilon_r (\bL, (\bQ_k)_{k = 1}^r) = \sC^\epsilon_r(\vex, (\bQ_k)_{k = 1}^r) \geq \sC_r(\vex, (\tilde \bQ_k)_{k = 1}^{r} ) \geq  \inf_{r,x,Q} \sC_r(\vex,\vbQ).
\]
The lower bound does not depend on the discretization $r$, $\epsilon$, nor the fixed sequence \eqref{eq:xseqlwbd}. Therefore, we can minimize the upper bound over sequences \eqref{eq:xseqlwbd}, $r$ and $\epsilon$ to prove the required lower bound,
\[
\inf_{r, \Lambda, x,Q} \sP_r(\bL,\vex,\vbQ) \geq \inf_{r,x,Q} \sC_r(\vex,\vbQ) .
\]

We now prove the lower bound \eqref{eq:lwbdlwbd}. 
\begin{lem}\label{lem:lwbdconcave}
	For all $\epsilon > 0$, if $\vbQ$ satisfies the critical point conditions \eqref{eq:critptlwbd}, then 
	\[
	\sC_r^\epsilon((\bQ_k)_{k = 1}^r) \geq \sC_r( \vex,  (\tilde \bQ_k)_{k = 1}^{r} ). 
	\]
\end{lem}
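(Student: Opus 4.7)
The plan is to compare $\sC_r^\epsilon((\bQ_k))$ and $\sC_r(\vex,(\tilde\bQ_k))$ term by term, exploiting the identity $\tilde\bd_p = \bd_p(\epsilon)$ from \eqref{eq:newd}. This identity makes the external-field contribution $\langle \vh\vh^\trans,\cdot\rangle$, the top log-determinant $\tfrac{1}{x_{r-1}}\log|\bd_{r-1}|$, and the telescoping log-ratio terms match exactly in both functionals. What remains to compare are (i) the energy sums $\sum_k x_k\Sum(\bxi(\bQ_{k+1})-\bxi(\bQ_k))$ versus $\sum_k x_k\Sum(\bxi(\tilde\bQ_{k+1})-\bxi(\tilde\bQ_k))$; (ii) the cross term, whose difference equals $\langle \bQ_1,\bd_1^{-1}(\epsilon)\rangle - \langle \tilde\bQ_1,\tilde\bd_1^{-1}\rangle = -\epsilon\langle\be_1,\bd_1^{-1}(\epsilon)\rangle$; (iii) the extra $\epsilon$-corrections already carried inside $\sC_r^\epsilon$; and (iv) the barrier $-\epsilon\sum_{k=0}^{r-1}\log|\bQ_{k+1}-\bQ_k|$, which is nonnegative since $|\bQ_{k+1}-\bQ_k|\leq 1$ by the AM--GM bound on determinants.

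For (i), I would invoke the coordinate-wise convexity of $\bxi$ asserted after \eqref{eq:ximatrix}: for each $j$ and each pair of indices the inequality $\bxi(\tilde\bQ_j)_{kl} \geq \bxi(\bQ_j)_{kl} + \bxi'(\bQ_j)_{kl}(\tilde\bQ_j - \bQ_j)_{kl}$ holds, so summing gives $\Sum(\bxi(\tilde\bQ_j)) - \Sum(\bxi(\bQ_j)) \geq \epsilon\langle \bxi'(\bQ_j),\be_j\rangle$. An Abel summation using the endpoint conditions $\bQ_r = \tilde\bQ_r = \bQ$, $x_0 = 0$ and $x_{r-1}=1$ then rewrites $\sum_{k=1}^{r-1} x_k \Sum(\bxi(\bQ_{k+1})-\bxi(\bQ_k)) = \Sum(\bxi(\bQ)) - \sum_{j=1}^{r-1}(x_j - x_{j-1})\Sum(\bxi(\bQ_j))$, so the gap in (i) is at least $\epsilon\sum_{j=1}^{r-1}(x_j - x_{j-1})\langle\bxi'(\bQ_j),\be_j\rangle$. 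Combining this with (ii) and (iii) and dropping the nonnegative barrier (iv), the lemma reduces to proving that the linear-in-$\be$ form
\[
-\langle \be_1,\bd_1^{-1}(\epsilon)\rangle + \sum_{j=1}^{r-1}(x_j - x_{j-1})\langle \bxi'(\bQ_j),\be_j\rangle + \bigl(\text{extra terms of } \sC_r^\epsilon \text{, with the prefactor } \epsilon \text{ stripped}\bigr)
\]
vanishes at the critical point.

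The main step is then a coefficient-by-coefficient algebraic verification. Using the identities $\bE_k - \bE_{k+1} = x_k(\be_{k+1} - \be_k)$ and $\be_r = \bm 0$, the extra terms rewrite as sums of $\langle \bd_{k+1}^{-1}(\epsilon),\be_{k+1}-\be_k\rangle$ and $x_k\langle \bxi'(\bQ_{k+1}),\be_{k+1}-\be_k\rangle$ together with two endpoint pieces involving $\bE_{r-1}$. Collecting the coefficient of each $\be_j$ and plugging in the critical point relations $\bd_p^{-1}(\epsilon) = \bL_p$ and $\bL_{k+1} - \bL_k = x_k(\bxi'(\bQ_{k+1}) - \bxi'(\bQ_k))$ from Lemma~\ref{lem:lwbdcritpt}, the telescoping difference $\bd_{j+1}^{-1}(\epsilon) - \bd_j^{-1}(\epsilon)$ exactly cancels the weighted $\bxi'$-difference, so every coefficient collapses to zero. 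The principal obstacle is this final bookkeeping: the boundary cases $j=1$ (where there is no $k=0$ contribution and the $-\bd_1^{-1}(\epsilon)$ coming from (ii) must be absorbed into the bulk cancellation) and $j=r-1$ (where $x_{r-1}=1$, $\be_r=\bm 0$, and the unmatched endpoint pieces $-\epsilon\langle\bd_{r-1}^{-1}(\epsilon),\bE_{r-1}\rangle + \epsilon\langle\bxi'(\bQ_{r-1}),\bE_{r-1}\rangle$ appear) each close separately. These endpoint pieces appear to be precisely engineered in the definition of $\sC_r^\epsilon$ to make the cancellation go through, leaving only the nonnegative barrier and thus $\sC_r^\epsilon((\bQ_k)) \geq \sC_r(\vex,(\tilde\bQ_k))$.
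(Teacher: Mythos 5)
Your proof is correct and follows essentially the same route as the paper: match the external-field and log-determinant terms via $\tilde\bd_p=\bd_p(\epsilon)$, drop the nonnegative barrier, use the coordinate-wise tangent (convexity) inequality for $\bxi$, and cancel the remaining linear-in-$\be_k$ terms using $\bE_k-\bE_{k+1}=x_k(\be_{k+1}-\be_k)$, $\be_r=\bm 0$ and the critical point identities $\bd_p^{-1}(\epsilon)=\bL_p$, $\bL_{k+1}-\bL_k=x_k(\bxi'(\bQ_{k+1})-\bxi'(\bQ_k))$. The only difference is organizational---you apply convexity to the energy gap first and then verify the exact coefficient-by-coefficient cancellation, while the paper folds the $\epsilon$-terms into the Abel-summed energy expression before invoking convexity---and your bookkeeping, including the $j=1$ and $j=r-1$ endpoint cases, does check out.
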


\begin{proof}
	Since $\tilde \bd_p =  \bd_p(\epsilon)$ and the barrier $\sE_r \geq 0$, it remains to show that
	\begin{align}
	&\langle \bQ_1 , \bd_1^{-1}(\epsilon) \rangle + \sum_{1 \leq k \leq r-1} x_k \cdot \Sum \big( \bxi (\bQ_{k + 1}) - \bxi (\bQ_{k})\big) -\epsilon \langle \bd_{r - 1}^{-1}(\epsilon) , \bE_{r-1} \rangle + \epsilon \langle \bxi'(\bQ_{r - 1}), \bE_{r-1} \rangle \notag\\
	&\quad - \epsilon \sum_{1 \leq k \leq r-2} \langle \bE_{k+1} - \bE_k, \bxi' (\bQ_{k + 1}) \rangle - \sum_{1 \leq k \leq r-2} \frac{\epsilon}{x_{k}} \langle \bd_{k+1}^{-1}(\epsilon), \bE_{k} -  \bE_{k+1} \rangle \label{eq:reduclwbderror}
	\end{align}
	is bounded below by
	\begin{align*}
	\quad& \langle \tilde\bQ_1, \tilde\bd_1^{-1} \rangle + \sum_{1 \leq k \leq r-1} x_k \cdot \Sum \big( \bxi (\tilde \bQ_{k + 1}) - \bxi (\tilde \bQ_{k})\big) 
	\\&= \langle \tilde\bQ_1, \tilde\bd_1^{-1} \rangle  +  \sum_{1 \leq k \leq r-1} (x_{k - 1} - x_{k})  \Sum ( \bxi (\tilde\bQ_{k})) + x_{r - 1} \Sum( \bxi(\tilde\bQ_r) ).
	\end{align*}
	
	We will use convexity of the $\bxi$ terms to absorb the $\epsilon$ error terms in \eqref{eq:reduclwbderror}. The definition of $\bE_k$ in \eqref{eq:lwbderror} implies that
	\begin{equation}\label{eq:lwbd4}
	\bE_k - \bE_{k+1} = x_k (\be_{k + 1} - \be_{k}) \qquad 1 \leq k \leq r- 1. 
	\end{equation}
	Using summation by parts and \eqref{eq:lwbd4} the last four $\epsilon$ terms in \eqref{eq:reduclwbderror} can be simplified to
	\begin{align*}
	&\quad - \epsilon \sum_{1 \leq k \leq r-2} \langle \bE_{k+1} - \bE_k , \bxi' (\bQ_{k + 1}) \rangle - \epsilon \sum_{1 \leq k \leq r-2} \frac{1}{x_{k}}  \langle \bd_{k+1}^{-1}(\epsilon) ,  \bE_{k} -  \bE_{k+1} \rangle \\
	&\quad -\epsilon \langle \bd_{r - 1}^{-1}(\epsilon), \bE_{r-1} \rangle + \epsilon \langle \bxi'(\bQ_{r - 1}), \bE_{r-1} \rangle  
	\\&\stackrel{\mathclap{\eqref{eq:lwbd3} \eqref{eq:lwbd4} }}{=} \epsilon \sum_{1 \leq k \leq r-2} \langle x_k ( \be_{k+1} - \be_k ) ,  \bxi' (\bQ_{k + 1}) \rangle - \epsilon \sum_{1 \leq k \leq r-2} \langle \bL_{k+1} , \be_{k+1} -  \be_{k} \rangle\\
	&\quad -\epsilon \langle  \bL_{r - 1}, \bE_{r-1} \rangle + \epsilon \langle \bxi'(\bQ_{r - 1}), \bE_{r-1} \rangle
	\\&= \epsilon \sum_{1 \leq k \leq r-2} \langle  x_k ( \be_{k+1} - \be_k ), \bxi' (\bQ_{k + 1}) \rangle  +\epsilon \sum_{1 \leq k \leq r-2} \langle \bL_{k+1} - \bL_{k} , \be_{k} \rangle \\
	&\quad- \epsilon \langle \bL_{r- 1} , \be_{r-1} \rangle + \epsilon \langle \bL_1 , \be_1 \rangle -\epsilon \langle \bL_{r - 1} , \bE_{r-1} \rangle + \epsilon \langle  \bxi'(\bQ_{r - 1}), \bE_{r-1} \rangle 
	\\&\stackrel{\mathclap{\eqref{eq:lwbd1}}}{=} \epsilon \sum_{1 \leq k \leq r-2}\langle x_k ( \be_{k+1} - \be_k ) , \bxi' (\bQ_{k + 1}) \rangle +\epsilon \sum_{1 \leq k \leq r-2} \langle x_k(\bxi'(\bQ_{k + 1}) -  \bxi'(\bQ_{k}) ) , \be_{k} \rangle\\
	&\quad- \epsilon \langle \bL_{r- 1} , \be_{r-1} \rangle + \epsilon \langle \bL_1, \be_1 \rangle  -\epsilon \langle \bL_{r - 1} , \bE_{r-1} \rangle + \epsilon \langle  \bxi'(\bQ_{r - 1}) , \bE_{r-1} \rangle
	\\&= \epsilon \sum_{1 \leq k \leq r-1} x_k \big(  \langle  \bxi' (\bQ_{k + 1}) , \be_{k + 1} \rangle - \langle \bxi' (\bQ_{k}),  \be_k \rangle \big) + \epsilon \langle \bL_1, \be_1 \rangle
	\end{align*}
	since $\be_r = 0$ and $\bE_{r - 1} = \be_r - \be_{r - 1} = -\be_{r- 1}$. Therefore, excluding the leftover $\langle \bQ_1, \bd_1^{-1}(\epsilon) \rangle + \epsilon \langle \bL_1, \be_1 \rangle$ term, \eqref{eq:reduclwbderror} is equal to
	\begin{align}
	&\quad\sum_{1 \leq k \leq r-1} x_k \cdot \Sum \big( \bxi (\bQ_{k + 1}) - \bxi (\bQ_{k})\big) + \epsilon \sum_{1 \leq k \leq r-1} x_k \cdot \big( \langle \bxi' (\bQ_{k + 1}) , \be_{k + 1} \rangle - \langle \bxi' (\bQ_{k}) , \be_k  \rangle \big) \notag
	\\&= \sum_{1 \leq k \leq r-1} (x_{k - 1} - x_{k})  \Sum ( \bxi (\bQ_{k}) + \epsilon \bxi' (\bQ_{k}) \odot  \be_{k} ) + x_{r - 1} \Sum( \bxi(\bQ_r + \epsilon \be_r) ). \label{eq:bdconvex}
	\end{align}
	
	Since $\bxi(\bA)$ is convex [\ref{app:properties}, Proposition~\ref{prop:xiconvex}] and $(x_{k-1} - x_{k}) \leq 0$, we also have
	\begin{equation}\label{eq:cvx1}
	(x_{k-1} - x_{k})\Sum( \bxi (\tilde\bQ_{k}) ) = (x_{k-1} - x_{k})\Sum( \bxi (\bQ_{k} + \epsilon \be_k) ) \leq (x_{k-1} - x_{k})\Sum( \bxi (\bQ_{k} ) + \epsilon \bxi'(\bQ_{k}) \odot \be_k ).
	\end{equation}
	Furthermore, the leftover terms satisfy
	\begin{equation}\label{eq:cvx2}
	\langle \bd_1^{-1}(\epsilon) , \bQ_1 \rangle + \epsilon \langle \bL_1 , \be_1 \rangle = \langle \tilde\bd_1^{-1}, \bQ_1 \rangle +  \langle \tilde\bd_1^{-1} , \epsilon \be_1 \rangle = \langle \tilde\bd_1^{-1} , \tilde\bQ_1 \rangle.
	\end{equation}
	Applying \eqref{eq:cvx1} and \eqref{eq:cvx2} to \eqref{eq:bdconvex} and the left over terms implies that \eqref{eq:reduclwbderror} is bounded below by
	\[
	\langle \tilde\bQ_1, \tilde\bd_1^{-1} \rangle  +  \sum_{1 \leq k \leq r-1} (x_{k - 1} - x_{k})  \Sum ( \bxi (\tilde\bQ_{k})) + x_{r - 1} \Sum( \bxi(\tilde\bQ_r) ),
	\]
	which is what we needed to show.
\end{proof}

\subsection{Summary of the Proof} We now summarize the proof of the lower bound.

\begin{proof}[Proof of Lemma~\ref{lem:lwbd}]
	Assuming that $\vb_2 > 0$, for $\epsilon > 0$ and fixed sequence \eqref{eq:xseqlwbd}, the minimizer $\bL^\epsilon$, $\vbQ^\epsilon$ of  $\sP_r^\epsilon$ satisfies the critical point conditions \eqref{eq:critptlwbd} by Lemma~\ref{lem:lwbdcritpt}. From Lemma~\ref{lem:lwbdreduction} and Lemma~\ref{lem:lwbdconcave}, these critical point conditions results in the following chain of inequalities,
	\[
	\inf_{\Lambda,Q} \sP_r^\epsilon(\bL,\vbQ) = \sP^\epsilon_r( \bL^\epsilon,\vbQ^\epsilon) = \sC_r^\epsilon(\vbQ^\epsilon) \geq \inf_{r,x,Q} \sC_r(\vex,\vbQ).
	\]
	Since $\sP^\epsilon_r( \bL,\vbQ)$ is decreasing in $\epsilon$ for fixed $\bL$ and $\vbQ$ and $\sP_r( \bL,\vbQ)$ is continuous, we can interchange the limit with the infimum [\ref{app:properties}, Proposition~\ref{prop:interchangeinf}], so
	\[
	\lim_{\epsilon \to 0} \inf_{\Lambda,Q}  \sP_r^\epsilon( \bL,\vbQ) = \inf_{\Lambda,Q} \lim_{\epsilon \to 0} \sP_r^\epsilon(\bL,\vbQ)  =  \inf_{\Lambda,Q} \sP_r(\bL,\vbQ) \geq \inf_{r,x,Q} \sC_r(\vex,\vbQ).
	\]
	The lower bound does not depends on $r$ nor the sequence \eqref{eq:xseqlwbd}, so we can take the infimum of $\sP_r$ over all sequences of the form \eqref{eq:xseqlwbd} and all discretizations to finish the proof of the lower bound.
	
	This proves the case of the lower bound under the additional assumption that $\beta_2 > 0$. To conclude the general case, suppose that $(\vb_p)_{p \geq 2}$ is a sequence of positive inverse temperature parameters such that $\beta_p = 0$ if $p$ is odd. We can modify the temperature by adding a small positive perturbation to the second term, $(\vb_p^\delta)_{p \geq 2} = (\vb_2 + \delta \vec{1}, \vb_4, \dots)$. Consider $\sP_r^\delta$ and $\sC_r^{\delta}$ defined with respect to $(\vb_p^\delta)_{p \geq 2}$. We have
	\[
	\inf_{r,\Lambda,x,Q} \sP_r^\delta(\bL,\vex,\vbQ) \geq \inf_{r,x,Q} \sC_r^\delta(\vex,\vbQ).
	\] 
	This holds for all $\delta > 0$, so we can use the fact that both $\inf\sP^\delta_r$ and $\inf\sC^\delta_r$ are uniformly continuous functions of the temperature [\ref{app:properties}, Proposition~\ref{prop:unifcontC}] and send $\delta \to 0$ to conclude
	\[
	\inf_{r,\Lambda,x,Q} \sP_r(\bL,\vex,\vbQ) \geq \inf_{r,x,Q} \sC_r(\vex,\vbQ).
	\] 
\end{proof}

\begin{rem}
	The exact formula for the error terms $\be_k$ was not needed in our computations. We are free to choose any barrier $\sE_r$ that assigns infinitely large penalties to degenerate increments to prove the lower bound. The logarithmic barrier was chosen because its derivatives are easy to compute explicitly. 
\end{rem}

\section{The Upper Bound of the Parisi Functional}\label{sec:upbd}

We now use a similar procedure to prove the matching upper bound. To simplify notation, several terms such as $\sP_r^\epsilon$,  $\sC_r^\epsilon$, and $\bE$ that appeared Section~\ref{sec:lwbd} will be redefined in this section. In this section, we will prove that the infimum of the Parisi Functional is a lower bound of the Crisanti--Sommers functional:
\begin{lem}\label{lem:upbd} For any positive definite constraint $\bQ$, we have
	\[
	\inf_{r,\Lambda,x,Q} \sP_r (\bL,\vex, \vbQ) \leq \inf_{r,x,Q} \sC_r(\vex, \vbQ),
	\]
	where the first infimum is over sequences \eqref{eq:xqseq} and $\bL \in \bS_+^n$ such such that $|\bL_1| > 0$ and the second infimum is over sequences \eqref{eq:xqseq} such that $|\bd_{r - 1}| > 0$.
\end{lem}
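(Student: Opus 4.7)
The proof mirrors the strategy of Lemma~\ref{lem:lwbd} with the roles of $\sP_r$ and $\sC_r$ interchanged. As in Section~\ref{sec:lwbd}, we may assume $\vb_2 > 0$ (using uniform continuity of the infima in the temperatures) and $\vex$ strictly increasing with $x_{r-1} = 1$, and it suffices to prove $\inf_{\vbQ} \sC_r(\vex,\vbQ) \geq \inf_{r,\Lambda,x,Q} \sP_r(\bL,\vex,\vbQ)$ for each fixed admissible $\vex$. To this end we add the same logarithmic barrier and consider
\[
\sC_r^\epsilon(\vbQ) := \sC_r(\vex,\vbQ) + \epsilon \sE_r(\vbQ), \qquad \sE_r(\vbQ) := -\sum_{0 \leq k \leq r-1} \log|\bQ_{k+1} - \bQ_k| \geq 0.
\]
The barrier forces the minimizer $\vbQ^\epsilon$, which exists by compactness, to lie in the interior of $\cQ_r$, so symmetric matrix variations of each $\bQ_p$ are admissible.

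The interior critical point equations for $\sC_r^\epsilon$ will be derived by setting the directional derivative of $\sC_r^\epsilon$ in every symmetric direction equal to $0$, exactly as in Section~\ref{sec:lwbd}. Computing the matrix derivatives via $\partial \bd_k / \partial \bQ_p$ and the standard identity $\partial \log|\bA| = \langle \bA^{-1}, \cdot \rangle$, the resulting system can be rearranged to produce an explicit $\bL^\epsilon \in \bS_+^n$ such that the induced sequence $(\bL^\epsilon_p)$ via \eqref{def:L} satisfies $\bL^\epsilon_p = \bd_p^{-1}$ modulo $O(\epsilon)$ corrections built from Hadamard ratios of $(\bQ_{p+1} - \bQ_p)^{-1}$ by $\bxi''(\bQ_p)$, dual to the matrices $\be_p$ in \eqref{eq:errorlwbd}. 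This identification is the upper-bound analogue of Lemma~\ref{lem:lwbdcritpt}, and it allows us to invoke the algebraic reduction from the proof of Lemma~\ref{lem:lwbdreduction} (which uses only the relation $\bL_p^{-1} \approx \bd_p$ together with the defining recursions \eqref{def:L} and \eqref{def:D}) to obtain
\[
\sC_r^\epsilon(\vbQ^\epsilon) = \sP_r^\epsilon(\bL^\epsilon, \vbQ^\epsilon),
\]
where $\sP_r^\epsilon$ is the Parisi functional plus the logarithmic barrier plus analogous $\epsilon$-error terms.

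Next, we will construct a shifted path $\tilde \vbQ^\epsilon$ in the spirit of \eqref{eq:xqseqtildelwbd}, whose increments are positive definite by the critical point conditions. Because $\bxi$ enters $\sP_r$ through the terms $-x_k \Sum(\btheta(\bQ_{k+1}) - \btheta(\bQ_k))$ with $\btheta(\bA) = \bA \odot \bxi'(\bA) - \bxi(\bA)$ and through $\langle \bxi'(\bQ_1), \bL_1^{-1}\rangle$, its sign structure is opposite to that of the $\bxi$ terms appearing in $\sC_r$; hence the same convexity inequality $\bxi(\bQ + \epsilon \be) \geq \bxi(\bQ) + \epsilon \bxi'(\bQ) \odot \be$, now oriented the other way, together with $\sE_r \geq 0$, will yield
\[
\sP_r^\epsilon(\bL^\epsilon, \vbQ^\epsilon) \geq \sP_r(\bL^\epsilon, \vex, \tilde \vbQ^\epsilon) \geq \inf_{r,\Lambda,x,Q} \sP_r(\bL,\vex,\vbQ).
\]

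Chaining these inequalities and interchanging the limit $\epsilon \downarrow 0$ with the infimum in $\vbQ$ via Proposition~\ref{prop:interchangeinf} (the monotonicity $\sC_r^\epsilon \downarrow \sC_r$ on the interior is immediate) gives $\inf_{\vbQ} \sC_r(\vex,\vbQ) \geq \inf_{r,\Lambda,x,Q} \sP_r$ for each fixed $\vex$ of the form \eqref{eq:xseqlwbd}. Minimizing over $\vex$ and $r$, then removing the $\vb_2 > 0$ assumption via uniform continuity of the infima in the temperatures, completes the proof. The main obstacle is the reduction step: the identification $\bL_p^{-1} = \bd_p$ is only approximate in this setting, so the $\epsilon$-corrections must be tracked carefully through the lengthy algebra of Lemma~\ref{lem:lwbdreduction} and then absorbed by the convexity step without destroying the positive definiteness of the increments of $\tilde \vbQ^\epsilon$.
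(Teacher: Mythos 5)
Your outline tracks the paper through the barrier, the interior critical point equations, and the algebraic reduction $\sC_r^\epsilon(\vbQ^\epsilon)=\sP_r^\epsilon(\bL^\epsilon,\vbQ^\epsilon)$ (Lemmas~\ref{lem:upbdcritpt} and~\ref{lem:upbdreduction}), apart from a small inaccuracy: in this direction the first variation of $\Sum(\bxi(\bQ_p))$ is simply $\langle \bxi'(\bQ_p),\bC\rangle$, so the error matrices are those of \eqref{eq:upbdeterm}, with no Hadamard division by $\bxi''$, and the auxiliary assumption $\vb_2>0$ is not needed here at all.

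The genuine gap is in your error-removal step. You propose to shift the path, $\tilde\bQ_k=\bQ_k+\epsilon(\cdot)$, and claim that ``the same convexity inequality, oriented the other way'' gives $\sP_r^\epsilon(\bL^\epsilon,\vbQ^\epsilon)\geq \sP_r(\bL^\epsilon,\vex,\tilde\vbQ^\epsilon)$. This is exactly the step the paper flags as failing: after summation by parts the path-dependent terms of $\sP_r$ read $\sum_k (x_k-x_{k-1})\Sum(\btheta(\bQ_k))$ with \emph{nonnegative} weights on the \emph{convex} functions $\Sum(\btheta(\cdot))$, so the tangent-plane inequality only bounds these terms from \emph{below} at a shifted argument, which is the wrong direction; in the Crisanti--Sommers reduction the analogous weights $(x_{k-1}-x_k)$ were nonpositive, which is precisely why Lemma~\ref{lem:lwbdconcave} worked. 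Moreover, the $\epsilon$-errors in \eqref{eq:approxparisi} are paired with $\bQ_k$ and with $\bL_k^{-1}(\epsilon)$, not with $\bxi'(\bQ_k)$, and the path enters $\sP_r$ nonlinearly through the matrices $\bL_k$ (hence through $\log|\bL_k|$, $\bL_1^{-1}$, and $\langle\bL,\bQ\rangle$), so a shift of $\vbQ$ perturbs the log-determinant and inverse terms in a way no convexity statement about $\bxi$ controls; you would also still need to verify monotonicity and the endpoint condition for $\tilde\vbQ^\epsilon$. The paper's resolution is different: keep the minimizing path fixed and shift the Lagrange multiplier, $\tilde\bL=\bL+\epsilon\bE_1$, absorbing the error terms via concavity of the log determinant (Proposition~\ref{prop:logconcave}) together with linearity of the trace, which yields $\sP_r^\epsilon(\bL,\vbQ)\geq\sP_r(\tilde\bL,\vex,\vbQ)$ (Lemma~\ref{lem:upbdconcave}). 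Without this substitute mechanism, the crucial link in your chain of inequalities is unjustified.
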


Like the lower bound, we prove this by examining the behavior of $\sC_r$ at its critical points. We will perturb $\sC_r$ by adding a logarithmic penalty at the boundary to force the minimizer of $\sC_r$ to have positive increments. The minimizers will satisfy an interior critical point condition that will allow us to reduce the perturbed functional $\sC_r^\epsilon$  into a perturbed $\sP_r^\epsilon$ functional.  These perturbed functionals will converge to $\sP_r$ and $\sC_r$ in the limit as the size of the barrier tends to $0$. The main difference is the convexity argument used in the proof of Lemma~\ref{lem:lwbdconcave} does not work in this direction. Instead, we use a concavity argument to absorb the error terms into the Lagrange multiplier term. 

\subsection{Adding a Positive Definite Barrier}We fix $r \geq 2$ and let $\vbQ = (\bQ_k)_{k = 0}^r$ denote the monotone sequence of matrices. We will add a logarithmic barrier to $\sC_r$ that introduces a large penalty when $\vbQ$ is not strictly increasing. Let $\epsilon > 0$ and consider the barrier term
\[
\sE( \vbQ ) := - \epsilon \sum_{0 \leq k \leq r - 1} \log |\bQ_{k + 1} - \bQ_k|.
\]
Since $| \bA | \leq (\frac{\tr(A)}{n})^n$ [\ref{app:properties}, Proposition~\ref{prop:AMGM}] we have $|\bQ_{k + 1} - \bQ_k| \leq 1$ so $\sE_r \geq 0$. Furthermore, $\sE \to + \infty$ if $|\bQ_{k + 1} - \bQ_k| \to 0$ for some $0 \leq k \leq r-1$. 

For a fixed strictly increasing path such that
\begin{equation}\label{eq:xsequpbd}
0 = x_0 < x_1 < \dots < x_{r-1} = 1,
\end{equation}
we define the functional,
\begin{align}
\sC_r^\epsilon(\vbQ) &= \frac{1}{2}\Big[ \log| \bQ - \bQ_{r - 1}| + \langle\vh \vh^\trans, \bd_1  \rangle - \sum_{1 \leq k \leq r-2} \frac{1}{x_k} \log \frac{|\bd_{k + 1}|}{|\bd_k|} + \langle \bd_1^{-1}, \bQ_1 \rangle \notag \\
&\quad  + \sum_{1 \leq k \leq r-1} x_k \cdot \Sum \big( \bxi (\bQ_{k + 1}) - \bxi (\bQ_{k})\big)  - \epsilon \sum_{0 \leq k \leq r - 1}  \log | \bQ_{k + 1} - \bQ_k | \Big] \label{eq:pertcrisanti}.
\end{align}
Notice that $\sC^\epsilon_r = \sC_r + \epsilon \sE_r$ decreases pointwise to $\sC(\vex, \vbQ)$ as $\epsilon \to 0$, where $\vex$ is the monotone sequence \eqref{eq:xsequpbd}. The barrier term forces the minimizer to lie in the interior of the positive definite cone, since $\sC_r^\epsilon( \vbQ ) \to +\infty$ if one of the increments $|\bQ_{k + 1} - \bQ_k| \to 0$. We now examine the behavior of $\sC_r^\epsilon(\vbQ)$ at its minimizers and recover a system of critical point equations. 

\subsection{Critical Point Conditions} We will study the first variation of $\sC_r^\epsilon$ to recover critical point conditions for its minimizer. We want to minimize the function
\[
\sC_r^\epsilon(\vbQ) := \sC_r^\epsilon ( \bQ_1, \dots, \bQ_{r - 1} )
\]
over the parameters
\[
(\bQ_k)_{k = 1}^{r-1} \in \cQ_r := \big\{ \bQ_1, \dots, \bQ_{r-1} \in \bS_+^n \mmm |\bQ_{k + 1} - \bQ_k | > 0, ~\forall~ 0 \leq k \leq r - 1 \big\}.
\]
By compactness, $\sC_r^\epsilon$ attains its minimum at some $(\bQ_k)_{k = 1}^{r-1} \in \cQ_r$. Since $\sE_r(\vbQ) = \infty$ if the increments are not positive definite, $(\bQ_k)_{k = 1}^{r-1}$ must have positive definite increments,
\[
|\bQ_{k + 1} - \bQ_k | > 0 \quad \forall~ 0 \leq k \leq r - 1.
\]
This implies that symmetric matrices are admissible variations of $(\bQ_k)_{k = 1}^{r-1}$ [\ref{app:properties}, Proposition~\ref{prop:pertposdef}]. In particular, if $\bC$ is a symmetric matrix, then for all $t$ sufficiently small,
\[
(\bQ_1, \dots, \bQ_p + t \bC, \dots \bQ_{r - 1}) \in  \cQ_r \text{ for $1 \leq p \leq r- 1$}.
\]
If $(\bQ_k)_{k = 1}^{r-1} \in \cQ_r$ is a minimizer of $\sC_r^\epsilon$, then for all $1 \leq p \leq r- 1$, 
\[
\frac{d}{dt} \sC_r^\epsilon( \bL, \bQ_1, \dots, \bQ_p + t \bC, \dots \bQ_{r - 1} ) \Big|_{t = 0} = 0.
\]
We can compute the first variation of the functionals explicitly by computing the matrix derivatives of $\sC_r^\epsilon$ and derive some critical point conditions on the minimizers [Appendix~\ref{app:derivativesC}]:

	\noindent(a)~~~ For $2 \leq p \leq r - 1$, the directional derivatives of $\sC_r^\epsilon$ with respect to $\bQ_p$ in the symmetric direction $2\bC$ is
	\begin{align}
	\partial_{\bQ_p} \sC_r^\epsilon &= (x_{p - 1} - x_p) \langle \vh \vh^T , \bC  \rangle - (x_{p - 1} - x_p) \langle \bd_1^{-1} \bQ_1 \bd_1^{-1} , \bC \rangle \notag
	\\&\quad - (x_{p - 1} - x_p) \sum_{1 \leq k \leq p-1} \frac{1}{x_k} \langle \bd^{-1}_{k+1} - \bd^{-1}_{k }, \bC \rangle  + (x_{p - 1} - x_p) \langle \bxi'(\bQ_p) , \bC \rangle \notag
	\\&\quad + \epsilon \langle( \bQ_{p+ 1 } - \bQ_{p} )^{-1}, \bC \rangle - \epsilon \langle ( \bQ_{p} - \bQ_{p - 1} )^{-1}, \bC \rangle.\label{eq:derivCrisantiQ_p}
	\end{align}
	At the minimizer, we require
	\[
	\partial_{\bQ_p} \sC_r^\epsilon :=  \frac{d}{dt} \sC_r^\epsilon(\bQ_1, \dots, \bQ_p + 2t \bC, \dots \bQ_{r - 1} ) \Big|_{t = 0} = 0.
	\]
	This equality holds for all symmetric directions $\bC$, so the minimizer satisfies the critical point equation
	\begin{align}
	\bxi'(\bQ_p) &= -\vh \vh^T + \bd_1^{-1} \bQ_1 \bd_1^{-1} + \sum_{1 \leq k \leq p-1} \frac{1}{x_k} (\bd^{-1}_{k+1} - \bd^{-1}_{k }) + \epsilon \be_p, \label{eq:csqp}
	%\\&\quad + \frac{\epsilon}{x_p- x_{p-1} } ( \bQ_{p+ 1 } - \bQ_{p} )^{-1} - \frac{\epsilon}{x_p - x_{p-1}} ( \bQ_{p} - \bQ_{p - 1} )^{-1} .
	\end{align}
	where
	\begin{equation}\label{eq:upbdeterm}
	\be_p := \frac{1}{x_p- x_{p-1} } \Big( ( \bQ_{p+ 1 } - \bQ_{p} )^{-1} - ( \bQ_{p} - \bQ_{p - 1} )^{-1} \Big).
	\end{equation}
	$\be_p$ is well defined because we fixed a strictly increasing sequence $(x_p)_{p = 1}^r$ in \eqref{eq:xsequpbd}. 
	
	\noindent (b)~~~ For $p = 1$, the directional derivatives of $\sC_r^\epsilon$ with respect to $\bQ_1$ in the symmetric direction $2\bC$ is
	\begin{align}
	\partial_{\bQ_1}\sC_r^\epsilon &= - x_1 \langle \vh \vh^T, \bC \rangle + x_1 \langle \bd_1^{-1} \bQ_1 \bd_1^{-1}, \bC \rangle - x_1 \langle \bxi'(\bQ_1) , \bC \rangle \notag
	\\&\quad + \epsilon \langle ( \bQ_{2} - \bQ_{1} )^{-1} , \bC \rangle - \epsilon \langle \bQ_{1}^{-1} , \bC \rangle. 
	\end{align}
	At the minimizer, we require
	\[
	\partial_{\bQ_1} \sC_r^\epsilon :=  \frac{d}{dt} \sC_r^\epsilon( \bQ_1 + 2t\bC, \dots, \bQ_{r - 1} ) \Big|_{t = 0} = 0.
	\]
	This equality holds for all symmetric directions $\bC$, so the minimizer satisfies the critical point equation
	\begin{equation}\label{eq:csqp2}
	\bxi'(\bQ_{1} ) = -\vh \vh^T + \bd_1^{-1} \bQ_1 \bd_1^{-1} + \epsilon \be_1,
	\end{equation}
	where $\be_1$ is given by the formula in \eqref{eq:upbdeterm} with $p = 1$.

For $1 \leq p \leq r-1$, the critical point equations \eqref{eq:csqp} and \eqref{eq:csqp2} can be expressed as
\[
\bxi'(\bQ_{p} ) = -\vh \vh^T + \bd_1^{-1} \bQ_1 \bd_1^{-1} + \sum_{1 \leq k \leq p-1} \frac{1}{x_k} (\bd^{-1}_{k+1} - \bd^{-1}_{k}) + \epsilon \be_p
\]
where $\be_r := 0$ and $\be_p$ was defined in \eqref{eq:upbdeterm}.
By subtracting these equations, we can conclude for $1 \leq p \leq r - 2$,
\begin{equation}\label{eq:critptupbd}
x_p (\bxi'(\bQ_{p+1} ) - \bxi'(\bQ_{p} )) = \bd^{-1}_{p+1} - \bd^{-1}_{p} + \epsilon x_p ( \be_{p+1} - \be_{p} ).
\end{equation}
Consider $\bL$ given by $\bL := (\bQ - \bQ_{r - 1})^{-1} + \bxi'(\bQ) - \bxi'(\bQ_{r - 1}) - \epsilon (\be_r -  \be_{r - 1})$. For this choice of $\bL$, we have
\[
\bL_{r-1}(\epsilon) =  \bL - (\bxi'(\bQ) - \bxi'(\bQ_{r - 1})) + \epsilon (\be_r -  \be_{r - 1}) = \bd^{-1}_{r-1}.
\]
Subtracting \eqref{eq:critptupbd} from $\bL_{r-1}(\epsilon)$, we conclude that
\begin{equation}
\bL_p + \sum_{p \leq k \leq r - 1} x_k (\be_{k + 1} - \be_k) = \bd_p^{-1} \text{ for $1 \leq p \leq r-1$}.
\end{equation}
The critical point conditions implicitly implies that $\bL_1(\epsilon) > 0$ and $\bd_k^{-1}= \bL_{k}(\epsilon) < \bL_{k + 1}(\epsilon) = \bd_{k + 1}^{-1}$. We summarize the critical point condition in the following lemma.
\begin{lem}\label{lem:upbdcritpt} For fixed $r \geq 2$, if $\vbQ$ is a minimizer of $\sC_r^\epsilon$ and
	\begin{equation}\label{eq:critptpert1}
	\bL := (\bQ - \bQ_{r - 1})^{-1} + \bxi'(\bQ) - \bxi'(\bQ_{r - 1}) - \epsilon (\be_r -  \be_{r - 1})	
	\end{equation}
	then $\vbQ$ satisfies the following critical point equations
	\begin{equation}\label{eq:critptpert2}
	\bd_{p}^{-1} = \bL_p(\epsilon) \qquad \text{ for } 1 \leq p \leq r- 1
	\end{equation}
	where
	\begin{equation}\label{eq:upbderror}
	\bL_p(\epsilon) = \bL_p + \epsilon \bE_p \qquad \text{ and } \qquad \bE_p = \sum_{p \leq k \leq r-1} x_k (\be_{k + 1} - \be_k).
	\end{equation}
\end{lem}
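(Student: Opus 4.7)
The plan is to formalize the first-variation calculation whose ingredients are assembled in the paragraphs preceding the lemma. Since the barrier $\sE_r$ drives $\sC_r^\epsilon \to +\infty$ whenever any increment $|\bQ_{k+1}-\bQ_k| \to 0$, and $\sC_r^\epsilon$ is continuous on the compact set of admissible paths, the minimum $\vbQ$ lies in the interior of $\cQ_r$ with strictly positive definite increments. By Proposition~\ref{prop:pertposdef} every symmetric matrix $\bC$ is an admissible variation direction for each $\bQ_p$, so the directional derivatives $\partial_{\bQ_p}\sC_r^\epsilon$ must vanish for all such $\bC$.

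I would then compute these derivatives explicitly using the matrix calculus of Appendix~\ref{app:derivativesC}: the log-determinant and quadratic terms $\log|\bd_k|$ and $\langle \bd_1^{-1}, \bQ_1\rangle$ produce the $\bd_k^{-1}$ contributions (noting that $\bd_k$ depends on $\bQ_p$ only for $k \leq p$, which produces the truncated sums in the stationarity equations); the Hadamard-power sums contribute $(x_{p-1}-x_p)\bxi'(\bQ_p)$ via the identity $\nabla \Sum(\bxi(\bA)) = \bxi'(\bA)$; and the barrier yields $\epsilon\bigl((\bQ_{p+1}-\bQ_p)^{-1} - (\bQ_p - \bQ_{p-1})^{-1}\bigr)$. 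Testing against arbitrary symmetric $\bC$ via Proposition~\ref{prop:symmetrictest} yields the stationarity equations \eqref{eq:csqp} and \eqref{eq:csqp2}, and subtracting consecutive equations telescopes the inner sum to give the difference relation \eqref{eq:critptupbd}.

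It remains to identify $\bL_p(\epsilon)$ with $\bd_p^{-1}$. With $\bL$ defined by \eqref{eq:critptpert1}, a direct expansion using \eqref{def:L} and \eqref{eq:upbderror} gives $\bL_{r-1}(\epsilon) = \bd_{r-1}^{-1}$ as the base case. I would then proceed by downward induction on $p$: assuming $\bL_{p+1}(\epsilon) = \bd_{p+1}^{-1}$, combine the defining recursion $\bL_p = \bL_{p+1} - x_p\bigl(\bxi'(\bQ_{p+1}) - \bxi'(\bQ_p)\bigr)$ with the telescoping identity $\bE_p = \bE_{p+1} + x_p(\be_{p+1}-\be_p)$, and apply \eqref{eq:critptupbd} to obtain $\bL_p(\epsilon) = \bd_p^{-1}$. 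The implicit positivity $\bL_1(\epsilon) > 0$ and the monotonicity $\bL_k(\epsilon) < \bL_{k+1}(\epsilon)$ then follow immediately from this identification, since $\vbQ$ has positive definite increments and hence the $\bd_k$ form a decreasing sequence of positive definite matrices. The main obstacle, as often with these functionals, is the careful bookkeeping of how each $\bQ_p$ enters the various $\bd_k$-dependent and $\bxi(\bQ_k)$-dependent terms through overlapping summation indices; this is precisely what the appendix derivative calculation handles and is really the only non-algebraic step.
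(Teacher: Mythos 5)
Your proposal is correct and follows essentially the same route as the paper: barrier forces an interior minimizer with positive definite increments, the first variation in symmetric directions (computed as in Appendix~\ref{app:derivativesC} and tested via Proposition~\ref{prop:symmetrictest}) yields the stationarity equations \eqref{eq:csqp}--\eqref{eq:csqp2}, consecutive differences give \eqref{eq:critptupbd}, and with $\bL$ as in \eqref{eq:critptpert1} the base case $\bL_{r-1}(\epsilon)=\bd_{r-1}^{-1}$ plus the telescoping (your downward induction is the paper's summation of the difference relations) gives $\bL_p(\epsilon)=\bd_p^{-1}$ for all $1\leq p\leq r-1$. No gaps.
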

%These critical point conditions will allow us to reduce $\sC^\epsilon$ to a functional that resembles $\sP_r$ up to some error terms that depend on $\epsilon$. 

\subsection{Reduction to an approximate Parisi functional}

In this subsection, we will reduce $\sC_r^\epsilon(\vbQ)$ defined in \eqref{eq:pertcrisanti} to an approximate Parisi functional. If $\bL$ equals \eqref{eq:critptpert1} and $\vbQ$ satisfies the critical point conditions \eqref{eq:critptpert2}, then $\sC_r^\epsilon$ can be reduced to
\begin{align}
\sP_r^\epsilon(\bL,\vbQ) &= \frac{1}{2}\Big[ \langle \bL , \bQ \rangle  - n - \log |\bL| +  \sum_{1 \leq k \leq r-1} \frac{1}{x_k} \log \frac{| \bL_{k + 1} (\epsilon)|}{|\bL_k(\epsilon) |} \notag \\
&\quad + \langle\bL_1^{-1}(\epsilon), \vh\vh^\trans + \bxi'(\bQ_1) \rangle   - \sum_{1 \leq k \leq r-1} x_k \cdot \Sum \big( \btheta (\bQ_{k + 1}) - \btheta (\bQ_{k})\big) \notag\\
&\quad - \epsilon \sum_{1 \leq k \leq r-2} \langle \bE_{k+1} - \bE_k ,  \bQ_{k} \rangle  + \sum_{1 \leq k \leq r-2} \frac{\epsilon}{x_{k}} \langle  \bL_{k}^{-1}(\epsilon) ,\bE_{k} -  \bE_{k+1}  \rangle \notag\\
&\quad +\epsilon \langle \bL_{r - 1}^{-1}(\epsilon) , \bE_{r-1} \rangle + \epsilon \langle \bQ_{r - 1} , \bE_{r-1} \rangle - \epsilon \sum_{0 \leq k \leq r - 1}  \log | \bQ_{k + 1} - \bQ_k | \Big] \label{eq:approxparisi}.
\end{align}
Notice that $\sP_r^\epsilon(\bL,\vbQ)$ is of the same form as $\sP_r(\bL,\vex,\vbQ)$, but with $\bL_k$ replaced by $\bL_k(\epsilon)$ and some additional error terms. If we set $\epsilon = 0$, then the error terms in the second line all vanish and we are left with the usual $\sP_r(\bL,\vex,\vbQ)$ functional. In the next subsection, we will show that we can bound the minimum of $\sP_r^\epsilon$ with $\sP_r$ evaluated at a different parameter to remove the error terms. 

\begin{lem}\label{lem:upbdreduction}
	For fixed $r$, if $\bL$ and $ (\bQ_k)_{k = 1}^r$ satisfy the critical point conditions \eqref{eq:critptpert1} and \eqref{eq:critptpert2}, then
	\[
	\sC_r^\epsilon(\vbQ) = \sP_r^\epsilon(\bL,\vbQ).
	\]
\end{lem}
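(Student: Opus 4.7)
The plan is to verify directly that $2(\sP_r^\epsilon(\bL,\vbQ) - \sC_r^\epsilon(\vbQ)) = 0$ when the critical point conditions \eqref{eq:critptpert1}--\eqref{eq:critptpert2} hold, following the same bookkeeping as in Lemma~\ref{lem:lwbdreduction} but run in reverse. The three identities driving the computation are
\begin{align*}
\bL_{k+1} - \bL_k &= x_k(\bxi'(\bQ_{k+1}) - \bxi'(\bQ_k)), \qquad 1 \leq k \leq r-1, \\
\bd_k - \bd_{k+1} &= x_k(\bQ_{k+1} - \bQ_k), \qquad 1 \leq k \leq r-1, \\
\bL_p(\epsilon) &= \bd_p^{-1}, \qquad 1 \leq p \leq r-1,
\end{align*}
with the conventions $\bL_r := \bL$, $\bd_r := \bm 0$, and $\bE_r = 0$ (so that $\bL_r(\epsilon) = \bL$).

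The easy cancellations come first. The external field terms match immediately since $\langle \vh \vh^\trans, \bd_1 \rangle = \langle \bL_1^{-1}(\epsilon), \vh \vh^\trans \rangle$ by the third identity, the barrier $-\epsilon \sum_k \log|\bQ_{k+1} - \bQ_k|$ appears identically in both functionals, and using $x_{r-1} = 1$ together with $\bL_p(\epsilon) = \bd_p^{-1}$ the log-determinant block telescopes to
\[
-\log|\bL| + \sum_{1 \leq k \leq r-1} \frac{1}{x_k} \log \frac{|\bL_{k+1}(\epsilon)|}{|\bL_k(\epsilon)|} = \log|\bQ - \bQ_{r-1}| - \sum_{1 \leq k \leq r-2} \frac{1}{x_k} \log \frac{|\bd_{k+1}|}{|\bd_k|},
\]
which is exactly the block appearing in $\sC_r^\epsilon$.

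The substantive work is then to fold the $\btheta$ and $\bxi$ double sums together with the $\epsilon$-error terms into the Lagrange contribution $\langle \bL, \bQ \rangle - n$. Using the pointwise identity $\bxi + \btheta = \bA \odot \bxi'(\bA)$, the two double sums combine into $\sum_{k=1}^{r-1} x_k(\langle \bQ_{k+1}, \bxi'(\bQ_{k+1}) \rangle - \langle \bQ_k, \bxi'(\bQ_k) \rangle)$, which is exactly the quantity \eqref{eq:sumfirstr1} simplified in the proof of Lemma~\ref{lem:lwbdreduction}. Applying summation by parts together with the consequence
\[
\bxi'(\bQ_{k+1}) - \bxi'(\bQ_k) = \bd_k^{-1}(\bQ_{k+1} - \bQ_k) \bd_{k+1}^{-1} + \frac{\epsilon}{x_k} \bd_k^{-1}(\bE_k - \bE_{k+1}) \bd_{k+1}^{-1}
\]
of the three driving identities, the main pieces assemble into the boundary contributions $\langle \bd_{r-1}^{-1}, \bQ_{r-1} \rangle - \langle \bd_1^{-1}, \bQ_1 \rangle + \langle \bd_{r-1}, \bxi'(\bQ_{r-1}) \rangle - \langle \bd_1, \bxi'(\bQ_1) \rangle$, while the $\epsilon$-corrections reproduce line by line the explicit error terms appearing in \eqref{eq:approxparisi}.

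The argument closes as in \eqref{eq:tracep}: from the definition \eqref{eq:critptpert1} of $\bL$ one computes
\[
\langle \bd_{r-1}^{-1}, \bQ_{r-1} \rangle = -\tr(\bI) + \langle \bL, \bQ \rangle - \langle \bQ, \bxi'(\bQ) - \bxi'(\bQ_{r-1}) \rangle + \epsilon \langle \bd_{r-1}^{-1}, \bE_{r-1} \rangle,
\]
and substituting this cancels $\langle \bL, \bQ \rangle - n$ together with the remaining boundary terms. The main obstacle is purely combinatorial: one must propagate the $\epsilon \be_p$ corrections through every summation-by-parts step and verify that they assemble into exactly the error terms $-\epsilon \sum_k \langle \bE_{k+1} - \bE_k, \bQ_k \rangle$, $\epsilon \sum_k \frac{1}{x_k} \langle \bL_k^{-1}(\epsilon), \bE_k - \bE_{k+1} \rangle$, $\epsilon \langle \bL_{r-1}^{-1}(\epsilon), \bE_{r-1} \rangle$, and $\epsilon \langle \bQ_{r-1}, \bE_{r-1} \rangle$ prescribed in \eqref{eq:approxparisi}.
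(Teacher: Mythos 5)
Your overall strategy is the same as the paper's: verify $2(\sP_r^\epsilon-\sC_r^\epsilon)=0$ directly from the three driving identities, with the field, barrier and log-determinant blocks cancelling at once and the combined $\btheta$/$\bxi$ sums handled by summation by parts plus one final trace identity. The problem is that the two displayed formulas in which you actually track the $\epsilon$-corrections are both false, and they are false in exactly the way that matters for this lemma: you have imported the lower-bound bookkeeping (where the perturbation sits inside $\bd_k(\epsilon)$) into the upper-bound setting, where it sits inside $\bL_k(\epsilon)$. From $\bd_k^{-1}=\bL_k(\epsilon)=\bL_k+\epsilon\bE_k$ one gets
\[
\bxi'(\bQ_{k+1})-\bxi'(\bQ_k)=\tfrac{1}{x_k}(\bL_{k+1}-\bL_k)=\bd_k^{-1}(\bQ_{k+1}-\bQ_k)\bd_{k+1}^{-1}+\tfrac{\epsilon}{x_k}(\bE_k-\bE_{k+1}),
\]
so the $\epsilon$-term is \emph{not} sandwiched between $\bd_k^{-1}$ and $\bd_{k+1}^{-1}$ as you wrote; the sandwiched structure belongs instead to the relation for $\bQ_k-\bQ_{k+1}$ in terms of $\bL_k^{-1}(\epsilon)(\cdot)\bL_{k+1}^{-1}(\epsilon)$, which is what the paper uses after \eqref{eq:summation2}. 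With your version, the summation-by-parts step produces error terms of the form $\frac{\epsilon}{x_k}\langle \bL_{k+1}(\epsilon),\bE_k-\bE_{k+1}\rangle$ rather than $\frac{\epsilon}{x_k}\langle \bL_k^{-1}(\epsilon),\bE_k-\bE_{k+1}\rangle$, so the claimed line-by-line match with the error terms of \eqref{eq:approxparisi} fails.

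The same transplant occurs in your closing step, which copies \eqref{eq:tracep} from the lower bound. Here $\bd_{r-1}=\bQ-\bQ_{r-1}$ carries no $\epsilon$-correction while $\bd_{r-1}^{-1}=\bL-\bxi'(\bQ)+\bxi'(\bQ_{r-1})+\epsilon\bE_{r-1}$, so the correct expansion is
\[
\langle\bd_{r-1}^{-1},\bQ_{r-1}\rangle=-\tr(\bI)+\langle\bL,\bQ\rangle-\langle\bQ,\bxi'(\bQ)-\bxi'(\bQ_{r-1})\rangle+\epsilon\langle\bQ,\bE_{r-1}\rangle,
\]
with $\epsilon\langle\bQ,\bE_{r-1}\rangle$ in place of your $\epsilon\langle\bd_{r-1}^{-1},\bE_{r-1}\rangle$; these differ, and with your version a leftover $\epsilon\langle\bQ-\bd_{r-1}^{-1},\bE_{r-1}\rangle$ survives and the computation does not close. (There is also a sign inconsistency in your list of boundary contributions, but that is minor.) Since the entire content of the lemma is precisely this $\epsilon$-bookkeeping, as you yourself note, these are genuine defects rather than typos. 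Once the two identities are replaced by their correct upper-bound versions, your outline does close: the corrected first identity yields exactly the $\frac{\epsilon}{x_k}\langle\bL_k^{-1}(\epsilon),\bE_k-\bE_{k+1}\rangle$ terms of \eqref{eq:approxparisi}, and the corrected final expansion of $\langle\bd_{r-1}^{-1},\bQ_{r-1}\rangle$ (a legitimate mild variant of the paper's route through $\langle\bL_{r-1}^{-1}(\epsilon),\bxi'(\bQ_{r-1})\rangle$ in \eqref{eq:tracecs}) cancels the remaining terms because $\bL_{r-1}^{-1}(\epsilon)+\bQ_{r-1}=\bQ$.
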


\begin{proof} The proof is a straightforward but tedious computation. The computation is almost identical to the proof of Lemma~\ref{lem:lwbdreduction}. Assuming that \eqref{eq:critptpert1} and \eqref{eq:critptpert2} hold, we will show that 
	\[
	2 (\sP^\epsilon_r(\bL,\vbQ) - \sC_r^\epsilon( \vbQ ) ) = 0.
	\]
	We will use the following identities multiple times throughout the proof,
	\begin{alignat}{2}
	\bL_{k + 1} - \bL_{k}  &= x_k (\bxi'(\bQ_{k + 1}) - \bxi'(\bQ_{k})) &&\qquad 1 \leq k \leq r - 1 \label{eq:upbd1}\\
	\bd_{k} - \bd_{k + 1}  &= x_k (\bQ_{k + 1} - \bQ_{k}) &&\qquad 1 \leq k \leq r - 1\label{eq:upbd2}\\
	\bd_{k}^{-1} &= \bL_k(\epsilon)  &&\qquad 1 \leq k \leq r - 1\label{eq:upbd3}
	\end{alignat}
	where $\bd_{r} := \bm 0$. These identities will allow us to simplify $\sC_r^\epsilon$ into $\sP_r^\epsilon$. 
	
	We start by observing that the external fields cancel if \eqref{eq:critptpert2} holds,
	\begin{equation}\label{eq:upbdexternal}
	\langle\vh\vh^\trans , \bL_1^{-1}(\epsilon) \rangle \stackrel{\mathclap{\eqref{eq:upbd3}}}{=} \langle \vh\vh^\trans , \bd_1(\epsilon)  \rangle .
	\end{equation}
	Next, we simplify the summation fo the logarithm terms in $\sP^\epsilon_r$. Equation \eqref{eq:critptpert2} applied the $r - 1$ term implies that the boundary term in the first summation of \eqref{eq:approxparisi} simplifies to
	\[
	\frac{1}{x_{r - 1}} (\log| \bL_r (\epsilon)| - \log| \bL_{r - 1} (\epsilon)|) \stackrel{\mathclap{\eqref{eq:upbd3}}}{=} \log| \bL| - \log|\bd^{-1}_{r - 1}| = \log | \bL| + \frac{1}{x_{r - 1}}\log| \bQ - \bQ_{r-1}|,
	\]
	since $\bd_{r - 1} = \bQ - \bQ_{r - 1}$ and $x_{r - 1} = 1$. Applying \eqref{eq:critptpert2} again to $\bL_{k + 1} (\epsilon)$ for $1 \leq k \leq r-1$ implies
	\[
	- \log|\bL| + \sum_{1 \leq k \leq r-1} \frac{1}{x_k} \log \frac{|\bL_{k + 1} (\epsilon)|}{|\bL_k(\epsilon)|} \stackrel{\mathclap{\eqref{eq:upbd3}}}{=} \frac{1}{x_{r - 1}} \log| \bQ - \bQ_{r-1}| - \sum_{1 \leq k \leq r-2} \frac{1}{x_k} \log \frac{|\bd_{k + 1}|}{|\bd_k|},
	\]
	Therefore, the log determinant terms in $\sP_r^\epsilon$ and $\sC_r^\epsilon$ also cancel.
	
	Since $\btheta(\bA) = \bA \odot \bxi'(\bA) - \bxi(\bA)$ and $\Sum(\bA \odot \bB) =  \langle\bA , \bB \rangle$, the remaining terms in $2 (\sP^\epsilon_r - \sC_r^\epsilon )$ are
	\begin{align}
	&- \sum_{1 \leq k \leq r-2} x_k \Big( \langle \bQ_{k + 1} , \bxi' (\bQ_{k + 1}) \rangle -\langle \bQ_{k} , \bxi' (\bQ_{k}) \rangle  \Big) - \Big( \langle \bQ , \bxi'(\bQ)  \rangle - \langle \bQ_{r-1} , \bxi'(\bQ_{r-1}) \rangle \Big) \label{eq:reducCtoP1}
	\\&\quad - \epsilon \sum_{1 \leq k \leq r-2} \langle \bE_{k+1} - \bE_k  , \bQ_{k} \rangle   + \sum_{1 \leq k \leq r-2} \frac{\epsilon}{x_{k}} \tr \langle \bL_{k}^{-1}(\epsilon) , \bE_{k} -  \bE_{k+1} \rangle \label{eq:reducCtoP2}
	\\&\quad + \langle \bL , \bQ \rangle - n  + \langle \bxi'(\bQ_1) , \bL_1^{-1}(\epsilon) \rangle + \epsilon \langle \bL_{r - 1}^{-1}(\epsilon) , \bE_{r-1} \rangle + \epsilon \langle \bQ_{r - 1}, \bE_{r-1} \rangle - \langle \bQ_1 , \bd_1^{-1}\rangle  \label{eq:reducCtoP3}.
	%\\&= \tr(\bL \bQ) - n  - \sum_{0 \leq k \leq r-2} x_k \cdot \Big( \tr\big( \bQ_{k + 1} \bxi' (\bQ_{k + 1}) \big) - \tr\big(\bQ_{k} \bxi' (\bQ_{k}) \big) \Big) + \tr( \bQ \bxi'(\bQ) ) - \tr( \bQ_{r - 1} \bxi'(\bQ_{r - 1}) ) .
	\end{align}
	We will show that \eqref{eq:reducCtoP1} will cancel \eqref{eq:reducCtoP2} and \eqref{eq:reducCtoP3} at the critical point. 
	
	We start by simplifying the first summation term in \eqref{eq:reducCtoP1} using \eqref{eq:critptpert2},
	\begin{align}
	&\quad -\sum_{1 \leq k \leq r-2} x_k \Big( \langle \bQ_{k + 1} , \bxi' (\bQ_{k + 1}) \rangle - \langle \bQ_{k} , \bxi' (\bQ_{k}) \rangle  \Big) \label{eq:sumfirstr}
	\\&= -\sum_{1 \leq k \leq r-2} \Big( \langle x_k(\bQ_{k + 1} - \bQ_{k}) ,  \bxi' (\bQ_{k + 1}) \rangle + \langle \bQ_{k} , x_k (\bxi' (\bQ_{k + 1}) - \bxi' (\bQ_{k}) \rangle \Big) \notag
	\\&\stackrel{\mathclap{\eqref{eq:upbd1}\eqref{eq:upbd2}}}{=} -\sum_{1 \leq k \leq r-2} \Big( \langle \bd_{k} - \bd_{k + 1}  ,  \bxi' (\bQ_{k + 1}) \rangle + \langle \bQ_{k} , \bL_{k + 1}(\epsilon) - \bL_{k}(\epsilon) \rangle \Big) + \epsilon \sum_{1 \leq k \leq r-2} \langle \bE_{k+1} - \bE_k ,  \bQ_{k} \rangle .\label{eq:upbdsum}
	\end{align}
	Using summation by parts and \eqref{eq:critptpert2}, the first summation \eqref{eq:upbdsum} is equal to
	\begin{align}
	&-\sum_{1 \leq k \leq r-2} \Big( \langle \bL_{k}^{-1}(\epsilon) , \bxi' (\bQ_{k + 1}) - \bxi' (\bQ_{k}) \rangle + \langle \bL_{k + 1}(\epsilon) , \bQ_{k} - \bQ_{k + 1} \rangle \Big) \label{eq:summation2}
	\\&- \langle \bL^{-1}_1(\epsilon), \bxi'(\bQ_1) \rangle + \langle \bL^{-1}_{r - 1}(\epsilon), \bxi'(\bQ_{r - 1}) \rangle - \langle \bL_{r - 1}(\epsilon) , \bQ_{r- 1} \rangle + \langle \bL_{1}(\epsilon) , \bQ_1\rangle \notag.
	\end{align}
	From the critical point condition \eqref{eq:critptpert2}, we have
	\begin{align*}
	\bQ_{k} - \bQ_{k+1} &\stackrel{\mathclap{\eqref{eq:upbd2}}}{=} \frac{1}{x_{k}}( \bd_{k + 1} -  \bd_{k}) 
	%\\&\color{gray} \stackrel{\mathclap{\eqref{eq:upbd3}}}{=} \frac{1}{x_{k}}( \bL^{-1}_{k + 1}(\epsilon) -  \bL^{-1}_{k}(\epsilon)) 
	\\&\stackrel{\mathclap{\eqref{eq:upbd3}}}{=} \frac{1}{x_{k}} \bL_{k}^{-1}(\epsilon)( \bL_{k}(\epsilon) -  \bL_{k+1}(\epsilon))\bL_{k+1}^{-1}(\epsilon) 
	\\&\stackrel{\mathclap{\eqref{eq:upbd1}}}{=} \bL_{k}^{-1}(\epsilon)(\bxi'(\bQ_{k})  - \bxi'(\bQ_{k+1}))\bL_{k+1}^{-1}(\epsilon) + \frac{\epsilon}{x_{k}} \bL_{k}^{-1}(\epsilon)( \bE_{k} -  \bE_{k+1})\bL_{k+1}^{-1} (\epsilon),
	\end{align*}
	which combined with the fact $\tr(\bA \bB \bC) = \tr(\bC \bA \bB)$ and equation \eqref{eq:summation2} implies that
	\begin{align}
	\eqref{eq:sumfirstr} =& \epsilon \sum_{1 \leq k \leq r-2} \langle  \bE_{k+1} - \bE_k , \bQ_{k} \rangle - \sum_{1 \leq k \leq r-2} \frac{\epsilon}{x_{k}} \langle \bL_{k}^{-1}(\epsilon) , \bE_{k} -  \bE_{k+1} \rangle \notag
	\\&- \langle\bL^{-1}_1(\epsilon) ,\bxi'(\bQ_1)  \rangle  + \langle \bL^{-1}_{r - 1}(\epsilon), \bxi'(\bQ_{r - 1}) \rangle - \langle \bL_{r - 1}(\epsilon), \bQ_{r- 1} \rangle + \langle \bL_{1}(\epsilon) , \bQ_1 \rangle \label{eq:upbdintermediate}.
	\end{align}
	Substituting \eqref{eq:upbdintermediate} into \eqref{eq:reducCtoP1} implies that
	\begin{align}
	2 (\sP^\epsilon_r - \sC_r^\epsilon ) &=  \langle \bL ,\bQ\rangle - n    + \langle \bxi'(\bQ_1) , \bL_1^{-1}(\epsilon) \rangle + \epsilon \langle \bL_{r - 1}^{-1}(\epsilon) , \bE_{r-1} \rangle + \epsilon \langle \bQ_{r - 1} , \bE_{r-1} \rangle - \langle \bQ_1 , \bd_1^{-1} \rangle \notag
	\\&\quad   - \langle \bL^{-1}_1(\epsilon) , \bxi'(\bQ_1) \rangle + \langle \bL^{-1}_{r - 1}(\epsilon), \bxi'(\bQ_{r - 1}) \rangle - \langle \bL_{r - 1}(\epsilon) , \bQ_{r- 1} \rangle + \langle \bL_{1}(\epsilon) , \bQ_1 \rangle \notag
	\\&\quad -  \langle \bQ, \bxi' (\bQ) \rangle + \langle \bQ_{r-1} , \bxi' (\bQ_{r-1}) \rangle  \notag\\
	&\stackrel{\mathclap{\eqref{eq:upbd3}}}{=}   \langle \bL , \bQ\rangle - n  +\epsilon \langle \bL_{r - 1}^{-1}(\epsilon) , \bE_{r-1} \rangle + \epsilon \langle \bQ_{r - 1} , \bE_{r-1} \rangle \notag
	\\&\quad + \langle \bL^{-1}_{r - 1}(\epsilon), \bxi'(\bQ_{r - 1}) \rangle - \langle \bL_{r - 1}(\epsilon) , \bQ_{r- 1} \rangle -  \langle \bQ , \bxi' (\bQ) \rangle + \langle \bQ_{r-1} , \bxi' (\bQ_{r-1}) \rangle \notag
	\\&= \langle \bL, \bQ\rangle - \tr (\bI)   +\epsilon \langle \bL_{r - 1}^{-1}(\epsilon), \bE_{r-1} \rangle + \langle \bL^{-1}_{r - 1}(\epsilon), \bxi'(\bQ_{r - 1}) \rangle \notag
	\\&\quad- \langle \bL, \bQ_{r-1} \rangle + \langle \bxi'(\bQ), \bQ_{r-1}\rangle -  \langle \bQ , \bxi' (\bQ) \rangle   \label{eq:upbdremaining}.
	\end{align}
	since $\bL_{r - 1}(\epsilon) = \bL - (\bxi'(\bQ) - \bxi'(\bQ_{r - 1})) + \epsilon \bE_{r-1}$. We will show that the $\langle \bL^{-1}_{r - 1}(\epsilon), \bxi'(\bQ_{r- 1})\rangle$ term will cancel all remaining terms. Using the definition of $\bL_{r - 1}(\epsilon)$ defined in \eqref{def:L} and \eqref{eq:upbderror},
	\begin{align*}
	\bL^{-1}_{r - 1}(\epsilon) \bxi'(\bQ_{r- 1}) &= \bL_{r - 1}^{-1}(\epsilon) ( \bL_{r - 1}(\epsilon)- \bL + \bxi'(\bQ) - \epsilon \bE_{r - 1} )
	\\&= \bI - \bL^{-1}_{r - 1}(\epsilon) \bL + \bL^{-1}_{r - 1}(\epsilon) \bxi'(\bQ) - \epsilon \bL^{-1}_{r - 1}(\epsilon)  \bE_{r-1}.
	%\bL^{-1}_{r - 1}(\epsilon) \bxi'(\bQ_{r- 1}) &= (\bL - \bxi'(\bQ) + \bxi'(\bQ_{r - 1}) + \epsilon \bE_{r-1})^{-1} \bxi'(\bQ_{r - 1}) 
	%\\&= (\bL - \bxi'(\bQ) + \bxi'(\bQ_{r - 1}) + \epsilon \bE_{r-1})^{-1} (\bL - \bxi'(\bQ) + \bxi'(\bQ_{r - 1})  + \epsilon \bE_{r-1} - \bL + \bxi'(\bQ)  - \epsilon \bE_{r-1} ) 
	%\\&= \bI - \bL^{-1}_{r - 1}(\epsilon) \bL + \bL^{-1}_{r - 1}(\epsilon) \bxi'(\bQ) - \epsilon \bL^{-1}_{r - 1}(\epsilon)  \bE_{r-1}.
	\end{align*}
	From \eqref{eq:critptpert2} and the fact $\bL_{r-1}^{-1}(\epsilon) = \bd_{r - 1} = \bQ - \bQ_{r - 1}$, we have
	\[
	- \bL^{-1}_{r - 1}(\epsilon) \bL + \bL^{-1}_{r - 1}(\epsilon) \bxi'(\bQ)  = -(\bQ - \bQ_{r-1}) \bL + (\bQ - \bQ_{r-1}) \bxi'(\bQ).
	\]
	Since $\tr(\bA \bB) = \tr(\bB \bA)$, taking the trace implies
	\begin{equation}\label{eq:tracecs}
	\langle \bL^{-1}_{r - 1}(\epsilon) , \bxi'(\bQ_{r- 1}) \rangle = \tr(\bI) - \langle \bL, \bQ\rangle+ \langle \bL, \bQ_{r-1}\rangle + \langle \bQ, \bxi'(\bQ) \rangle - \langle \bxi'(\bQ), \bQ_{r-1} \rangle - \epsilon \langle \bL^{-1}_{r - 1}(\epsilon), \bE_{r-1} \rangle.
	\end{equation}
	Substituting \eqref{eq:tracecs} into \eqref{eq:upbdremaining} cancels out all remaining terms, so
	\[
	2 \big( \sC_r^\epsilon(\vbQ) - \sP_r^\epsilon(\bL,\vbQ) \big) = 0.
	\]
\end{proof}

\subsection{Removing the Error Terms} Like the case of the upper bound, we will use concavity of the terms of $\sP^\epsilon_r(\bL,\vbQ)$ defined in \eqref{eq:approxparisi} to bound the minimizer with $\sP_r$ evaluated at a different Lagrange multiplier parameter. To this end, we define
\[
\tilde\bL = \bL + \epsilon \bE_1
\]
and for $1 \leq p \leq r-1$,
\[
\tilde \bL_p = \tilde \bL - \sum_{p \leq k \leq r-1} x_k ( \bxi'(\bQ_{k + 1}) - \bxi'(\bQ_k) ).
\]
We first note that $\tilde \bL \in \sL$. By definition, we have
\begin{equation}\label{eq:newlag}
\tilde \bL_1 := \tilde \bL - \sum_{1 \leq k \leq r- 1} x_k (\bxi'(\bQ_{k + 1}) - \bxi'(\bQ_k) ) = \bL - \sum_{1 \leq k \leq r- 1} x_k (\bxi'(\bQ_{k + 1}) - \bxi'(\bQ_k) ) + \epsilon \bE_1  =  \bL_1(\epsilon),
\end{equation}
is positive definite at the critical point because \eqref{eq:critptpert1} implies $\bL_1(\epsilon) = \bd_1^{-1} > 0$. By monotonicity, this implies that $\tilde \bL_p > 0$ for all $1 \leq p \leq r - 1$.  

\iffalse %true but uncessary observation
\color{gray} Since 
\[
\tilde \bL_{k + 1} = \tilde \bL_{k} + x_k( \bxi'(\bQ_{k + 1}) - \bxi'(\bQ_k)  ),
\]
this also implies that $\tilde \bL_{k} > 0$ for all $1 \leq k \leq r-1$ since the sum of positive definite matrices are positive definite. Combined with the fact $\bL_{k}(0) + \epsilon \bE_1 = \tilde \bL_k$ for $2 \leq k \leq r$, we have that $\tilde \bL$ is in the set of Lagrange multipliers we minimize over in the original Parisi functional $\sP_r^\epsilon(\bL, \vbQ)$. 
\color{black}
\fi

We will use concavity of the log determinants to prove that the original Parisi functional evaluated at $\tilde \bL$ is a lower bound of $\sP_r^\epsilon(\bL,\vbQ)$,
\begin{equation}\label{eq:upbdlowbd}
\sP_r^\epsilon(\bL,\vbQ) \geq \sP_r(\tilde \bL,\vbQ)
\end{equation}
provided that $ (\bQ_k)_{k = 1}^r$ satisfies the critical point conditions \eqref{eq:critptpert2}. Since both $\tilde \bL$ and the path $\vbQ$ are elements in the sets we minimize over, we get the obvious lower bound,
\[
\sC_r^\epsilon(\vbQ) = \sP_r^\epsilon(\bL,\vbQ) \geq \sP_r(\tilde \bL, \vbQ) \geq  \inf_{r,\Lambda,x,Q} \sP_r(\bL,\vex,\vbQ).
\]
The lower bound does not depend on the discretization $r$, $\epsilon$, nor the fixed sequence \eqref{eq:xsequpbd}. In particular, we can minimize $\sC_r^\epsilon$ over sequences \eqref{eq:xsequpbd}, $r$ and $\epsilon$ to prove the required upper bound,
\[
\inf_{r, x,Q} \sC_r(\vex,\vbQ) \geq \inf_{r, \Lambda,x,Q} \sP_r(\bL, \vex, \vbQ) .
\]

We now prove the lower bound \eqref{eq:upbdlowbd}. 

\begin{lem}\label{lem:upbdconcave}
	If $\vex$ is equal to \eqref{eq:xsequpbd}, $\vbQ$ satisfies the critical point conditions \eqref{eq:critptpert2} and $\bL$ equals \eqref{eq:critptpert1}, then
	\[
	\sP_r^\epsilon(\bL,\vbQ) \geq \sP_r(\tilde \bL, \vex, \vbQ). 
	\]
\end{lem}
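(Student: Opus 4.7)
The plan is to mirror the proof of Lemma~\ref{lem:lwbdconcave}, using concavity of the log-determinant in place of convexity of $\bxi$. Set $\Delta := 2\bigl(\sP_r^\epsilon(\bL,\vbQ) - \sP_r(\tilde{\bL},\vex,\vbQ)\bigr)$; the goal is $\Delta \geq 0$. The critical point identity $\tilde{\bL}_1 = \bL_1(\epsilon)$ from \eqref{eq:newlag} causes the $\vh\vh^\trans$ and $\bxi'(\bQ_1)$ contributions to coincide in the two functionals, and the $\btheta$ summation is common to both. What remains in $\Delta$ is (i) the linear piece $\langle\bL - \tilde{\bL}, \bQ\rangle = -\epsilon\langle\bE_1,\bQ\rangle$, (ii) the log-determinant difference, (iii) the four explicit $\epsilon$-error terms in the last two lines of \eqref{eq:approxparisi}, and (iv) the barrier $-\epsilon\sum_{k=0}^{r-1}\log|\bQ_{k+1}-\bQ_k|$, which is nonnegative by Proposition~\ref{prop:AMGM}.

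Next I would collapse the log-determinant difference by Abel summation. Writing $c_k := \log|\bL_k(\epsilon)| - \log|\tilde{\bL}_k|$, the boundary data $c_1 = 0$, $x_{r-1} = 1$, and $\bL_r(\epsilon) = \bL$ yield
\[
(\log|\tilde{\bL}| - \log|\bL|) + \sum_{k=1}^{r-1}\tfrac{1}{x_k}\log\tfrac{|\bL_{k+1}(\epsilon)|\,|\tilde{\bL}_k|}{|\bL_k(\epsilon)|\,|\tilde{\bL}_{k+1}|} \;=\; \sum_{k=2}^{r-1}\bigl(\tfrac{1}{x_{k-1}}-\tfrac{1}{x_k}\bigr)c_k,
\]
where the coefficients are positive by \eqref{eq:xsequpbd}. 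Then the concavity of $\log|\cdot|$ on $\bS_+^n$ applied to $\tilde{\bL}_k = \bL_k(\epsilon) + \epsilon(\bE_1 - \bE_k)$ gives, using the critical point identity $\bL_k(\epsilon)^{-1} = \bd_k$ from \eqref{eq:critptpert2}, the tangent-plane bound $\log|\tilde{\bL}_k| \leq \log|\bL_k(\epsilon)| + \epsilon\langle\bd_k, \bE_1 - \bE_k\rangle$, so $c_k \geq \epsilon\langle\bd_k, \bE_k - \bE_1\rangle$. Multiplying by the positive coefficients and summing produces the lower bound
\[
\sum_{k=2}^{r-1}\bigl(\tfrac{1}{x_{k-1}}-\tfrac{1}{x_k}\bigr)c_k \;\geq\; \epsilon\sum_{k=2}^{r-1}\bigl(\tfrac{1}{x_{k-1}}-\tfrac{1}{x_k}\bigr)\langle\bd_k, \bE_k-\bE_1\rangle.
\]

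The last step is to verify that this concavity slack, together with $-\epsilon\langle\bE_1,\bQ\rangle$ and the four explicit error terms of \eqref{eq:approxparisi}, sums to exactly zero. Using \eqref{eq:upbd2} ($\bd_k - \bd_{k+1} = x_k(\bQ_{k+1}-\bQ_k)$) and $\bE_k - \bE_{k+1} = x_k(\be_{k+1}-\be_k)$ together with Abel summation, the $\bd_k$-weighted terms collapse to $-\epsilon\sum_{k=1}^{r-2}\langle\bQ_{k+1}-\bQ_k, \bE_{k+1}-\bE_1\rangle + \epsilon\langle\bd_{r-1}, \bE_1\rangle$. Combining with the $\bQ_k$-weighted error terms via a second Abel rearrangement and the telescoping identity $\sum_{k=1}^{r-2}(\bQ_{k+1}-\bQ_k) = \bQ_{r-1}-\bQ_1$, the whole sum reduces to $\epsilon\langle\bE_1, \bQ_{r-1}+\bd_{r-1}-\bQ\rangle$, and the boundary identity $\bd_{r-1} = \bQ - \bQ_{r-1}$ kills this exactly. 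Hence $\Delta$ is bounded below by the nonnegative barrier, proving the lemma.

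The main obstacle is this final bookkeeping step: the tangent-plane slack from concavity must combine with the $\epsilon$-terms of $\sP_r^\epsilon$ (which were engineered in Lemma~\ref{lem:upbdreduction} precisely so that this reduction is possible) to cancel exactly against the linear piece $-\epsilon\langle\bE_1,\bQ\rangle$. Once the two Abel rearrangements are carried out and the boundary identity $\bd_{r-1}=\bQ-\bQ_{r-1}$ is invoked, the cancellation is automatic and the proof concludes via the nonnegativity of the barrier. This mirrors the structure of Lemma~\ref{lem:lwbdconcave}, where an analogous convexity tangent-plane bound for $\bxi$ absorbed the dual error terms.
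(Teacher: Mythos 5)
Your proposal is correct and follows essentially the same route as the paper: after cancelling the common $\btheta$ and $\bL_1^{-1}(\epsilon)=\tilde\bL_1^{-1}$ terms, you Abel-sum the log determinants, apply the tangent-plane concavity bound of Proposition~\ref{prop:logconcave} to $\tilde\bL_k=\bL_k(\epsilon)+\epsilon(\bE_1-\bE_k)$ with $\bL_k(\epsilon)^{-1}=\bd_k$, and verify via the critical point identities and $\bd_{r-1}=\bQ-\bQ_{r-1}$ that the remaining $\epsilon$-terms cancel exactly, leaving only the nonnegative barrier. The paper performs the same steps in a slightly different order (rewriting the error terms as $\epsilon\langle\bQ,\bE_1\rangle$ plus coefficients matching the log terms before invoking concavity), but the argument is the same.
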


\begin{proof}
	Since $(\bQ_k)_{k = 1}^r$ is unchanged and $\sE_r \geq 0$, it remains to show that
	\begin{align}
	&\quad \langle \bL, \bQ\rangle - n - \log |\bL| +  \sum_{1 \leq k \leq r-1} \frac{1}{x_k} \log \frac{| \bL_{k + 1} (\epsilon)|}{| \bL_k (\epsilon)|}  + \langle \vh \vh^\trans + \bxi'(\bQ_1),  \bL_1^{-1} (\epsilon) \rangle \notag
	\\&\quad - \epsilon \sum_{1 \leq k \leq r-2} \langle \bE_{k+1} - \bE_k ,  \bQ_{k} \rangle + \sum_{1 \leq k \leq r-2} \frac{\epsilon}{x_{k}} \langle \bL_{k}^{-1}(\epsilon) , \bE_{k} -  \bE_{k+1} \rangle \notag\\
	&\quad +\epsilon \langle \bL_{r - 1}^{-1}(\epsilon) , \bE_{r-1} \rangle + \epsilon \langle \bQ_{r - 1} , \bE_{r-1} \rangle \notag\\
	&=  \langle \bL , \bQ \rangle - n +  \sum_{2 \leq k \leq r-1} \Big( \frac{1}{x_{k - 1}} - \frac{1}{x_{k}} \Big) \log|\bL_{k}(\epsilon)| - \frac{1}{x_1} \log| \bL_1(\epsilon)| + \langle \vh \vh^\trans + \bxi'(\bQ_1) ,  \bL_1^{-1}(\epsilon) \rangle  \notag 
	\\&\quad - \epsilon \sum_{1 \leq k \leq r-2} \langle \bE_{k+1} - \bE_k ,  \bQ_{k} \rangle + \sum_{1 \leq k \leq r-2} \frac{\epsilon}{x_{k}} \langle \bL_{k}^{-1}(\epsilon) ,  \bE_{k} -  \bE_{k+1} \rangle \notag\\
	&\quad +\epsilon \langle \bL_{r - 1}^{-1}(\epsilon) , \bE_{r-1} \rangle + \epsilon \langle \bQ_{r - 1} , \bE_{r-1} \rangle \label{eq:upbdtopterms}
	\end{align}
	is bounded below by
	\begin{align*}
	&\quad \langle \tilde\bL , \bQ \rangle - n - \log |\tilde\bL| +  \sum_{1 \leq k \leq r-1} \frac{1}{x_k} \log \frac{| \tilde\bL_{k + 1} |}{| \tilde\bL_k|} + \langle \vh \vh^\trans, \tilde\bL_1^{-1} \rangle + \langle \bxi'(\bQ_1) , \tilde\bL_1^{-1} \rangle \\
	&= \langle \tilde \bL , \bQ \rangle - n +  \sum_{2 \leq k \leq r-1} \Big( \frac{1}{x_{k - 1}} - \frac{1}{x_{k}} \Big) \log|\tilde\bL_{k}| - \frac{1}{x_1} \log| \tilde\bL_1| + \langle \vh \vh^\trans + \bxi'(\bQ_1) , \tilde\bL_1^{-1} \rangle. \notag 
	\end{align*}
	We will use concavity of the log determinant terms to absorb the error terms in \eqref{eq:upbdtopterms}. We use summation by parts to write the error terms in \eqref{eq:upbdtopterms} as
	\begin{align*}
	&\quad \epsilon \langle \bQ_{r - 1} , \bE_{r-1} \rangle - \epsilon \sum_{1 \leq k \leq r-2} \langle \bE_{k+1} - \bE_k ,  \bQ_{k} \rangle + \sum_{1 \leq k \leq r-2} \frac{\epsilon}{x_{k}} \langle \bL_{k}^{-1}(\epsilon) , \bE_{k} -  \bE_{k+1} \rangle + \epsilon \langle \bL_{r - 1}^{-1}(\epsilon), \bE_{r - 1} \rangle\\
	&= \epsilon \sum_{1 \leq k \leq r-2} \langle \bQ_{k+1} - \bQ_k , \bE_{k + 1} \rangle + \epsilon \langle \bQ_{1} , \bE_{1} \rangle + \sum_{1 \leq k \leq r-2} \frac{\epsilon}{x_{k}} \tr \langle \bL_{k}^{-1}(\epsilon) , \bE_{k} -  \bE_{k+1} \rangle + \epsilon \langle \bL_{r - 1}^{-1}(\epsilon), \bE_{r - 1} \rangle\\
	&\stackrel{\mathclap{\eqref{eq:upbd2}}}{=} \sum_{1 \leq k \leq r-2} \frac{\epsilon }{x_k}  \langle \bd_k - \bd_{k + 1} , \bE_{k + 1} \rangle + \epsilon \langle \bQ_{1} , \bE_{1} \rangle + \sum_{1 \leq k \leq r-2} \frac{\epsilon}{x_{k}} \langle \bL_{k}^{-1}(\epsilon) ,  \bE_{k} -  \bE_{k+1} \rangle + \epsilon \langle \bL_{r - 1}^{-1}(\epsilon), \bE_{r - 1} \rangle\\
	&\stackrel{\mathclap{\eqref{eq:upbd3}}}{=} \sum_{1 \leq k \leq r-2} \frac{\epsilon }{x_k} \langle \bL^{-1}_k(\epsilon) - \bL^{-1}_{k + 1}(\epsilon) , \bE_{k + 1} \rangle + \epsilon \langle \bQ_{1} , \bE_{1} \rangle + \sum_{1 \leq k \leq r-2} \frac{\epsilon}{x_{k}} \langle \bL_{k}^{-1}(\epsilon) , \bE_{k} -  \bE_{k+1} \rangle + \epsilon \langle \bL_{r - 1}^{-1}(\epsilon), \bE_{r - 1} \rangle\\
	&= - \epsilon \sum_{2 \leq k \leq r-1} \Big( \frac{1}{x_{k - 1}} - \frac{1}{x_{k}} \Big) \langle \bL_{k}^{-1}(\epsilon) , \bE_k \rangle + \frac{\epsilon}{x_1} \langle \bL_1^{-1}(\epsilon) , \bE_1 \rangle + \epsilon \langle \bQ_1 , \bE_1 \rangle.
	\end{align*}
	Adding and subtracting $\sum_{2 \leq k \leq r-1} \Big( \frac{1}{x_{k - 1}} - \frac{1}{x_{k}} \Big) \epsilon \langle \bL_k^{-1}(\epsilon), \bE_1 \rangle$ and using the fact that
	\begin{align*}
	&\quad - \sum_{2 \leq k \leq r-1} \Big( \frac{1}{x_{k - 1}} - \frac{1}{x_{k}} \Big) \epsilon \langle \bL_k^{-1}(\epsilon) , \bE_1 \rangle + \frac{\epsilon}{x_1}  \langle \bL_1^{-1}(\epsilon) , \bE_1 \rangle + \epsilon \langle \bQ_1 , \bE_1 \rangle
	\\&= \epsilon \langle \bL^{-1}_{r - 1} (\epsilon) , \bE_1 \rangle + \sum_{1 \leq k \leq r-2} \frac{\epsilon}{x_{k}} \langle \bL_{k}^{-1}(\epsilon) - \bL_{k + 1}^{-1}(\epsilon) , \bE_1 \rangle + \epsilon \langle \bQ_1 , \bE_1 \rangle
	\\&\stackrel{\mathclap{\eqref{eq:upbd3}}}{=}  \epsilon \langle \bd_{r - 1},  \bE_1 \rangle + \sum_{1 \leq k \leq r-2} \frac{\epsilon}{x_{k}} \langle \bd_{k} - \bd_{k + 1} , \bE_1 \rangle + \epsilon \langle \bQ_1 , \bE_1 \rangle\\
	&\stackrel{\mathclap{\eqref{eq:upbd2}}}{=} \epsilon \langle \bQ - \bQ_{r - 1} , \bE_1 \rangle + \sum_{1 \leq k \leq r-2} \epsilon \langle \bQ_{k + 1} - \bQ_{k} , \bE_1 \rangle + \epsilon \langle \bQ_1 , \bE_1 \rangle \\
	&= \epsilon \langle \bQ , \bE_1 \rangle 
	\end{align*}
	if the critical point condition \eqref{eq:critptpert2} holds, we see that \eqref{eq:upbdtopterms} is equal to
	\begin{align}
	& \langle \bL , \bQ \rangle - n +  \sum_{2 \leq k \leq r-1} \Big( \frac{1}{x_{k - 1}} - \frac{1}{x_{k}} \Big) \log|\bL_{k}(\epsilon)| - \frac{1}{x_1} \log| \bL_1(\epsilon)| + \langle \vh \vh^\trans + \bxi'(\bQ_1) , \bL_1^{-1}(\epsilon) \rangle \notag 
	\\&\quad + \sum_{2 \leq k \leq r-1} \Big( \frac{1}{x_{k - 1}} - \frac{1}{x_{k}} \Big) \epsilon \langle \bL_{k}^{-1}(\epsilon) , -\bE_k + \bE_1 \rangle +  \epsilon \langle \bQ , \bE_1 \rangle \label{eq:lowbdperterror}.
	\end{align}
	
	We can now use concavity of the log determinant terms and the first trace term to absorb the error terms. Since $x_{k - 1} < x_k$, the concavity of the log determinant [\ref{app:properties}, Proposition~\ref{prop:logconcave}] implies 
	\begin{equation}\label{eq:concavelog}
	\Big( \frac{1}{x_{k - 1}} - \frac{1}{x_{k}} \Big) \log|\bL_{k}(\epsilon)| + \Big( \frac{1}{x_{k - 1}} - \frac{1}{x_{k}} \Big) \epsilon \langle \bL_k^{-1}, - \bE_k + \bE_1 \rangle \geq \Big( \frac{1}{x_{k - 1}} - \frac{1}{x_{k}} \Big) \log|\tilde\bL_{k}|
	\end{equation}
	because $\bL_{k}(\epsilon) - \epsilon \bE_k + \epsilon \bE_1 = \tilde \bL_k$. The linearity of the trace implies
	\begin{equation}\label{eq:concavetr}
	\langle \bL, \bQ \rangle + \epsilon \langle \bQ , \bE_1 \rangle = \langle \bL + \epsilon \bE_1 , \bQ \rangle =  \langle \tilde \bL , \bQ \rangle.
	\end{equation}
	The inequalities \eqref{eq:concavelog} and \eqref{eq:concavetr} and the fact $\bL_1(\epsilon) = \tilde \bL_1$ shown in \eqref{eq:newlag} implies that \eqref{eq:upbdtopterms} is bounded below by
	\begin{align*}
	\langle \tilde \bL , \bQ \rangle - n +  \sum_{2 \leq k \leq r-1} \Big( \frac{1}{x_{k - 1}} - \frac{1}{x_{k}} \Big) \log|\tilde\bL_{k}| - \frac{1}{x_1} \log| \tilde\bL_1| + \langle \vh \vh^\trans + \bxi'(\bQ_1) , \tilde\bL_1^{-1} \rangle, \notag 
	\end{align*}
	which is what we needed to show.
\end{proof}

\subsection{Summary of the Proof} We now summarize the proof of the upper bound.

\begin{proof}[Proof of Lemma~\ref{lem:upbd}]
	For $\epsilon > 0$ and fixed sequence \eqref{eq:xsequpbd}, if we define $\bL^\epsilon$ to be equal to \eqref{eq:critptpert1}, then the minimizer $\vbQ^\epsilon$ of $\sC_r^\epsilon(\vbQ)$ satisfies the critical point conditions \eqref{eq:critptpert2} by Lemma~\ref{lem:upbdcritpt}. From Lemma~\ref{lem:upbdreduction} and Lemma~\ref{lem:upbdconcave}, these critical point conditions implies the following chain of inequalities,
	\[
	\inf_{Q} \sC_r^\epsilon(\vbQ) = \sC_r^\epsilon(\vbQ^\epsilon) = \sP_r^\epsilon(\bL^\epsilon, \vbQ^\epsilon) \geq \inf_{r,\Lambda,x,Q} \sP_r(\bL, \vex,\vbQ).
	\]
	Since $\sC^\epsilon_r(\vbQ)$ is decreasing in $\epsilon$ for fixed $\vbQ$ and $\sC_r(\vbQ)$ is continuous, we can interchange the limit with the infimum [\ref{app:properties}, Proposition~\ref{prop:interchangeinf}], so
	\[
	\lim_{\epsilon \to 0} \inf_{Q}  \sC_r^\epsilon(\vbQ) = \inf_{Q} \lim_{\epsilon \to 0} \sC_r^\epsilon(\vbQ)  =  \inf_{Q} \sC_r(\vbQ) \geq \inf_{r,\Lambda,x,Q} \sP_r(\bL,\vex,\vbQ).
	\]
	The lower bound does not depends on $r$ nor the sequence \eqref{eq:xsequpbd}, so we can take the infimum of $\sC_r$ over all sequences of the form \eqref{eq:xseqlwbd} and all discretizations to finish the proof of the upper bound. 
\end{proof}

\section{Integral Form of the Crisanti--Sommers functional}\label{sec:intform}

We will derive the integral form for the analogue of the Crisanti--Sommers formula for spherical spin glasses with vector spins. Recall the monotone functions \eqref{eq:csx} and \eqref{eq:csPhi},
\[
x(t) : [0,n] \to [0,1] \quad \text{such that} \quad x(0) = 0 \quad \text{and} \quad x(n) = 1
\]
and
\[
\Phi(t): [0,n] \to \bS^n_+ \quad \text{such that} \quad \tr( \Phi(t)) = t \quad  \text{and} \quad \Phi(0) = \bm 0 \quad \text{and} \quad \Phi(n) = \bQ.
\] 

For $t_x := x^{-1}(1) = \inf \{ t \in [0,n] \mmm 1 \leq x(t) \}$ and paths such that $|\bQ - \Phi(t_x)| > 0$ the analogue of the Crisanti--Sommers functional \eqref{eq:cs} was defined by
\begin{align}
\sC(x,\Phi) &= \frac{1}{2}\bigg( \int_0^n x(t) \langle \bxi'(\Phi(t)) + \vh \vh^\trans, \Phi'(t) \rangle \, dt + \log| \Phi(n) - \Phi(t_x)| + \int_0^{t_x} \langle \hat \Phi(t)^{-1},  \Phi'(t) \rangle \, dt \bigg) \label{eq:intformCS}
\\&= \frac{1}{2} \bigg( \langle \vh \vh^\trans, \hat \Phi(0) \rangle + \int_0^n x(t) \langle \bxi'(\Phi(t)), \Phi'(t) \rangle \, dt + \log| \Phi(n) - \Phi(t_x)| + \int_0^{t_x} \langle \hat \Phi(t)^{-1},  \Phi'(t) \rangle \, dt \bigg) \notag ,
\end{align}
where $\hat \Phi(t): [0,n] \to \R^{n \times n}$ is a decreasing function given by
\begin{equation}
\hat \Phi(t) = \int_t^n x(s) \Phi'(s) \, ds.
\end{equation}

This functional is the continuous Lipschitz extension of the discrete functional \eqref{eq:crisanti} we proved in the last section. We first observe that $\sC(x,\Phi)$ agrees with the discrete formula when $x(t)$ corresponds to a discrete probability measure on the trace.
\begin{lem}\label{lem:discreization}
	Let $\Phi(t)$ be a fixed monotone matrix path. Let $x(t)$ be a step function with $r - 1$ steps,
	\[
	x(t) = x_k \quad \text{for} \quad t_{k} \leq t < t_{k + 1}
	\]
	for $1 \leq k \leq r-1$ with boundary terms
	\[
	\quad x(t) = 0 \quad \text{for} \quad 0 \leq t < t_{1} \quad \text{and}\quad x(t) = 1 \quad \text{for} \quad t_x := t_{r - 1}  \leq t \leq 1  .
	\]
	If we define $\bQ_k := \Phi(t_k)$, then
	\[
	\sC(x,\Phi) = \sC_r(\vex,\vbQ).
	\]
\end{lem}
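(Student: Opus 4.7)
\medskip

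\noindent\textbf{Proof proposal.} The plan is a direct computation: evaluate each of the three terms defining $\sC(x,\Phi)$ on the step parameter $x$ and observe that they collapse term by term into the discrete Crisanti--Sommers functional $\sC_r(\vex,\vbQ)$. The only substantive input is the identification of $\hat\Phi$ at the breakpoints, after which everything reduces to telescoping.

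\smallskip

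\noindent\emph{Step 1: Evaluate $\hat\Phi$ at the breakpoints.} Setting $t_0 = 0$ and $t_r = n$, for each $1 \leq p \leq r-1$ the step structure of $x$ gives
\[
\hat\Phi(t_p) = \int_{t_p}^n x(s)\Phi'(s)\,ds = \sum_{k=p}^{r-1} x_k \bigl(\Phi(t_{k+1}) - \Phi(t_k)\bigr) = \sum_{k=p}^{r-1} x_k (\bQ_{k+1} - \bQ_k) = \bd_p.
\]
On $[0,t_1)$ we have $x \equiv 0$, so $\hat\Phi(t)=\hat\Phi(t_1)=\bd_1$ is constant there, and in particular $\hat\Phi(0) = \bd_1$. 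Also $t_x = t_{r-1}$ and $x_{r-1}=1$, so $\Phi(n)-\Phi(t_x)=\bQ-\bQ_{r-1} = \bd_{r-1}$.

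\smallskip

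\noindent\emph{Step 2: Expand the first integral.} Splitting $\int_0^n x(t)\langle \bxi'(\Phi(t))+\vh\vh^\trans,\Phi'(t)\rangle\,dt$ and using $\hat\Phi(0)=\bd_1$ handles the external field term:
\[
\int_0^n x(t)\langle \vh\vh^\trans,\Phi'(t)\rangle\,dt = \langle \vh\vh^\trans, \hat\Phi(0)\rangle = \langle \vh\vh^\trans, \bd_1\rangle.
\]
For the $\bxi'$ piece, since $\bxi(\bA)$ depends entrywise on $\bA$, one has $\tfrac{d}{dt}\Sum(\bxi(\Phi(t))) = \langle\bxi'(\Phi(t)),\Phi'(t)\rangle$, so on each interval $[t_k,t_{k+1})$ the integral telescopes to $\Sum(\bxi(\bQ_{k+1})-\bxi(\bQ_k))$. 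Using $x_0=0$,
\[
\int_0^n x(t)\langle \bxi'(\Phi(t)),\Phi'(t)\rangle\,dt = \sum_{1 \leq k \leq r-1} x_k\cdot \Sum\bigl(\bxi(\bQ_{k+1})-\bxi(\bQ_k)\bigr).
\]

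\smallskip

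\noindent\emph{Step 3: Expand the last integral.} On $[t_k,t_{k+1})$ with $k \geq 1$, $x$ is the constant $x_k$, so $\hat\Phi'(t) = -x_k\Phi'(t)$ and therefore
\[
\int_{t_k}^{t_{k+1}} \langle \hat\Phi(t)^{-1},\Phi'(t)\rangle\,dt = -\frac{1}{x_k}\int_{t_k}^{t_{k+1}} \frac{d}{dt}\log|\hat\Phi(t)|\,dt = \frac{1}{x_k}\log\frac{|\bd_k|}{|\bd_{k+1}|}.
\]
On $[0,t_1)$, $\hat\Phi \equiv \bd_1$ is constant, so the integral there equals $\langle \bd_1^{-1},\Phi(t_1)-\Phi(0)\rangle = \langle \bd_1^{-1},\bQ_1\rangle$. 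Summing over the pieces up to $t_x = t_{r-1}$,
\[
\int_0^{t_x} \langle \hat\Phi(t)^{-1},\Phi'(t)\rangle\,dt = \langle \bd_1^{-1},\bQ_1\rangle - \sum_{1 \leq k \leq r-2} \frac{1}{x_k}\log\frac{|\bd_{k+1}|}{|\bd_k|}.
\]

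\smallskip

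\noindent\emph{Step 4: Combine.} Adding the three pieces together, together with $\log|\Phi(n)-\Phi(t_x)| = \tfrac{1}{x_{r-1}}\log|\bQ-\bQ_{r-1}|$ (valid because $x_{r-1}=1$), the sum is exactly twice the bracket in the definition of $\sC_r(\vex,\vbQ)$ in \eqref{eq:crisanti}. This gives $\sC(x,\Phi)=\sC_r(\vex,\vbQ)$ as claimed. There is no conceptual obstacle; the only minor subtlety is to check that the vanishing $x_0=0$ correctly absorbs the $[0,t_1)$ piece of the first integral while simultaneously producing the $\langle \bd_1^{-1},\bQ_1\rangle$ term from the last integral, which is what ties the two boundary contributions to the boundary terms of $\sC_r$.
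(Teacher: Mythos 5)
Your proposal is correct and follows essentially the same route as the paper: identify $\hat\Phi$ with the $\bd_p$'s on each interval (your $\hat\Phi'(t)=-x_k\Phi'(t)$ is just the differentiated form of the paper's identity for $\hat\Phi$ on $[t_k,t_{k+1})$), telescope the $\Sum(\bxi(\Phi(t)))$ and $\log|\hat\Phi(t)|$ terms piecewise, and handle the $[0,t_1)$ boundary piece separately. No gaps.
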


\begin{proof} We first observe for $t_p \leq t < t_{p + 1}$ that,
	\begin{align}
	\hat \Phi(t) = \int^n_{t} x(t) \Phi'(t) \, dt  &= \sum_{k = p + 1}^{r-2} x_k \int_{t_{k}}^{t_{k + 1}} \Phi'(t) \, dt +  x_p \int_{t}^{t_{p + 1}} \Phi'(t) \, dt \notag
	\\&= \sum_{k = p + 1}^{r-2} x_k ( \Phi(t_{k + 1}) -  \Phi(t_{k}) ) + x_p ( \Phi(t_{p + 1}) -  \Phi(t) )  \notag
	\\&= \bd_{p + 1} + x_p ( \Phi(t_{p + 1}) -  \Phi(t) ) \label{eq:Dint}.
	\end{align}
	We now compute each of the terms in $\sC(x,\Phi)$ when $x(t)$ is piecewise constant. 
	
		\noindent (a)~~~ The identity \eqref{eq:Dint} implies $\hat \Phi(0) = \bd_1$ since $x_0 = 0$, so
		\begin{equation*}\label{eq:conttodiscstart}
		\langle \vh \vh^\trans, \hat \Phi(0) \rangle = \langle \vh \vh^\trans, \bd_1 \rangle.
		\end{equation*}
		\noindent (b)~~~ Since $x(t) = x_k$ for $t_{k} \leq t < t_{k + 1}$, [\ref{app:properties}, Proposition~\ref{prop:derivtheta}]  implies the second term in \eqref{eq:intformCS} simplifies to
		\begin{alignat*}{2}
		\int_0^n x(t) \langle \bxi'(\Phi(t)), \Phi'(t) \rangle \, dt = \sum_{k = 0}^{r - 1} x_k \int_{t_k}^{t_{k + 1}} \langle \bxi'(\Phi(t)), \Phi'(t) \rangle \, dt = \sum_{1 \leq k \leq r-1} x_k\cdot \Sum  \big( \bxi (\bQ_{k + 1}) - \bxi (\bQ_{k})\big). %&&\quad \color{gray} \text{fundamental theorem}
		\end{alignat*}
		\noindent (c)~~~  Since $\Phi(t_x) = \bQ_{r - 1}$ by definition,
		\begin{equation*}
		\log| \Phi(n) - \Phi(t_x)| = \log| \bQ - \bQ_{r - 1}|.
		\end{equation*}
		\noindent (d)~~~ For almost every $t_p < t < t_{p + 1}$ the identity \eqref{eq:Dint} and Proposition~\ref{prop:derivlogdet} implies
		\[
		\frac{d}{dt} \bigg( -\frac{1}{x_p} \log | \hat \Phi(t) | \bigg) = \frac{d}{dt} \bigg( -\frac{1}{x_p} \log | \bd_{p + 1} + x_p ( \Phi(t_{p + 1}) -  \Phi(t) )  | \bigg) = \langle \hat \Phi(t)^{-1},  \Phi'(t) \rangle
		\]
		so the fundamental theorem of calculus implies that
		\begin{equation*}
		\int_{t_1}^{t_x} \langle \hat \Phi(t)^{-1},  \Phi'(t) \rangle \, dt = \sum_{k = 1}^{r - 2} \int_{t_k}^{t_{k + 1}} \langle \hat \Phi(t)^{-1},  \Phi'(t) \rangle \, dt = - \sum_{1 \leq k \leq r-2} \frac{1}{x_k} \log \frac{|\bd_{k + 1}|}{|\bd_k|}
		\end{equation*}
		and since $x(t) = 0$ for $0 \leq t < t_1$, $\hat\Phi(t) = \bd_1$ the boundary term is
		\begin{equation*}\label{eq:conttodiscend}
		\int_{0}^{t_1} \langle \hat \Phi(t)^{-1},  \Phi'(t) \rangle \, dt = \langle  \bd_1^{-1}, \Phi(t_1) \rangle -  \langle  \bd_1^{-1}, \Phi(0) \rangle = \langle  \bd_1^{-1}, \bQ_1 \rangle.
		\end{equation*}
		\noindent
	Substituting the formulas derived in (a) to (d) into $\sC(x,\Phi)$ finishes the proof.
\end{proof}
Lemma~\ref{lem:discreization} implies that $\sC(x,\Phi)$ evaluated at a piecewise constant c.d.f. corresponds to $\sC_r(\vex, \vbQ)$ evaluated at some sequence of the form \eqref{eq:xqseqcs}. To see that every discrete path encoded by $(\vex, \vbQ)$ corresponds to some $(x,\Phi)$, consider the sequences
\begin{equation}\label{eq:xqseqcs}
\begin{linsys}{8}
x_{-1} &=& 0 &\leq &x_{0} &\leq &  x_{1} &\leq& \dots&\leq &x_{r - 2}  &\leq &x_{r - 1} &=& 1 \\
&& \bm 0 &= &\bQ_0 & \leq & \bQ_1  &\leq& \dots&\leq &\bQ_{r - 2} &\leq &\bQ_{r-1} &<& \bQ_r &=& \bQ
\end{linsys}~.
\end{equation}
Taking $t_k := \tr(\bQ_k)$ we define a Lipschitz path $\Phi$ by taking $\Phi(t_k) = \bQ_k$ at each point $t_k$ and interpolate linearly,
\[
\Phi(t_k) = \bQ_k, \qquad \Phi(t) =  \frac{t_{k + 1} - t}{ t_{k + 1} - t_k}\Phi(t_k)  + \frac{t - t_k}{ t_{k + 1} - t_k} \Phi(t_{k + 1}) \qquad \text{ for } t_{k} \leq t < t_{k + 1},
\]
and a piecewise constant c.d.f. $x(t) = x_k$ for $t_{k} \leq x_k < t_{k + 1}$. Applying Lemma~\ref{lem:discreization} implies that $\sC_r(\vex, \vbQ)$ evaluated at any sequence of the form \eqref{eq:xqseqcs} corresponds to $\sC(x,\Phi)$ for some $(x,\Phi)$. This implies that
\begin{equation}\label{eq:upbdintform}
\inf_{r,x,Q} \sC_r(\vex,\vbQ) \geq \inf_{x,\Phi} \sC(x,\Phi),
\end{equation}
since infimum on the right is over all c.d.f.s and not necessarily piecewise constant ones. 

The opposite inequality is a bit trickier to show. We first show that $\sC(x,\Phi)$ is locally Lipschitz, which will imply that the integral form of the functional is the Lipschitz extension of the functional evaluated on discrete paths. The functional is not well defined when $|\bQ - \Phi(t_x)| = 0$ because of the log determinant term, so we will show that the functional is Lipschitz if we restrict the domain to matrix paths such that $|\bQ - \Phi(t_x)|$ is uniformly bounded away from $0$.

Let $T \in [0,1)$ and $L > 0$. Consider the compact set
\[
A_{T,L} = \{ (x,\Phi) \mmm x(t) = 1 \text{ for } t \geq T, \| (\bQ - \Phi(t_x) )^{-1} \|_\infty \leq L \}.
\]
This set is closed because any convergent sequence $(x_n, \Phi_n)$ must satisfy the uniform bounds,
\[
x_n(t) = 1 \text{ for } t \geq T \quad \text{and} \quad  \| (\bQ - \Phi_n (x_n^{-1}(1)) )^{-1} \|_\infty \leq L,
\]
for all $n$, so its limit point must as well. Furthermore, the product of the space of c.d.f.s on $[0,n]$ equipped with the $\|\cdot\|_1$ norm and the space of Lipschitz paths with fixed endpoints equipped with the $\| \cdot \|_\infty$ norm is compact by Prokhorov's theorem and the Arzel\`a--Ascoli theorem. Since $A_{T,L}$ is a closed subset of a compact set it is compact. We will show that $\sC(x,\Phi)$ is Lipschitz on the compact set $A_{T,L}$. 

\begin{lem}\label{lem:lip}
	Let $(x_1, \Phi_1), (x_2, \Phi_2) \in A_{T,L}$. There exists a constant $C_L$ that only depends on the fixed parameters of the model and the uniform bound $L$ on $\| (\bQ - \Phi_n (x_n^{-1}(1)) )^{-1} \|_\infty$ such that
	\[
	| \sC( x_1, \Phi_1) - \sC( x_2, \Phi_2) | \leq C_L (\| x_1 - x_2\|_1 + \| \Phi_1 - \Phi_2 \|_\infty),
	\]
	where
	\[
	\| x_1 - x_2\|_1 = \int_{0}^{n} | x_1(t) - x_2(t) | \, dt \quad \text{and} \quad  \| \Phi_1 - \Phi_2 \|_\infty = \max_{i,j \leq n} \biggr( \sup_{t \in [0,n]} | \Phi^{i,j}_1(t) - \Phi^{i,j}_2(t)| \biggr).
	\]
\end{lem}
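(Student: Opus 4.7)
The plan is to decompose $\sC(x,\Phi)$ into tractable pieces and estimate each using integration by parts, exploiting the uniform bound $\hat\Phi(t) \geq \hat\Phi(t_x) \geq (nL)^{-1}\bI$ on $[0, t_x]$, which follows from $\|(\bQ - \Phi(t_x))^{-1}\|_\infty \leq L$ and the monotonicity of $\hat\Phi$. This bound ensures $\|\hat\Phi(t)^{-1}\|_\infty \leq nL$ uniformly on $[0,t_x]$ and makes matrix inversion Lipschitz on the relevant set.

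The first step is to rewrite the functional in a form that decouples the awkward dependence on $t_x$. Applying $\frac{d}{dt}\log|\hat\Phi(t)| = -x(t)\langle\hat\Phi(t)^{-1},\Phi'(t)\rangle$ and integrating on $[0,t_x]$ gives
\[
\log|\bQ - \Phi(t_x)| + \int_0^{t_x}\langle\hat\Phi(t)^{-1},\Phi'(t)\rangle\,dt = \log|\hat\Phi(0)| + \int_0^{t_x}(1-x(t))\langle\hat\Phi(t)^{-1},\Phi'(t)\rangle\,dt,
\]
so that $\sC(x,\Phi) = \tfrac{1}{2}(A(x,\Phi) + \log|\hat\Phi(0)| + B(x,\Phi))$ with $A := \int_0^n x\langle\bxi'(\Phi)+\vh\vh^\trans,\Phi'\rangle\,dt$ and $B := \int_0^{t_x}(1-x)\langle\hat\Phi^{-1},\Phi'\rangle\,dt$; the only residual $t_x$-dependence lies in the upper limit of $B$, whose integrand carries the vanishing factor $(1-x)$.

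For $A$, integration by parts with $g(\bA) := \Sum(\bxi(\bA)) + \langle\vh\vh^\trans,\bA\rangle$ gives $A(x,\Phi) = g(\bQ) - \int_0^n g(\Phi(t))\,dx(t)$, and the difference across two paths splits as $\int(g(\Phi_1)-g(\Phi_2))\,dx_1 + \int g(\Phi_2)\,d(x_1-x_2)$. The first piece is bounded by the Lipschitz constant of $g$ on $[\bm 0,\bQ]$ times $\|\Phi_1-\Phi_2\|_\infty$, and a second integration by parts converts the second piece to $-\int(x_1-x_2)\frac{d}{dt}g(\Phi_2)\,dt$, bounded by $\|x_1-x_2\|_1$ times the sup of $|\langle\bxi'(\Phi_2)+\vh\vh^\trans,\Phi_2'\rangle|$. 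The $\log|\hat\Phi(0)|$ term is handled by Lipschitzness of $\log|\cdot|$ on matrices with inverses bounded by $nL$, combined with the estimate $\|\hat\Phi_1(0) - \hat\Phi_2(0)\|_\infty \leq \|\Phi_1-\Phi_2\|_\infty + \|x_1-x_2\|_1$, which follows from the same integration-by-parts trick applied to $\hat\Phi(0) = \int_0^n x(s)\Phi'(s)\,ds$ (boundary terms vanish because $\Phi_1(n) = \Phi_2(n) = \bQ$ and $\Phi_i(0)=\bm 0$).

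The hardest step is $B$, because $t_{x_i}$ depends on $x_i$ in a non-Lipschitz way. I would assume without loss of generality $t_{x_1}\le t_{x_2}$ and split into the common interval $[0,t_{x_1}]$ and the mismatched piece $(t_{x_1},t_{x_2}]$. On the mismatched piece only the $(1-x_2)\langle\hat\Phi_2^{-1},\Phi_2'\rangle$ term survives; crucially $x_1 \equiv 1$ there, so $(1-x_2)(t) = |x_1(t)-x_2(t)|$ pointwise, and $\|\hat\Phi_2(t)^{-1}\|_\infty\le nL$ on $[0,t_{x_2}]$ yields a contribution bounded by $C_L\|x_1-x_2\|_1$. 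On $[0,t_{x_1}]$ both inverses are uniformly bounded, and I telescope
\[
(1-x_1)\langle\hat\Phi_1^{-1},\Phi_1'\rangle - (1-x_2)\langle\hat\Phi_2^{-1},\Phi_2'\rangle = (x_2-x_1)\langle\hat\Phi_1^{-1},\Phi_1'\rangle + (1-x_2)\langle\hat\Phi_1^{-1}-\hat\Phi_2^{-1},\Phi_1'\rangle + (1-x_2)\langle\hat\Phi_2^{-1},\Phi_1'-\Phi_2'\rangle.
\]
The first piece gives $\|x_1-x_2\|_1$; the second uses Lipschitzness of inversion plus $\|\hat\Phi_1-\hat\Phi_2\|_\infty \le C(\|\Phi_1-\Phi_2\|_\infty+\|x_1-x_2\|_1)$ (same trick as above); the third is handled by a final integration by parts in $t$ to move $\Phi_1'-\Phi_2'$ onto the bounded BV function $(1-x_2)\hat\Phi_2^{-1}$, whose differential involves only $dx_2$ and the bounded quantity $d\hat\Phi_2^{-1} = \hat\Phi_2^{-1}x_2\Phi_2'\hat\Phi_2^{-1}\,dt$. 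Summing all contributions yields the Lipschitz estimate with $C_L$ depending only on $L$, $\bQ$, and the fixed model parameters.
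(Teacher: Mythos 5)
Your proposal is correct, but it takes a genuinely different route from the paper. The paper splits the difference through the intermediate point $(x_1,\Phi_2)$ (after ordering $t_{x_1}\leq t_{x_2}$ so that this point stays in $A_{T,L}$), and for the hard term $\int_0^{t_x}\langle\hat\Phi(t)^{-1},\Phi'(t)\rangle\,dt$ it bounds Gateaux derivatives along admissible segments (checking that convex combinations of paths and of c.d.f.s remain in $A_{T,L}$, via Proposition~\ref{prop:inverseconvex}) and invokes the mean value theorem; the dependence on $t_x$ in the $x$-direction is neutralized by the invariance observation \eqref{eq:notdependsup}, which lets one freeze a common upper limit $\hat t$. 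You instead use the identity $\frac{d}{dt}\log|\hat\Phi(t)|=-x(t)\langle\hat\Phi(t)^{-1},\Phi'(t)\rangle$ to rewrite the log-determinant-plus-integral block as $\log|\hat\Phi(0)|$ plus $\int_0^{t_x}(1-x)\langle\hat\Phi^{-1},\Phi'\rangle\,dt$, which eliminates the explicit $\Phi(t_x)$ term and confines the $t_x$-dependence to an integrand that vanishes where the two c.d.f.s agree at the value $1$; the mismatched interval then contributes exactly $\int|x_1-x_2|$, and the rest is handled by zeroth-order telescoping plus integration by parts against the BV function $(1-x_2)\hat\Phi_2^{-1}$ to absorb $\Phi_1'-\Phi_2'$. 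Both arguments rest on the same quantitative inputs (the lower bound $\hat\Phi(t)\geq\bQ-\Phi(t_x)\geq(\sqrt n L)^{-1}\bI$ on $[0,t_x]$, $\|\Phi'\|_\infty\leq1$, and Lipschitzness of $\bxi$, the determinant and matrix inversion on the relevant compact sets), but your reformulation trades the paper's directional-derivative/MVT machinery and convexity checks for direct first-order estimates, at the modest cost of some routine care with Lebesgue--Stieltjes integration by parts (jumps of $x_2$ against the continuous path $\Phi_1-\Phi_2$ cause no trouble) and the a.e.\ chain rule for the Lipschitz map $t\mapsto\log|\hat\Phi(t)|$, which you assert but should verify; with those details filled in, your argument yields the same Lipschitz bound with a constant depending only on $L$, $n$, $\vh$ and the temperature parameters.
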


\begin{proof}
	Without loss of generality, suppose that $x_1^{-1}(1) \leq x_2^{-1}(1)$. If this is the case, then $\sC(x_1, \Phi_2)$ is also well defined since $\bQ - \Phi_2( x_1^{-1}(1) ) \geq \bQ - \Phi_2( x_2^{-1}(1) )$ by monotonicity, so $(x_1, \Phi_2) \in A_{T,L}$. Therefore,
	\[
	| \sC( x_1, \Phi_1) - \sC( x_2, \Phi_2) | \leq | \sC( x_1, \Phi_1) - \sC( x_1, \Phi_2) | + | \sC( x_1, \Phi_2) - \sC( x_2, \Phi_2) |.
	\]
	Therefore, it suffices to show that the functional is Lipschitz in each of its coordinates,
	\[
	| \sC( x, \Phi_1) - \sC( x, \Phi_2) | \leq C_L \| \Phi_1 - \Phi_2 \|_\infty \quad \text{and} \quad | \sC( x_1, \Phi) - \sC( x_2, \Phi) | \leq C_L \| x_1 - x_2 \|_1.
	\]
	We start by showing the first inequality. The computation to show the functional is Lipschitz in $x$ for fixed $\Phi$ follows the similar computations. 
	\\\\
	\textit{Lipschitz in $\Phi$:} Fix $x(t)$ and consider $( x, \Phi_1), (x,\Phi_2) \in A_{T,L}$. We first show that the functional is Lipschitz with respect to the infinity norm on matrix paths,
	\begin{equation}\label{eq:lipphi}
	| \sC( x, \Phi_1) - \sC( x, \Phi_2) | \leq C_L \| \Phi_1 - \Phi_2 \|_\infty.
	\end{equation}
	We will show that each term in $\sC(x,\Phi)$ is Lipschitz in $\Phi$ for fixed $x$. 
	
		\noindent (a)~~~ The integrand consists of functions of bounded variation, so we can integrate by parts to conclude
		\begin{align*}
		\langle \vh \vh^\trans, \hat \Phi(0)  \rangle &= \int_0^n x(t) \cdot \frac{d}{dt} \langle \vh \vh^\trans, \Phi(t) \rangle \, dt
		%\\&=   x(n) \langle \vh \vh^\trans, \Phi(n) \rangle -  x(0) \langle \vh \vh^\trans, \Phi(0) \rangle - \int_0^n \langle \vh \vh^\trans, \Phi(t) \rangle \, dx(t) 
		\\&=   \langle \vh \vh^\trans, \bQ \rangle - \int_0^n \langle \vh \vh^\trans, \Phi(t) \rangle dx(t).
		\end{align*}
		Since $x(0) = 0$ and $x(n) = 1$, we have
		\begin{align*}
		|\langle \vh \vh^\trans, \hat \Phi_1(0) \rangle - \langle \vh \vh^\trans, \hat \Phi_2(0) \rangle| &\leq \int_0^n \Big|\langle \vh \vh^\trans, \Phi_1(t) - \Phi_2(t) \rangle  \Big| \, dx(t) 
		\\&\leq n^2 \| \vh \vh^\trans \|_\infty \| \Phi_1 - \Phi_2 \|_\infty.
		\end{align*}
		\noindent (b)~~~ The second term can be bounded in a similar manner using integration by parts,	
		\begin{align*}
		\int_0^n x(t) \langle \bxi'(\Phi(t)), \Phi'(t) \rangle \, dt &= \int_0^n x(t) \cdot \frac{d}{dt} \Sum (\bxi(\Phi(t))) \, dt 
		%\\&= x(n)   \Sum (\bxi(\Phi(n)))  - x(0)   \Sum (\bxi(\Phi(0))) - \int_0^n \Sum (\bxi(\Phi(t))) \, dx(t)
		\\&=  \Sum (\bxi(\bQ))  - \int_0^n \Sum (\bxi(\Phi(t))) \, dx(t).
		\end{align*}
		Since $\bxi(t)$ is a power series and $\Phi$ is bounded, we can conclude
		\begin{align*}
		\bigg| \int_0^n x(t) \langle \bxi'(\Phi_1(t)), \Phi_1'(t) \rangle \, dt - \int_0^n x(t) \langle \bxi'(\Phi_2(t)), \Phi_2'(t) \rangle \, dt \bigg| &\leq n^2 \|\bxi'(1)\|_\infty \| \Phi_1 - \Phi_2 \|_\infty.
		\end{align*}
		\noindent(c)~~~ The condition $\| (\bQ - \Phi(t_x))^{-1} \|_\infty \leq L$ and equivalence of the infinity norm and operator norm on $\R^{n \times n}$ implies that 
		\[
		\lambda_{min} ( \bQ - \Phi(t_x) ) = \frac{1}{\lambda_{max} ( ( \bQ - \Phi(t_x) )^{-1} )} \geq \frac{1}{\sqrt{n} \| (\bQ - \Phi(t_x))^{-1} \|_\infty}  \geq \frac{1}{\sqrt{n} L}.
		\]
		The determinant is the product of eigenvalues so $| \bQ - \Phi(t_x) | \geq  (\sqrt{n} L )^{-n} > 0$. Furthermore, $\log (t)$ is Lipschitz on $[ (\sqrt{n} L )^{-n},\infty)$ and $| \bA |$ is a polynomial of the entires of $\bA$, so there exists universal constants $C_1,C_2$ that depends only on $L$ and the dimension $n$ such that
		\[
		| \log| \bQ - \Phi_1(t_x) | - \log| \bQ - \Phi_2(t_x) | | \leq C_1\big| | \bQ - \Phi_1(t_x) | - | \bQ - \Phi_2(t_x) | \big| \leq C_2 \| \Phi_1 - \Phi_2 \|_\infty.
		\]
		\noindent (d)~~~ To show the last term is Lipschitz, we will show that all of its unit directional derivatives are uniformly bounded and apply the mean value theorem to conclude Lipschitz continuity. Let $\Phi,\Psi$ be arbitrary matrices such that $(x,\Phi), (x,\Psi) \in A_{T,L}$. By monotonicity $(1 -\eps)\Phi +  \eps\Psi$ is also a Lipschitz monotone path and since the matrix inverse is convex [\ref{app:properties}, Proposition~\ref{prop:inverseconvex}], 
		\[
		\bigr( \bQ - ( \eps\Phi(t_x) + (1 -\eps) \Psi(t_x) ) \bigl)^{-1} \leq \eps ( \bQ - \Phi(t_x) ) )^{-1}  + (1 - \eps) ( \bQ - \Phi(t_x) ) )^{-1}
		\]
		so $(x,(1 -\eps)\Phi + \eps \Psi ) \in A_{T,L}$ for all $\eps \in [0,1]$.
		
		If we set $\Theta(t) = \frac{ \Psi(t) - \Phi(t) }{ \| \Psi - \Phi\|_\infty}$, then for all $\epsilon \in [0, \| \Psi - \Phi\|_\infty]$,
		\[
		(x,\Phi + \epsilon \Theta) = (x,(1 -\eps)\Phi + \eps \Psi) \in A_{t,L},
		\]
		Consider the function
		\[
		f(\Phi) = \int_0^{t_x} \langle \hat \Phi(t)^{-1},  \Phi'(t) \rangle \, dt.
		\]
		We will show that the directional derivatives of $f$ in the admissible unit direction $\Theta$ is uniformly bounded by some constant $C$ that only depends on the fixed parameters of the model and the bound $L$, 
		\[
		\bigg| \frac{d}{d\epsilon} f(\Phi + \epsilon \Theta) \Big|_{\epsilon = 0} \bigg| \leq C.
		\]
		Using [\ref{app:properties}, Proposition~\ref{prop:partialinverse}] to compute the derivative of the inverse,
		\begin{align*}
		\frac{d}{d\epsilon}f(\Phi + \epsilon \Theta) \bigg|_{\epsilon = 0} &= \frac{d}{d\epsilon}\int_0^{t_x} \langle (\hat \Phi(t) + \epsilon \hat \Theta)^{-1},  \Phi'(t) + \epsilon \Theta'(t) \rangle \, dt \bigg|_{\epsilon = 0}
		\\&= -\int_0^{t_x} \langle \hat\Phi(t)^{-1} \hat\Theta(t) \hat\Phi(t)^{-1},  \Phi'(t) \rangle \, dt + \int_0^{t_x} \langle \hat \Phi(t)^{-1},  \Theta'(t) \rangle \, dt
		\\&= -\int_0^{t_x} \int^n_t x(s) \langle \hat\Phi(t)^{-1} \Theta'(s) \hat\Phi(t)^{-1},  \Phi'(t) \rangle \, ds dt + \int_0^{t_x} \langle \hat \Phi(t)^{-1},  \Theta'(t) \rangle \, dt.
		\end{align*}
		Since $\tr(\bA \bB \bC) = \tr( \bC \bA \bB)$, we integrate by parts to conclude
		\begin{align*}
		&-\int_0^{t_x} \int^n_t x(s) \langle \hat\Phi(t)^{-1} \Phi'(t) \hat\Phi(t)^{-1},  \Theta'(s) \rangle \, ds dt 
		\\&= \int_0^{t_x} x(t) \langle \hat\Phi(t)^{-1} \Phi'(t) \hat\Phi(t)^{-1},  \Theta(t) \rangle \, dt  + \int_0^{t_x} \int^n_t \langle \hat\Phi(t)^{-1} \Phi'(t) \hat\Phi(t)^{-1},  \Theta(s) \rangle \, dx(s) dt
		\end{align*}
		and
		\begin{align*}
		\int_0^{t_x} \langle \hat \Phi(t)^{-1},  \Theta'(t) \rangle \, dt = -\langle \hat \Phi(t_x)^{-1},  \Theta(t_x)  \rangle  -\int_0^{t_x} x(t) \langle \hat\Phi(t)^{-1}  \Phi'(t) \hat\Phi(t)^{-1},  \Theta(t) \rangle \, dt.
		\end{align*}
		Since $\| \Theta \|_\infty = 1$, $\|\Phi'\|_\infty \leq 1$, and $\| \hat \Phi^{-1}(t) \|_\infty \leq \| \bQ - \Phi(t_x) \|_\infty \leq L$, we can replace each entry of the matrices in the integrand with its highest possible value  to get the crude upper bound
		\[
		\bigg| \frac{d}{d\epsilon} f(\Phi + \epsilon \Theta) \Big|_{\epsilon = 0} \bigg| \leq 4n^4 L^2.
		\]
		
		This upper bound holds for any starting point $\Phi$ and all admissible directions $\Theta$. We will now show that this implies that our functional is Lipschitz. Given $\Phi_1$ monotone paths $\Phi_2$, we have
		\[
		\Phi_2 = \Phi_1 + \|\Phi_1 - \Phi_2\|_\infty \Theta,
		\]
		where $\Theta = \frac{\Phi_2 - \Phi_1}{ \|\Phi_1 - \Phi_2\|_\infty  }$. Consider the function $g: [0, \|\Phi_1 - \Phi_2\|_\infty] \to \R$, 
		\[
		g(t) =  f(\Phi_1 + t \Theta).
		\]
		First notice that $\Phi_1 + t \Theta$ is a monotone path and $(x, \Phi_1 + t \Theta) \in A_{T,L}$ for $t \in [0, \|\Phi_1 - \Phi_2\|_\infty]$. For $t \in (0, \|\Phi_1 - \Phi_2\|_\infty)$ the uniform bound on the directional derivative implies
		\[
		|g'(t)| = \bigg| \frac{d}{d\epsilon}f(\Phi_1 + t \Theta + \epsilon \Theta) \Big|_{\epsilon = 0} \bigg| \leq 4n^4 L^2.
		\]
		Since $g(t)$ is continuous, the mean value theorem implies that
		\[
		\bigg| \int_0^{t_x} \langle \hat \Phi_1(t)^{-1},  \Phi_1'(t) \rangle \, dt - \int_0^{t_x} \langle \hat \Phi_2(t)^{-1},  \Phi_2'(t) \rangle \, dt \bigg| = |g(0) - g( \|\Phi_2 - \Phi_1 \|_\infty ) | \leq 4n^4 L^2 \|\Phi_1 - \Phi_2\|_\infty,
		\]
		which proves that $f(\Phi)$ is Lipschitz. 
	\\\\
	\noindent Combining the bounds proved in (a) to (d) completes the proof for \eqref{eq:lipphi}. 
	\\\\
	\textit{Lipschitz in $x$:} Showing the functional is Lipschitz in $x$ follows from a similar computation. Fix $\Phi(t)$ and consider $( x_1, \Phi), ( x_2, \Phi) \in A_{T,L}$. We now show that the functional is Lipschitz with respect to the $L^1$ norm on monotone functions,
	\begin{equation}\label{eq:lipx}
	| \sC( x_1, \Phi) - \sC( x_2, \Phi) | \leq C_L \| x_1 - x_2 \|_1.
	\end{equation}
	We will show that each term in $\sC(x,\Phi)$ is Lipschitz in $x$ for fixed $\Phi$. To make the dependence of $\hat \Phi$ on $x_1$ and $x_2$ explicit, we define
	\[
	\hat \Phi_{x_1}(t) :=  \int_t^n x_1(s) \Phi'(s) \, ds \quad \text{and} \quad \hat \Phi_{x_2}(t) :=  \int_t^n x_2(s) \Phi'(s) \, ds.
	\]
	
		\noindent (a)~~~ The matrix path satisfies $\|\Phi\|_\infty \leq 1$, so
		\begin{align*}
		|\langle \vh \vh^\trans, \hat \Phi_{x_1}(0) \rangle - \langle \vh \vh^\trans, \hat \Phi_{x_2}(0) \rangle| &\leq \int_0^n |x_1 - x_2| \cdot |\langle \vh \vh^\trans, \Phi'(t) \rangle  | \, dt 
		\\&\leq n^2 \| \vh \vh^\trans \|_\infty \| x_1 - x_2 \|_1.
		\end{align*}
		\noindent (b)~~~ The matrix path satisfies $\|\bxi'(\Phi) \|_\infty \leq \|\bxi'(1) \|_\infty$ and $\| \Phi' \|_\infty \leq 1$, so
		\begin{align*}
		\bigg| \int_0^n x_1(t) \langle \bxi'(\Phi(t)), \Phi'(t) \rangle \, dt - \int_0^n x(t) \langle \bxi'(\Phi(t)), \Phi'(t) \rangle \, dt \bigg| \leq n^2 \|\bxi'(1) \|_\infty \| x_1 - x_2 \|_1.
		\end{align*}
		\noindent(c)~~~ By observation \eqref{eq:notdependsup}, we can replace the bound with $\hat t = \sup( \{ t \leq T \mmm \| (\bQ - \Phi(t))^{-1} \|_\infty \leq L \} )$. We need to show
		\[
		f(x) := \log| \Phi(n) - \Phi(\hat t)| + \int_0^{\hat t} \langle \hat \Phi_{x}(t)^{-1},  \Phi'(t) \rangle \, dt,
		\]
		is Lipschitz in $x$. The log determinant term is independent of $x$, so we only need to show that the integral is Lipschitz in $x$. We will show that all directional derivatives of $x$ with respect to an admissible unit direction is bounded. Let $y(t)$ be another monotone function such that $y(t) = 1$ for all $t \geq \hat t$. It is easy to see that $( (1 - \epsilon) x(t) + \epsilon y(t), \Phi) \in A_{T,L}$, so $z(t) = \frac{y(t) - x(t)}{ \| x - y\|_\infty}$ is an admissible unit direction. Since $z(t) = 0$ for $t \geq \hat t$, Fubini's theorem implies that
		\begin{align*}
		\frac{d}{d\eps} f(x(t) + \epsilon z(t)) \bigg|_{\eps = 0} &= - \int_0^{\hat t} \int_t^{\hat t} z(s) \langle \hat \Phi_{x}(t)^{-1} \Phi'(s) \hat \Phi_{x}(t)^{-1} ,  \Phi'(t) \rangle \, ds dt
		\\&= - \int_0^{\hat t} \int_0^{s} z(s) \langle \hat \Phi_{x}(t)^{-1} \Phi'(t) \hat \Phi_{x}(t)^{-1} ,  \Phi'(s) \rangle \, dt ds.
		\end{align*}
		Since $\|z\|_1 = 1$, $\|\Phi'\|_\infty \leq 1$, and $\| \hat \Phi_x^{-1}(t) \|_\infty \leq \| \bQ - \Phi(\hat t) \|_\infty \leq L$ we can replace each entry of the matrices in the integrand with its highest possible value to get the crude upper bound
		\[
		\bigg| \frac{d}{d\epsilon} f(x + \epsilon z(t) ) \Big|_{\epsilon = 0} \bigg| \leq n^4 L^2.
		\]	
		The mean value theorem implies that
		\[
		| f(x_1(t)) - f(x_2(t) )| \leq  n^4 L^2 \| x_1 - x_2\|_1.
		\]
		\noindent Combining the bounds proved in (a) to (c) completes the proof for \eqref{eq:lipx}. 
\end{proof}

Since $\sC(x,\Phi)$ restricted to $A_{T,L}$ is Lipschitz continuous by Lemma~\ref{lem:lip}, the extreme value theorem implies that $\sC$ attains its minimum at some $(x_{T,L},\Phi_{T,L}) \in A_{T,L}$. We will show the global minimizer of $\sC(x,\Phi)$ over its domain lies in $A_{\hat T, \hat L}$ for some $\hat T$ and $\hat L$ that only depends on the fixed parameters of the model,
\[
\inf\{ \sC(x,\Phi) \mmm (x,\Phi) \in A_{T,L} \text{ for some $T \in [0,1)$ and $L > 0$}   \} = \inf\{ \sC(x,\Phi) \mmm (x,\Phi) \in A_{\hat T, \hat L} \}.
\]
This fact is enough to conclude that
\begin{equation}\label{eq:lwbdintform}
\inf_{x,\Phi} \sC(x, \Phi) = \inf_{A_{\hat T, \hat L}} \sC(x, \Phi) = \inf_{r,A_{\hat T, \hat L}} \sC_r(\vex,\vbQ) \geq \inf_{r,x,Q} \sC_r(\vex,\vbQ) .
\end{equation}
In the second inequality, we used the fact that $\sC(x, \Phi)$ is Lipchitz on $A_{\hat T, \hat L}$, so its value agrees with the limit points of discrete c.d.f. The bounds \eqref{eq:upbdintform} and \eqref{eq:lwbdintform} implies that
\[
\inf_{x,\Phi} \sC(x, \Phi) = \inf_{r,x,Q} \sC_r(\vex, \vbQ).
\]

It remains to find an explicit formula for $\hat T$ and $\hat L$. There quantities are derived from the necessary conditions satisfied by the minimizer of $\sC(x,\Phi)$ obtained by perturbing the critical points of $\sC(x,\Phi)$. 
\begin{lem}\label{lem:supportmin}
	Let $T \in [0,1)$ and $L > 0$. Let
	\[
	(x,\Phi) = \mathrm{argmin}_{x,\Phi \in A_{T,L} } \sC(x,\Phi)
	\] 
	If $\mu$ is the probability measure on $[0,n]$ associated with $x$, 
	\[
	x(t) = \mu( [0,t) ),
	\]
	and 
	\[
	\hat T = \sup \{ t \leq T \mmm \| (\bQ - \Phi(t) )^{-1} \|_\infty \leq L \}
	\]
	is the largest feasible point in the support of $\mu$, then 
	\[
	\mu \big( \big\{ t \leq \hat T \mmm \langle \vh \vh^\trans + \bxi'(\bQ), \bQ \rangle + 1 - \log| \bQ|  + \log | \bQ - \Phi(t)|  \geq 0 \big\} \big) = 1.
	\]
\end{lem}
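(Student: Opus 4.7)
My plan is a first-order variational argument at the minimizer $(x,\Phi)$, perturbing only the c.d.f.\ $x$ while keeping $\Phi$ fixed. Note first that $\mu$ is automatically supported in $[0,\hat T]$: since $\|(\bQ - \Phi(\cdot))^{-1}\|_\infty$ is non-decreasing in $t$ and is bounded by $L$ exactly on $[0,\hat T]$, the constraint $(x,\Phi) \in A_{T,L}$ forces $t_x \leq \hat T$. Using observation~\eqref{eq:notdependsup}, I replace $t_x$ in the log-determinant and integral terms of $\sC$ by a fixed $\hat t \in (\hat T, T]$ with $|\bQ - \Phi(\hat t)| > 0$; this makes $\hat t$ unaffected by small perturbations of $\mu$ supported in $[0,\hat T]$.

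Suppose for contradiction that $\mu(B) > 0$, where $B := \{t \in [0,\hat T] \colon F(t) < 0\}$ and $F$ denotes the quantity displayed in the lemma. Because $F$ is continuous and strictly decreasing in $t$ (as $\Phi$ is monotone and $\log|\bQ - \Phi(t)|$ is strictly decreasing in this order), while $F(0) = \langle\vh\vh^\trans + \bxi'(\bQ),\bQ\rangle + 1 > 0$, the set $B$ is of the form $(t^*, \hat T]$ for some $t^* \in [0,\hat T)$ with $F(t^*) = 0$. I pick $s \in \supp(\mu) \cap B$ and $s' \in [0, t^*)$ with $F(s') > 0$, and consider the admissible perturbation $\mu_\epsilon = \mu + \epsilon(\nu_{s'} - \nu_s)$ for small $\epsilon > 0$, where $\nu_s, \nu_{s'}$ are probability measures concentrated in small neighborhoods of $s$ and $s'$ respectively (using $\nu_s = \delta_s$ if $\mu$ has an atom at $s$, and smearing across a small interval inside $\supp(\mu)$ otherwise). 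Minimality of $(x,\Phi)$ then forces $\tfrac{d}{d\epsilon}\sC(x_\epsilon, \Phi)\big|_{\epsilon = 0^+} \geq 0$.

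I compute the first variation as the sum of two pieces. The contribution from $\int_0^n x_\epsilon(t) g(t)\, dt$, with $g(t) = \langle\bxi'(\Phi(t)) + \vh\vh^\trans, \Phi'(t)\rangle$, equals $G(s) - G(s')$ where $G(t) = \Sum(\bxi(\Phi(t))) + \langle \vh\vh^\trans, \Phi(t)\rangle$. For the non-local piece $\int_0^{\hat t}\langle \hat\Phi_\epsilon(t)^{-1}, \Phi'(t)\rangle\, dt$, writing $\hat\Phi_\epsilon(t) = \hat\Phi(t) + \epsilon \tilde H(t)$, I would expand to first order and use the identity
\[
\langle \hat\Phi(t)^{-1} M \hat\Phi(t)^{-1}, \Phi'(t)\rangle = \frac{1}{x(t)}\frac{d}{dt}\langle \hat\Phi(t)^{-1}, M\rangle \quad \text{for constant } M,
\]
together with $\tfrac{d}{dt}\log|\hat\Phi(t)| = -x(t)\langle \hat\Phi(t)^{-1}, \Phi'(t)\rangle$ and Stieltjes integration by parts on $\supp(\mu)$. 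After cancellation and boundary evaluation (using $\hat\Phi(\hat t) = \bQ - \Phi(\hat t)$ and $x(\hat t) = 1$), the total first variation should collapse to a positive multiple of $F(s) - F(s')$, up to smearing errors that vanish as the neighborhoods shrink. By our choice of $s, s'$ this is strictly negative, contradicting the minimality of $(x,\Phi)$. Hence $\mu(B) = 0$, which is the claim.

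The main obstacle is carrying out the Stieltjes integration by parts for the non-local term cleanly: the weight $1/x(t)$ is singular at the left endpoint of $\supp(\mu)$ and has jumps at atoms of $\mu$, so one must decompose $\mu$ into atomic and continuous parts and track the boundary contributions at $t = 0$ and $t = \hat t$ carefully. Showing that the resulting algebra assembles precisely the constant $\langle \vh\vh^\trans + \bxi'(\bQ),\bQ\rangle + 1 - \log|\bQ|$ appearing in $F$ (rather than some nearby but different constant) is the main technical work; this is analogous to, though delicate than, the reductions carried out in Sections~\ref{sec:lwbd} and~\ref{sec:upbd}.
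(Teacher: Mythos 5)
There is a genuine gap at the heart of your computation: you assert that, after the algebra, the first variation under the mass swap ``collapses to a positive multiple of $F(s)-F(s')$'', where $F$ is the explicit quantity in the lemma. It does not, and no amount of careful Stieltjes bookkeeping will make it do so. Carrying out the differentiation (the paper does this by differentiating $\hat\Phi(t)^{-1}$ in the direction of the perturbation and then applying Fubini --- no integration by parts against the singular weight $1/x(t)$ is needed, so the obstacle you flag is self-inflicted), the exact first variation in an admissible direction is governed by $f(s)=\int_0^s\langle\Psi(t),\Phi'(t)\rangle\,dt$ with
\[
\Psi(t)=\vh\vh^\trans+\bxi'(\Phi(t))-\int_0^t\hat\Phi(u)^{-1}\Phi'(u)\hat\Phi(u)^{-1}\,du,
\]
which depends on the unknown minimizer through $\bxi'(\Phi(t))$ and $\hat\Phi(u)^{-1}$; the explicit constants $\langle\vh\vh^\trans+\bxi'(\bQ),\bQ\rangle$ and $\log|\bQ|$ appearing in $F$ simply are not present in this expression, so they cannot emerge from the cancellation exactly. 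Minimality only yields that $\mu$ is carried by maximizers of $f$ on $[0,\hat T]$, hence (since $f(0)=0$, e.g.\ by perturbing toward the c.d.f.\ concentrated at $0$) that $f\geq 0$ on $\supp\mu$. The lemma then requires a second, separate step that your proposal never invokes: dominate $f$ from above by the explicit expression, using the monotonicity $\hat\Phi(t)\leq\bQ-\Phi(t)$ (so that the subtracted term only gets smaller when $\hat\Phi(t)^{-1}$ is replaced by $(\bQ-\Phi(t))^{-1}$) together with $\langle\bxi'(\Phi(t)),\Phi'(t)\rangle\leq\langle\bxi'(\bQ),\Phi'(t)\rangle$; integrating $\langle(\bQ-\Phi(t))^{-1},\Phi'(t)\rangle$ is what produces the terms $\log|\bQ-\Phi(s)|-\log|\bQ|$ and the $\bQ$-dependent constant. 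Without this comparison, knowing $F(s)<0$ says nothing about the sign of the true variation $f(s)-f(s')$, and your contradiction never materializes.

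Two smaller points. The paper's perturbation is the convex combination $(1-\epsilon)x+\epsilon y$ for an arbitrary admissible c.d.f.\ $y$ with $y(t)=1$ for $t\geq\hat T$, which is automatically a c.d.f.\ and needs only a one-sided derivative; your signed perturbation $\mu+\epsilon(\nu_{s'}-\nu_s)$ forces the smearing and positivity care you describe while buying nothing extra. And the claim that $B$ is an interval $(t^*,\hat T]$ via strict monotonicity of $F$ is unnecessary: the conclusion is a $\mu$-null statement about the sublevel set of $F$, and the argument via maximizers of $f$ never uses any structure of that set.
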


\begin{proof} Let $(x,\Phi)$ be the minimizer of $\sC(x,\Phi)$ on $A_{T,L}$. The proof involves examining the critical point condition of $\sC(x,\Phi)$ by perturbing the c.d.f. We define 
	\[
	\hat T = \sup \{ t \leq T \mmm \| (\bQ - \Phi(t) )^{-1} \|_\infty \leq L \},
	\]
	to be the largest feasible point in the support the measure $\mu$ corresponding to $x$. If $y(t)$ is another c.d.f. such that $y(t) =1$ for $t \geq \hat T$, then $(1 - \epsilon)x(t) + \epsilon y(t) =  x(t) + \epsilon ( y(t) - x(t) )$ satisfies the condition $(1 - \epsilon)x(t) + \epsilon y(t) = 1$ for $t \geq T$, so
	\[
	((1 - \epsilon)x(t) +  \epsilon y(t), \Phi) \in A_{T,L} \quad \text{for all} \quad \epsilon \in [0,1].
	\]
	
	In particular, if we define $z(t) =  y(t) - x(t)$, then the right derivative
	\[
	\frac{d}{d\epsilon}  \sC( x(t) + \epsilon  z(t) ) \Big|_{\epsilon = 0} \geq 0
	\]
	since a perturbation of the minimizer in an admissible direction must be non-negative. Taking the directional derivative and using the independence of $\sC$ on $t_x$ explained in \eqref{eq:notdependsup}, we see that
	\begin{align*}
	&\frac{d}{d\epsilon} \sC( x(t) + \epsilon  z(t) ) \Big|_{\epsilon = 0} 
	\\&= \int_0^n z(t) \langle \vec{h}\vec{h}^\trans + \bxi'(\Phi(t)), \Phi'(t) \rangle \, dt - \int_0^{\hat T} \bigg\langle \hat \Phi(t)^{-1} \bigg( \int_t^n z(s) \Phi'(s) \, ds \bigg) \hat \Phi(t)^{-1}, \Phi'(t) \bigg\rangle \, dt.
	\end{align*}
	Since $z(t) = 0$ for $t \geq \hat T$, the second integral can be simplified using Fubini's theorem,
	\begin{alignat*}{2}
	&\int_0^{\hat T} \bigg\langle \hat \Phi(t)^{-1} \bigg( \int_t^n z(s) \Phi'(s) \, ds \bigg) \hat \Phi(t)^{-1}, \Phi'(t) \bigg\rangle \, dt 
	%\\&= \int_0^n\int_t^n z(s) \langle \hat \Phi(t)^{-1}  \Phi'(s)  \hat \Phi(t)^{-1}, \Phi'(t) \rangle \, ds dt
	\\&= \int_0^{\hat T}\int_t^{\hat T} z(s) \langle \hat \Phi(t)^{-1} \Phi'(t) \hat \Phi(t)^{-1},  \Phi'(s) \rangle \, ds dt  &&\qquad \color{gray} \tr(\bA \bB \bC) = \tr(\bC\bA\bB)
	\\&= \int_0^{\hat T}\int_0^s z(s) \langle \hat \Phi(t)^{-1} \Phi'(t) \hat \Phi(t)^{-1},  \Phi'(s) \rangle \, dt ds  &&\qquad \color{gray} 0 \leq t \leq s \leq \hat T
	\\&= \int_0^n\int_0^t z(t) \langle \hat \Phi(s)^{-1} \Phi'(s) \hat \Phi(s)^{-1},  \Phi'(t) \rangle \, ds dt.  &&\qquad \color{gray} \text{relabel $s$ and $t$}
	\end{alignat*}
	If we define the matrix,
	\[
	\Psi(t) =  \vec{h} \vec{h}^\trans + \bxi'( \Phi(t) ) - \int_0^t \hat \Phi(s)^{-1} \Phi'(s) \hat \Phi(s)^{-1} \, ds
	\]
	then our computations above implies that
	\[
	\frac{d}{d\epsilon} \sC( x(t) + \epsilon  z(t) ) \Big|_{\epsilon = 0}  = \int_0^n z(t) \langle \Psi(t),  \Phi'(t) \rangle \, dt.
	\]
	Since $z(t) = y(t) - x(t)$, the critical point condition $\frac{d}{d\epsilon}  \sC( x(t) + \epsilon  z(t) ) \big|_{\epsilon = 0} \geq 0$ implies that
	\[
	\int_0^n y(t) \langle \Phi(t),  \Phi'(t) \rangle \, dt \geq  \int_0^n x(t) \langle \Phi(t),  \Phi'(t) \rangle \, dt
	\]
	for all functions $y(t)$. From this critical point condition, we are able to recover the support of $\mu(t)$, the measure corresponding to $x(t)$. In particular, if we define
	\[
	y(t) = \nu ([0,t] ) = \int_0^t d \nu(s),  
	\]
	then Fubini's theorem implies that
	\[
	\int_0^n y(t) \langle \Psi(t),  \Phi'(t) \rangle \, dt =  \int_0^n \int_0^t \langle \Psi(t),  \Phi'(t) \rangle \, d \nu(s) dt =  \int_0^n \int_s^n \langle \Psi(t),  \Phi'(t) \rangle \, dt d \nu(s)  .
	\]
	The critical point condition implies that
	\begin{equation}\label{eq:critptcdf}
	\int_0^n \int_s^n \langle \Psi(t),  \Phi'(t) \rangle \, dt d \nu(s)  \geq \int_0^n \int_s^n \langle \Psi(t),  \Phi'(t) \rangle \, dt d \mu(s)  .
	\end{equation}
	Since $\int_0^n  d \nu(s) = 1$ and $\int_0^n  d \mu(s) = 1$, 
	\[
	\int_0^n \int_0^n \langle \Psi(t),  \Phi'(t) \rangle \, dt d \nu(s)  = \int_0^n \int_0^n \langle \Psi(t),  \Phi'(t) \rangle \, dt d \mu(s)  ,
	\]
	we can subtract \eqref{eq:critptcdf} to conclude
	\[
	\int_0^n \int_0^s \langle \Psi(t),  \Phi'(t) \rangle \, dt d \nu(s)  \leq \int_0^n \int_0^s \langle \Psi(t),  \Phi'(t) \rangle \, dt d \mu(s)
	\]
	for all measures $\nu$ such that $\nu([0,t]) = 1$ for all $t \geq \hat T$. In particular, $\mu$ must be supported on points less than or equal to $\hat T$ that maximize the function
	\[
	f(s) := \int_0^s \langle \Psi(t),  \Phi'(t) \rangle \, dt .
	\]
	
	Since $f(0) = 0$, this means that the support of $\mu$ cannot contain points such that $f(s) < 0$. By monotonicity [\ref{app:properties}, Proposition~\ref{prop:monotone}], $\hat \Phi(t) \leq \bQ - \Phi(t)$ and therefore,
	\begin{align*}
	\int_0^s \langle \Psi(t), \Phi'(t) \rangle \, dt &= \int_0^s \langle \vh \vh^\trans + \bxi'(\bQ), \Phi'(t) \rangle \, dt - \int_0^s \int_0^t \langle \hat  \Phi(s)^{-1} \Phi'(s) \hat \Phi(s)^{-1}, \Phi'(t) \rangle \, ds  \, dt
	\\&\leq \int_0^s \langle \vh \vh^\trans + \bxi'(\bQ), \Phi'(t) \rangle \, dt - \int_0^s \int_0^t \langle (\bQ - \Phi(s))^{-1} \Phi'(s) (\bQ - \Phi(s))^{-1}, \Phi'(t) \rangle \, ds  \, dt
	\\&= \int_0^s \langle \vh \vh^\trans + \bxi'(\bQ), \Phi'(t) \rangle \, dt - \int_0^s \langle (\bQ - \Phi(t))^{-1}, \Phi'(t) \rangle  \, dt + \int_0^s \langle \bQ^{-1}, \Phi'(t) \rangle  \, dt
	\\&= \langle \vh \vh^\trans + \bxi'(\bQ) + \bQ^{-1}, \Phi(s) \rangle - \log |\bQ| + \log| \bQ - \Phi(s)|
	\\&\leq  \langle \vh \vh^\trans + \bxi'(\bQ) + \bQ^{-1}, \bQ \rangle - \log |\bQ| + \log| \bQ - \Phi(s)|.
	\end{align*}
	In particular, we have
	\[
	\mu\big( \{ s \leq \hat T \mmm \langle \vh \vh^\trans + \bxi'(\bQ) + \bQ^{-1}, \bQ \rangle - \log |\bQ| + \log| \bQ - \Phi(s)| < 0 \} \big) = 0.
	\]
\end{proof}

Lemma~\ref{lem:supportmin} implies that given $\Phi$, the support of $\mu$ cannot take values close to $n$, since $|\bQ - \Phi(t)| \to 0$ as $t \to n$. Proposition~\ref{prop:minimumcompact} follows immediately. 
\begin{proof}[Proof of Proposition~\ref{prop:minimumcompact}]
	
	If $T$ is the largest point in the support of $\mu$, then Lemma~\ref{lem:supportmin} implies
	\[
	\langle \vh \vh^\trans + \bxi'(\bQ), \bQ \rangle + 1 - \log| \bQ|  + \log | \bQ - \Phi(T)| \geq 0 \implies \lambda_{min} (\bQ - \Phi(T) ) \geq e^{-(\langle \vh \vh^\trans + \bxi'(\bQ), \bQ \rangle + n - \log| \bQ| ) },
	\]
	because $| \bQ - \Phi(T)| \leq \lambda_{min} ( \bQ - \Phi(T) )$ since the eigenvalues of $( \bQ - \Phi(T))$ are less than $1$.
	Since $\lambda_{min} (\bQ - \Phi(T) ) = (\lambda_{max} ( (\bQ - \Phi(T))^{-1}) )^{-1}$ and the infinity norm and operator norm on symmetric real valued square matrices are equivalent, i.e. $\| \cdot \|_\infty \leq \sqrt{n} \| \cdot \|_2$, we have that 
	\[
	\| (\bQ - \Phi(T) )^{-1}\|_\infty \leq \sqrt{n} \lambda_{max} ( (\bQ - \Phi(T))^{-1} ) \leq \sqrt{n} e^{(\langle \vh \vh^\trans + \bxi'(\bQ), \bQ \rangle + n - \log| \bQ| ) } =: \hat L.
	\]
	The universal upper bound $\hat L$ only depends on the fixed parameters of the model.
	
	Furthermore, since the matrix paths are parametrized by the trace, we have 
	\[
	\lambda_{min}( \bQ - \Phi(T) ) \leq \tr( \bQ - \Phi(T) ) = n  - T \implies \lambda_{max} ( ( \bQ - \Phi(T) )^{-1} ) \geq \frac{1}{n - T},
	\]
	so we must have
	\[
	T \leq n - \frac{1}{\sqrt{n}} e^{-(\langle \vh \vh^\trans + \bxi'(\bQ), \bQ \rangle + n - \log| \bQ| ) } < n.
	\]
	If we define $\hat T = n - C^{-1} e^{-(\langle \vh \vh^\trans + \bxi'(\bQ), \bQ \rangle + n - \log| \bQ| ) }$ then we have just shown that the largest point $T$ in the support of any minimizer of $\sC$ must satisfy
	\[
	T \leq n - \frac{1}{\sqrt{n}} e^{-(\langle \vh \vh^\trans + \bxi'(\bQ), \bQ \rangle + n - \log| \bQ| ) } =: \hat T < n.
	\]
	This gives the explicit formulas for the constants \eqref{eq:cosntantsbound} in Proposition~\ref{prop:minimumcompact}.
\end{proof}

\bibliographystyle{abbrv}

\newpage

\begin{appendices}
	
	\section{Appendix --- Elementary Facts About Symmetric Matrices}\label{app:properties}
	
	In this section, we state several facts about matrices that was used in the proof of the Crisanti--Sommers formula. Let $\bS_{+}^n$ be the space of symmetric positive semidefinite real valued $n \times n$ matrices.

	\subsection{Matrix Directional Derivatives}\label{app:matrixderivs} Let $\bC$ be an arbitrary symmetric matrix we use the following notation to denote the matrix derivatives of various functions in the direction $\bC$. Let $f : \bS_{+}^n \to \R$, we define
	\[
	\frac{d}{d \bA} f(\bA) := \frac{d}{dt} f(\bA + t \bC) \Big|_{t = 0} \in \R.
	\]
	
	We summarize several matrix derivatives that are used to compute the partial derivative of the functional in this paper. Let $\langle \bA, \bB \rangle = \tr(\bA \bB)$ denote the Frobenius inner product. We have
	\begin{enumerate}
		\item 
		\begin{equation}%\tag{A1}
		\frac{\partial}{\partial \bA} \langle \bB, \bA \rangle = \frac{d}{dt} \tr(\bB (\bA + t \bC) ) \Big|_{t = 0} = \langle \bB, \bC \rangle .
		\end{equation}
		\item
		\begin{equation}%\tag{A2}
		\frac{\partial}{\partial \bA} \Sum( \bxi( \bA ) ) = \frac{d}{dt} \Sum( \bxi(\bA + t \bC) ) \Big|_{t = 0} = \langle \bxi'( \bA), \bC\rangle.
		\end{equation}
		\item 
		\begin{equation}%\tag{A3}
		\frac{\partial}{\partial \bA} \log | \bA | = \frac{d}{dt} \log | \bA + t\bC | \Big|_{t = 0} = \langle \bA^{-1} , \bC\rangle .
		\end{equation}
		\item 
		\begin{equation}%\tag{A4}
		\frac{\partial}{\partial \bA} \langle \bA^{-1}, \bB \rangle = \frac{d}{dt} \tr(\bB (\bA + t \bC)^{-1} ) \Big|_{t = 0} = -\langle \bA^{-1} \bB \bA^{-1} , \bC\rangle.
		\end{equation}
	\end{enumerate}
	
	Before proving these derivatives, we write several basic operations in terms of the Frobenius inner product for symmetric matrices,
	
	\begin{enumerate}
		\item Let $\bA$ be a $n \times n$ matrix, and let $\vh \in \R^n$, we have
		\begin{equation}
		( \vh , \bA \vh ) = \vh^\trans \bA \vh = \tr( \vh \vh^\trans \bA ) = \langle \vh \vh^\trans, \bA \rangle.
		\end{equation}
		\item Let $\bA$, $\bB$ and $\bC$ be $n \times n$ matrices, we have
		\begin{equation}
		\langle \bA \odot \bB , \bC \rangle = \tr( (\bA \odot \bB) \times \bC ) = \tr( \bA \times (\bB \odot \bC) ) = \langle \bA,\bB  \odot \bC \rangle.
		\end{equation}
		\item Let $\bm{1}$ be the $n \times n$ matrix with all $1$'s, as a consequence of the above fact, we have
		\begin{equation}
		\Sum( \bA \odot \bB ) = \tr( \bm{1} \times (\bA \odot \bB) ) = \tr( (\bm{1} \odot \bA) \times \bB) ) = \tr( \bA \bB) = \langle \bA, \bB \rangle.
		\end{equation}
	\end{enumerate}
	
	We now compute the directional derivatives.
	
	\begin{proposition}[Derivative of the Trace]\label{prop:partialtrace}
		For any matrix $\bB$, the directional derivative of the trace in direction $\bC$ is given by
		\[
		\frac{\partial}{\partial \bA} \tr(\bB \bA) = \frac{d}{dt} \tr(\bB \times (\bA + t \bC) ) \Big|_{t = 0} = \tr(\bB\bC) .
		\]
	\end{proposition}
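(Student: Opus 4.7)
The plan is to carry out the differentiation directly from the definition, using only two elementary facts: distributivity of matrix multiplication over addition, and linearity of the trace. First I would expand
\[
\bB(\bA + t\bC) = \bB\bA + t\,\bB\bC
\]
by distributivity. Next I would apply linearity of the trace to obtain
\[
\tr\bigl(\bB(\bA + t\bC)\bigr) = \tr(\bB\bA) + t\,\tr(\bB\bC).
\]
Since the right-hand side is an affine polynomial in the scalar $t$, its derivative at $t = 0$ is simply the coefficient of $t$, namely $\tr(\bB\bC)$, which gives the stated formula.

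There is no genuine obstacle here; the argument is two lines and relies on nothing beyond the linear structure of matrices and of the trace. In particular, the proof does not use symmetry of $\bA$, $\bB$, or $\bC$, so the identity is valid for arbitrary $n \times n$ real matrices, and the choice of a symmetric direction $\bC$ in the applications throughout the paper is a matter of convention (dictated by the domain $\bS_+^n$ on which the functionals are defined) rather than a hypothesis needed for this particular derivative.

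The main role of this proposition in what follows is to serve as the base case for the other matrix derivative formulas stated in the appendix. For instance, the derivative of $\Sum(\bxi(\bA))$ is obtained by combining the present formula with the power-series expansion of $\bxi$ and the Hadamard-product identity $\Sum(\bA\odot\bB) = \langle \bA,\bB\rangle$; the derivatives of $\log|\bA|$ and of $\langle \bA^{-1},\bB\rangle$ are obtained by combining the present formula with the classical identities $d\log|\bA| = \tr(\bA^{-1}\,d\bA)$ and $d(\bA^{-1}) = -\bA^{-1}\,(d\bA)\,\bA^{-1}$. For this reason I would simply record the one-line calculation and move on, without further commentary.
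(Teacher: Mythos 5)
Your proof is correct and is essentially the same as the paper's: both expand $\tr(\bB(\bA+t\bC))$ by linearity into $\tr(\bB\bA) + t\,\tr(\bB\bC)$ and read off the coefficient of $t$ as the derivative at $t=0$. Nothing further is needed.
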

	\begin{proof}
		By linearity, we have
		\[
		\frac{d}{dt} \tr(\bB \times (\bA + t \bC) ) \Big|_{t = 0}  = \frac{d}{dt} \tr(\bB \bA)  + t \tr ( \bB \bC) \Big|_{t = 0} =\tr ( \bB \bC).  
		\]
	\end{proof}
	This immediately implies the directional derivative of a quadratic form.
	\begin{proposition}[Derivative of quadratic form]
		For $\vh \in \R^n$, the directional derivative of the quadratic form in direction $\bC$ is given by
		\[
		\frac{\partial}{\partial \bA} (\vh, \bA \vh)  = \frac{d}{dt} (\vh, (\bA + t\bC) \vh) \Big|_{t = 0} = \tr( \vh \vh^\trans \bC) .
		\]
	\end{proposition}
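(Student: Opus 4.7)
The statement to prove is that for fixed $\vh \in \R^n$, $\frac{d}{dt}(\vh, (\bA + t\bC)\vh)\big|_{t=0} = \tr(\vh \vh^\trans \bC)$. The plan is to reduce this immediately to the preceding Proposition~\ref{prop:partialtrace} (derivative of the trace) by rewriting the quadratic form as a trace, so no further calculus is needed.

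First I would invoke the identity $(\vh, \bA \vh) = \vh^\trans \bA \vh = \tr(\vh^\trans \bA \vh)$, which holds because the argument is a scalar. Cyclicity of the trace then gives $\tr(\vh^\trans \bA \vh) = \tr(\vh \vh^\trans \bA) = \langle \vh \vh^\trans, \bA \rangle$, exactly the first bullet in the list of basic Frobenius-inner-product identities recorded just above the statement. In particular, the same rewriting applied to $\bA + t\bC$ yields $(\vh, (\bA + t\bC)\vh) = \langle \vh \vh^\trans, \bA + t\bC\rangle$.

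Next I would apply Proposition~\ref{prop:partialtrace} with $\bB = \vh \vh^\trans$ to conclude
\[
\frac{d}{dt}\, \langle \vh \vh^\trans, \bA + t \bC\rangle \Big|_{t=0} = \tr(\vh \vh^\trans \bC),
\]
which is precisely the claimed formula. Alternatively, one could bypass Proposition~\ref{prop:partialtrace} entirely and expand by bilinearity of the quadratic form: $(\vh, (\bA + t\bC)\vh) = (\vh, \bA\vh) + t (\vh, \bC\vh)$, differentiate in $t$, and then rewrite $(\vh, \bC\vh) = \tr(\vh\vh^\trans \bC)$ by the same cyclicity argument.

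There is no real obstacle here; the only thing to be careful about is the conversion between the quadratic form and the trace inner product, which is immediate from cyclicity and the fact that a scalar equals its own trace. No assumption of symmetry of $\bC$ or positive semidefiniteness of $\bA$ is used, so the proof applies to the full directional-derivative context used elsewhere in the appendix.
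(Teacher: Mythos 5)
Your proposal is correct and follows essentially the same route as the paper: rewrite $(\vh,\bA\vh)=\tr(\vh\vh^\trans\bA)$ and apply Proposition~\ref{prop:partialtrace} with $\bB=\vh\vh^\trans$. The extra bilinearity argument is a fine alternative but adds nothing beyond the paper's one-line reduction.
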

	\begin{proof}
		We can write the quadratic form as
		\[
		(\vh, \bA \vh) = \tr(\vh \vh^\trans \bA).
		\]
		The property follows immediately from Proposition~\ref{prop:partialtrace}.
	\end{proof}
	
	Next, we compute the matrix derivative with respect to a smooth function $\bm{G}(\bA)$, where the each coordinate of $\bm{G}_{ij}(\bA)$ is a smooth function single variable function of $\bA_{ij}$.
	
	\begin{proposition}[Derivatives of Matrix Valued Functions]\label{prop:derivtheta}
		Suppose each coordinate of $G(\bA)$ only depends on the corresponding coordinate of $\bA$. If $\bA$ is positive semidefinite, then
		\[
		\frac{\partial}{\partial \bA} \Sum( \bm{G}( \bA ) ) = \frac{d}{dt} \Sum( \bm{G}(\bA + t \bC) ) \Big|_{t = 0} = \tr( \bm{G}'( \bA) \bC) .
		\]
	\end{proposition}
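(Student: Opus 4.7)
The plan is to reduce to an elementary one-variable chain rule applied coordinatewise, and then convert the resulting Hadamard-type expression into the trace form by invoking the identities already collected earlier in this appendix.

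First I would unpack the hypothesis: since each entry $G_{ij}(\bA)$ depends only on the entry $A_{ij}$, there exist smooth scalar functions $g_{ij}$ with $G_{ij}(\bA) = g_{ij}(A_{ij})$, and by convention $\bm{G}'(\bA)$ denotes the matrix whose $(i,j)$-entry is $g_{ij}'(A_{ij})$. With this notation,
\[
\Sum(\bm{G}(\bA + t\bC)) = \sum_{i,j} g_{ij}(A_{ij} + t C_{ij})
\]
is a finite sum of smooth scalar functions of $t$, so termwise differentiation together with the ordinary one-variable chain rule yields
\[
\frac{d}{dt}\Sum(\bm{G}(\bA + t\bC))\Big|_{t=0} = \sum_{i,j} g_{ij}'(A_{ij})\, C_{ij} = \Sum(\bm{G}'(\bA) \odot \bC).
\]

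To finish, I would invoke the identity $\Sum(\bm{A}_1 \odot \bm{A}_2) = \tr(\bm{A}_1 \bm{A}_2)$ recorded earlier in this appendix, which is valid whenever the second factor is symmetric. Since the admissible variations $\bC$ are taken symmetric throughout the paper, the right side of the previous display rewrites as $\tr(\bm{G}'(\bA)\bC)$, giving the proposition. The positive semidefiniteness of $\bA$ plays no role in the calculation itself; it enters only through the standing convention that $\bA$ lies in the domain of definition, ensuring $\bA + t\bC$ remains admissible for $t$ near $0$ and each $g_{ij}$ is evaluated in the interior of its domain. No substantive obstacle arises here: the content is simply the directional derivative of a coordinate-separable function, and the only bookkeeping is the passage between the Hadamard-based expression $\Sum(\bm{G}'(\bA) \odot \bC)$ and the trace expression $\tr(\bm{G}'(\bA)\bC)$, which is immediate from the symmetry of $\bC$.
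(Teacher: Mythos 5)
Your proof is correct and follows essentially the same route as the paper: coordinatewise differentiation of the separable function followed by the identity $\Sum(\bA \odot \bB) = \tr(\bA\bB)$ (the paper merely phrases the first step as differentiating $\tr(\bm{1}\times \bm{G}(\bA+t\bC))$ term by term). Your remarks that the symmetry of $\bC$ justifies the Hadamard-to-trace conversion and that positive semidefiniteness of $\bA$ is not actually used are accurate.
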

	
	\begin{proof}
		We first write 
		\[
		\Sum( \bm{G}( \bA ) ) = \tr( \bm{1} \times \bm{G}(\bA) ).
		\]
		Therefore, by Proposition~\ref{prop:partialtrace},
		\[
		\frac{d}{dt} \tr(\bm{1} \times \bm{G}(\bA + t\bC) ) \Big|_{t = 0} = \tr(\bm{1} \times \frac{d}{dt} \bm{G}(\bA + t\bC) ) \Big|_{t = 0}  = \tr(\bm{1} \times \bm{G}'(\bA) \odot \bC ) = \tr(\bm{G}'(\bA) \bC ).
		\]
		We used the fact that each coordinate of $\bm{G}(\bA)$ only depends on the corresponding coordinate of $\bA$, so the matrices can be differentiated term by term.
	\end{proof}
	
	We now compute the derivative of the log determinant.
	\begin{proposition}[Derivative of Log Determinant]\label{prop:derivlogdet}
		For any matrix $\bA > 0$, the directional derivative of the log determinant in direction $\bC$ is given by
		\[
		\frac{\partial}{\partial \bA} \log | \bA | = \frac{d}{dt} \log | \bA + t\bC | \Big|_{t = 0} = \tr(\bA^{-1} \bC) .
		\]
	\end{proposition}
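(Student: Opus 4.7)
The identity is Jacobi's formula, and the cleanest approach is to factor out $\bA$ and then expand the determinant to first order in $t$. Since $\bA > 0$, the matrix $\bA$ is invertible, so for all sufficiently small $t$ we can write
\[
\bA + t\bC \;=\; \bA\bigl(\bI + t\,\bA^{-1}\bC\bigr),
\]
and multiplicativity of the determinant together with the positivity of $|\bI + t\bA^{-1}\bC|$ for small $t$ yields
\[
\log|\bA + t\bC| \;=\; \log|\bA| + \log\bigl|\bI + t\,\bA^{-1}\bC\bigr|.
\]
Differentiating at $t=0$ therefore reduces the claim to
\[
\frac{d}{dt}\log\bigl|\bI + tM\bigr|\Big|_{t=0} \;=\; \tr(M),\qquad M := \bA^{-1}\bC.
\]

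\textbf{Key step.} To establish this last identity, I would expand $|\bI + tM|$ to first order in $t$. Using the Leibniz formula
\[
|\bI + tM| \;=\; \sum_{\sigma \in S_n} \mathrm{sgn}(\sigma)\prod_{i=1}^n (\delta_{i,\sigma(i)} + t M_{i,\sigma(i)}),
\]
only the identity permutation contributes a constant term, and contributions of order $t$ come from permutations $\sigma$ that differ from the identity in at most one index, i.e.\ from $\sigma = \mathrm{id}$ itself (by picking one factor $tM_{i,i}$ and leaving the other $n-1$ factors as $\delta_{j,j}=1$). This gives
\[
|\bI + tM| \;=\; 1 + t\sum_{i=1}^n M_{i,i} + O(t^2) \;=\; 1 + t\,\tr(M) + O(t^2).
\]
Taking the logarithm and differentiating at $t=0$ produces $\tr(M) = \tr(\bA^{-1}\bC)$, which is exactly what is needed. (An equivalent derivation is to note that $|\bI + tM| = \prod_i (1 + t\lambda_i)$, where $\lambda_i$ are the eigenvalues of $M$, so $\log|\bI + tM| = \sum_i \log(1+t\lambda_i) = t\sum_i \lambda_i + O(t^2) = t\,\tr(M) + O(t^2)$.)

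\textbf{Obstacle.} There is no substantive obstacle here: the only thing to verify is that $t$ is small enough that $\bI + tM$ is invertible (so that the logarithm is defined and smooth at that point), which follows from the continuity of eigenvalues since at $t=0$ the matrix is the identity. Combining the factorization with the first-order determinant expansion then yields the claim directly.
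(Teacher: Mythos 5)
Your proof is correct, but it takes a different route from the paper. The paper argues via the chain rule together with Jacobi's formula, quoted as a known fact: $\frac{d}{dt}|\bA + t\bC| = |\bA + t\bC|\,\tr\bigl((\bA + t\bC)^{-1}\bC\bigr)$, and then divides by $|\bA+t\bC|$. You instead avoid citing Jacobi's formula by factoring $\bA + t\bC = \bA(\bI + t\bA^{-1}\bC)$, using multiplicativity of the determinant to split off $\log|\bA|$, and computing the first-order expansion $|\bI + tM| = 1 + t\,\tr(M) + O(t^2)$ directly from the Leibniz (or eigenvalue-product) formula; your observation that only the identity permutation can contribute at orders $t^0$ and $t^1$ is right, since a non-identity permutation moves at least two indices and hence picks up at least two factors of $t$. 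The paper's argument is shorter but leans on Jacobi's formula as a black box; yours is self-contained and, as a byproduct, essentially reproves the special case of Jacobi's formula that is needed. Both verify the only analytic point at stake, namely invertibility of $\bA + t\bC$ (equivalently of $\bI + t\bA^{-1}\bC$) for small $t$, so there is no gap.
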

	\begin{proof} %https://math.stackexchange.com/questions/1151569/how-to-calculate-the-derivative-of-log-det-matrix
		By the chain rule, we have
		\[
		\frac{d}{dt} \log | \bA + t\bC | \Big|_{t = 0} =   \frac{1}{| \bA + t\bC |} \frac{d}{dt} | \bA + t\bC | \Big|_{t = 0}.
		\]
		By Jacobi's formula for invertible matrices, we have
		\[
		\frac{d}{dt} | \bA + t\bC |  = \tr( | \bA + t\bC | (\bA + t\bC)^{-1} \bC) = | \bA + t\bC |  \tr( (\bA + t\bC)^{-1} \bC),
		\]
		and therefore
		\[
		\frac{d}{dt} \log | \bA + t\bC | \Big|_{t = 0} = \frac{| \bA + t\bC |}{| \bA + t\bC |} \tr(  (\bA + t\bC)^{-1} \bC) \Big|_{t = 0} = \tr(A^{-1} \bC).
		\]
	\end{proof}
	
	Lastly, we compute the matrix derivative of the inverse
	
	\begin{proposition}[Derivative of Inverse]\label{prop:partialinverse}
		For any matrix $\bB$ and $\bA > 0$, the directional derivative of the inverse of $\bA$ in direction $\bC$ is given by
		\[
		\frac{\partial}{\partial \bA} \tr(\bB \bA^{-1}) = \frac{d}{dt} \tr(\bB \times (\bA + t \bC)^{-1} ) \Big|_{t = 0} = -\tr(\bA^{-1} \bB \bA^{-1} \bC) = -\tr(\bA^{-1} \bC \bA^{-1} \bB) .
		\]
	\end{proposition}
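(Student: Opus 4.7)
The plan is to establish the formula for $\frac{d}{dt}(\bA + t\bC)^{-1}\big|_{t=0}$ first, and then combine it with linearity of the trace and the cyclic property $\tr(\bX\bY) = \tr(\bY\bX)$ to obtain all three equalities. Since $\bA > 0$ is invertible, $\bA + t\bC$ remains invertible for all sufficiently small $t$ (the set of invertible matrices is open), so the quantity $(\bA + t\bC)^{-1}$ is well-defined in a neighborhood of $t = 0$ and smooth in $t$.

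To compute the derivative of the inverse, I would differentiate the identity $(\bA + t\bC)(\bA + t\bC)^{-1} = \bI$ with respect to $t$ using the product rule. This gives
\[
\bC \cdot (\bA + t\bC)^{-1} + (\bA + t\bC) \cdot \frac{d}{dt}(\bA + t\bC)^{-1} = \bm{0}.
\]
Solving for the derivative and evaluating at $t = 0$ yields
\[
\frac{d}{dt}(\bA + t\bC)^{-1}\Big|_{t = 0} = -\bA^{-1} \bC \bA^{-1}.
\]

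With this identity in hand, I can interchange the derivative with the trace (since the trace is linear and continuous) to get
\[
\frac{d}{dt}\tr\bigl(\bB \cdot (\bA + t\bC)^{-1}\bigr)\Big|_{t = 0} = \tr\Bigl(\bB \cdot \frac{d}{dt}(\bA + t\bC)^{-1}\Big|_{t=0}\Bigr) = -\tr(\bB \bA^{-1} \bC \bA^{-1}).
\]
The two remaining equalities then follow immediately from the cyclic property of the trace: $\tr(\bB \bA^{-1} \bC \bA^{-1}) = \tr(\bA^{-1} \bB \bA^{-1} \bC) = \tr(\bA^{-1} \bC \bA^{-1} \bB)$.

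There is no real obstacle here; the only step requiring care is justifying differentiation of the inverse, which follows from the smoothness of matrix inversion on the open set of invertible matrices (alternatively, one could use the Neumann expansion $(\bA + t\bC)^{-1} = \bA^{-1} - t\bA^{-1}\bC\bA^{-1} + O(t^2)$ for small $t$, obtained by factoring $\bA + t\bC = \bA(\bI + t\bA^{-1}\bC)$ and expanding the second factor). Either route gives the claimed formula in a few lines.
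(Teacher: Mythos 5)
Your proposal is correct. The paper proves the same identity by working directly with the difference quotient of $\tr(\bB(\bA+t\bC)^{-1})$ and applying the resolvent identity $(\bA+t\bC)^{-1}-\bA^{-1}=(\bA+t\bC)^{-1}\big(\bA-(\bA+t\bC)\big)\bA^{-1}$ before passing to the limit, whereas you differentiate the identity $(\bA+t\bC)(\bA+t\bC)^{-1}=\bI$ (justifying differentiability via smoothness of inversion or the Neumann expansion) and then use linearity and cyclicity of the trace; these are essentially the same elementary computation, and your argument is complete.
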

	
	\begin{proof}%https://math.stackexchange.com/questions/1141599/derivative-of-trace-of-inverse-matrix
		By definition, we have
		\[
		\frac{d}{dt} \tr(\bB \times (\bA + t \bC)^{-1} ) \Big|_{t = 0} = \lim_{t \to 0} \frac{\tr(\bB (\bA + t \bC)^{-1} ) - \tr(\bB \bA^{-1})  }{t} = \lim_{t \to 0}\tr \bigg( \frac{\bB (\bA + t \bC)^{-1} - \bB \bA^{-1} }{t} \bigg)
		\]
		which simplifies to
		\[
		\lim_{t \to 0}\tr \bigg( \bB (\bA + t \bC)^{-1} \frac{ \bA -  (\bA + t \bC)}{t} \bA^{-1} \bigg) = -\tr ( \bB \bA^{-1} \bC \bA^{-1} ) = -\tr ( \bA^{-1} \bB \bA^{-1} \bC  ) .
		\]
		Since $\tr(\bA \bB \bC) = \tr( \bC \bA \bB)$, the derivative is also equal to $-\tr ( \bA^{-1} \bC\bA^{-1} \bB   ) $
	\end{proof}
	
	\begin{proposition}[Quadratic Form of Inverse]
		For any vectors $\vh \in \R^n$ and $\bA > 0$, the directional derivative of the trace in direction $\bC$ is given by
		\[
		\frac{\partial}{\partial \bA} (\vh, \bA^{-1} \vh) = \frac{d}{dt} (\vh, (\bA + t \bC)^{-1} \vh) \Big|_{t = 0} = \tr(\bA^{-1} \vh\vh^\trans \bA^{-1} \bC) .
		\]
	\end{proposition}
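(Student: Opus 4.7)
The plan is to reduce this statement immediately to Proposition~\ref{prop:partialinverse} by rewriting the scalar quadratic form as a trace of a product involving a rank-one matrix. Using the cyclic property of the trace,
\[
(\vh, \bA^{-1} \vh) \;=\; \vh^{\trans} \bA^{-1} \vh \;=\; \tr\bigl(\vh \vh^{\trans} \bA^{-1}\bigr).
\]
Replacing $\bA$ by $\bA + t\bC$ on both sides, this lets me re-express the one-parameter family as
\[
(\vh, (\bA + t\bC)^{-1} \vh) \;=\; \tr\bigl(\vh \vh^{\trans} (\bA + t\bC)^{-1}\bigr),
\]
which is exactly the form to which Proposition~\ref{prop:partialinverse} applies with the choice $\bB := \vh \vh^{\trans}$.

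Next I would invoke Proposition~\ref{prop:partialinverse} with this $\bB$ to read off the derivative in direction $\bC$. This yields $\tr(\bA^{-1} \vh \vh^{\trans} \bA^{-1} \bC)$ (up to the sign convention of the cited proposition), matching the stated right-hand side. The differentiation itself is legitimate because $\bA > 0$ implies that $\bA + t\bC$ remains positive definite, and in particular invertible, for all $t$ in a neighborhood of $0$ by continuity of eigenvalues; therefore $t \mapsto (\bA + t\bC)^{-1}$ is smooth near $0$ and the matrix-derivative formula of Proposition~\ref{prop:partialinverse} applies without any boundary issue.

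There is no real obstacle here: the statement is an immediate rank-one specialization of the preceding derivative-of-inverse proposition, and the only step worth spelling out carefully is the trace rewriting via cyclicity. The result is recorded separately because the rank-one form $\vh \vh^{\trans}$ occurs repeatedly in the Parisi and Crisanti--Sommers functionals, where the external field $\vh$ enters only through the quadratic combination $(\vh, \bA^{-1} \vh) = \langle \vh \vh^{\trans}, \bA^{-1}\rangle$.
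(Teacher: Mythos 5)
Your proof is correct and is essentially the paper's own argument: rewrite the quadratic form as $(\vh,\bA^{-1}\vh)=\tr(\vh\vh^\trans\bA^{-1})$ and apply Proposition~\ref{prop:partialinverse} with $\bB=\vh\vh^\trans$. Your parenthetical about the sign is apt --- that proposition gives $-\tr(\bA^{-1}\vh\vh^\trans\bA^{-1}\bC)$, so the minus sign is missing from the stated formula itself rather than from your argument.
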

	
	\begin{proof}
		We can write the quadratic form as
		\[
		(\vh, \bA \vh) = \tr(\vh \vh^\trans \bA).
		\]
		The property follows immediately from Proposition~\ref{prop:partialinverse}.
	\end{proof}
	
	\subsection{Critical Point Conditions} If $\bA$ is an interior minimizer of $f$, then
	\[
	\frac{d}{d t} f(\bA + t\bC) \Big|_{t = 0} = 0
	\]
	for all directions $\bC$. This is because both $\bC$ and $-\bC$ are admissible variations at an interior point.	The following results will be used to derive the matrix equalities in the critical point conditions.
	
	\begin{proposition}\label{prop:direcchar}
		If $\tr(\bA \bC) \leq \tr(\bB \bC)$ for all symmetric matrices $\bC$ then $\bA \leq \bB$. 
	\end{proposition}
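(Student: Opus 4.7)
The plan is to reduce the one-sided inequality to a two-sided equality by exploiting the fact that the class of symmetric matrices is closed under negation, and then use a Frobenius-norm positivity argument.

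First I would observe that if $\bC$ is symmetric then so is $-\bC$, so the hypothesis applied with $\bC$ and with $-\bC$ yields the two inequalities $\tr((\bB-\bA)\bC)\ge 0$ and $\tr((\bB-\bA)\bC)\le 0$. Hence the hypothesis upgrades to the equality
\[
\tr\bigl((\bB-\bA)\bC\bigr)=0\qquad\text{for every symmetric } \bC.
\]
Since $\bB-\bA$ is itself symmetric, setting $\bC:=\bB-\bA$ gives
\[
\tr\bigl((\bB-\bA)^2\bigr)=\|\bB-\bA\|_F^{\,2}=0,
\]
which forces $\bB-\bA=\bm 0$, so in particular $\bA\le\bB$ in the Loewner order. (Alternatively, one can test against the elementary symmetric matrices $\bm E_{ij}=\tfrac12(e_ie_j^{\trans}+e_je_i^{\trans})$ to conclude entrywise that $\bB-\bA=\bm 0$; both routes give the same conclusion.)

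There is no genuine obstacle here: the argument is a one-line consequence of the symmetry of the test class. The only subtlety worth flagging is that the stated conclusion $\bA\le\bB$ is in fact strictly weaker than what the proof delivers, namely $\bA=\bB$, but this stronger identity is precisely what is used in the body of the paper when passing from vanishing directional derivatives along all symmetric directions to the matrix critical-point equations such as \eqref{eq:lwbdrcond}, \eqref{eq:lwbdpcond}, \eqref{eq:lwbd1cond}, \eqref{eq:csqp}, and \eqref{eq:csqp2}.
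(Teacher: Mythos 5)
Your proof is correct for the statement as written, but it takes a genuinely different route from the paper. The paper argues by contradiction using a single rank-one positive semidefinite test matrix: if $\bB-\bA$ were not positive semidefinite, there would be $v$ with $v^\trans(\bB-\bA)v<0$, and plugging $\bC=vv^\trans$ into the hypothesis gives $0\leq\tr((\bB-\bA)vv^\trans)=v^\trans(\bB-\bA)v<0$, a contradiction. Note that this argument only invokes the hypothesis for $\bC\in\bS^n_+$ (indeed only for rank-one such $\bC$), so it proves the genuinely one-sided statement: testing against the positive semidefinite cone alone already characterizes the Loewner order, which is the form one would need if only a cone of variations were admissible. Your argument instead exploits the fact that the class of symmetric test matrices is a linear space closed under negation, which upgrades the inequality to $\tr((\bB-\bA)\bC)=0$ for all symmetric $\bC$ and then forces $\bB-\bA=\bm 0$ via the Frobenius norm (or the $\bm E_{ij}$ tests). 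This is valid and even makes Proposition~\ref{prop:symmetrictest} immediate without passing through the one-sided version, and your observation that the literal hypothesis actually forces $\bA=\bB$ is accurate — the body of the paper only ever uses the equality version, since both $\bC$ and $-\bC$ are admissible variations at the interior minimizers. The trade-off is that your argument collapses if the hypothesis is weakened to hold only for $\bC\in\bS_+^n$, whereas the paper's proof survives that weakening; so the paper's route is the one that captures the intended directional characterization of $\bA\leq\bB$, while yours is the shorter path to what is actually used.
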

	
	\begin{proof}
		Suppose that $\tr(\bA \bC) \leq \tr(\bB \bC)$ for all symmetric matrices $\bC$ but $\bB - \bA$ is not positive semidefinite. That is, there exists a vector $v$ such that
		\[
		v^\trans(\bB - \bA)v < 0.
		\]
		Consider the symmetric matrix $\bC = vv^\trans$. Therefore, our assumption implies,
		\[
		0 \leq \tr( (\bB - \bA) \bC) = \tr( (\bB - \bA) v v^\trans) = \tr(  v^T(\bB - \bA) v) < 0 
		\]
		which is a contradiction. Therefore, $\bB - \bA$ must be positive semidefinite.	%\color{gray} We essentially used the other definition of the Loewner partial order, $\bA \leq \bB \iff x^\trans \bA x \leq x^\trans \bB x$ for all $x$ \color{black}
	\end{proof}
	
	This implies the following two sided version of the result,
	
	\begin{proposition}\label{prop:symmetrictest}
		If $\tr(\bA \bC) = \tr(\bB \bC)$ for all symmetric matrices $\bC$, then $\bA = \bB$. 
	\end{proposition}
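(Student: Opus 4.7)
The plan is to deduce Proposition~\ref{prop:symmetrictest} directly from Proposition~\ref{prop:direcchar}, which is the harder (one-sided) version stated immediately before. The hypothesis $\tr(\bA\bC) = \tr(\bB\bC)$ for every symmetric $\bC$ trivially implies the two one-sided inequalities $\tr(\bA\bC) \leq \tr(\bB\bC)$ and $\tr(\bB\bC) \leq \tr(\bA\bC)$, each holding for all symmetric $\bC$. Applying Proposition~\ref{prop:direcchar} to each inequality yields $\bA \leq \bB$ and $\bB \leq \bA$, respectively, so both $\bB - \bA$ and $\bA - \bB$ are positive semidefinite. Combined, this forces $\bA = \bB$.

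A self-contained alternative, which does not invoke Proposition~\ref{prop:direcchar} and uses only symmetry, is to test the hypothesis against the specific choice $\bC = \bA - \bB$. Since $\bA$ and $\bB$ are symmetric in the intended applications (they arise from the Frobenius-inner-product directional derivatives computed in Appendix~\ref{app:matrixderivs}), $\bC$ is symmetric and admissible. The hypothesis then gives
\[
0 = \tr(\bA(\bA-\bB)) - \tr(\bB(\bA-\bB)) = \tr\bigl((\bA-\bB)^2\bigr) = \|\bA-\bB\|_F^2,
\]
and vanishing of the Frobenius norm yields $\bA = \bB$.

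There is no serious obstacle here; the only subtle point is that the test matrix $\bC$ in the hypothesis must be allowed to range over \emph{all} symmetric matrices (not only positive semidefinite ones), which is indeed the case in every use of this proposition in Sections~\ref{sec:lwbd} and~\ref{sec:upbd}, where the critical-point equations are derived by taking directional derivatives along arbitrary symmetric perturbations $\bC$.
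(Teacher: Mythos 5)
Your first argument is exactly the paper's proof: apply Proposition~\ref{prop:direcchar} in both directions to get $\bA \leq \bB$ and $\bB \leq \bA$, hence $\bA = \bB$, so the proposal is correct and takes essentially the same approach. The alternative via $\bC = \bA - \bB$ and the Frobenius norm is also valid (given the implicit symmetry of $\bA$ and $\bB$, which the paper's own argument likewise relies on) but is not needed.
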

	
	\begin{proof}
		If $\tr(\bA \bC) = \tr(\bB \bC)$, then by Proposition~\ref{prop:direcchar},
		\[
		\tr(\bA \bC) \leq \tr(\bB \bC) \implies \bA \leq \bB \qquad \text{ and } \qquad \tr(\bB \bC) \leq \tr(\bA \bC) \implies \bB \leq \bA.
		\]
		Therefore, $\bA - \bB$ is both positive semidefinite and negative semidefinite, so all of its eigenvalues are $0$. This implies that $\bA = \bB$.
	\end{proof}
	
	\subsection{Properties of Positive Semidefinite Matrices}
	
	\begin{proposition}[Log determinants are concave]\label{prop:logconcave}
		Let $\bA > 0$ and suppose $\bC$ is a symmetric matrix. The function
		\[
		f(x) = \log| \bA + x\bC |
		\]
		is concave in its domain. In particular, we have
		\[
		\log|\bA| + \tr(\bA^{-1} \bC) \geq \log| \bA + \bC|
		\]
		for all $\bC$ such that $ |\bA + \bC | > 0$. 
	\end{proposition}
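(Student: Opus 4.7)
The plan is to diagonalize the problem by a congruence transformation that absorbs $\bA$, reducing the statement to concavity of scalar logarithms, and then to recover the stated inequality from the standard first-order characterization of concavity together with the formula for $\partial/\partial\bA \log|\bA|$ computed in Proposition~\ref{prop:derivlogdet}.

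First I would factor $\bA = \bA^{1/2}\bA^{1/2}$ (using that $\bA > 0$) and write, on the open interval of $x$ where $\bA + x\bC > 0$,
\[
\log|\bA + x\bC| \;=\; \log|\bA| \;+\; \log\bigl|\bI + x\bM\bigr|, \qquad \bM \;:=\; \bA^{-1/2}\bC\bA^{-1/2}.
\]
The matrix $\bM$ is symmetric, so it has real eigenvalues $\lambda_1,\dots,\lambda_n$, and consequently
\[
\log|\bI + x\bM| \;=\; \sum_{i=1}^n \log(1 + x\lambda_i).
\]
Each summand is concave in $x$ on the interval where $1 + x\lambda_i > 0$, so $f(x) = \log|\bA + x\bC|$ is concave on the (open) set $\{x : \bA + x\bC > 0\}$, which is exactly the stated domain.

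Next I would apply the first-order characterization of concavity: for any concave differentiable $f$ one has $f(1) \leq f(0) + f'(0)$ whenever $[0,1]$ lies in the domain. Here $f(0) = \log|\bA|$, and by Proposition~\ref{prop:derivlogdet} we have $f'(0) = \tr(\bA^{-1}\bC)$. A small technical point is that the hypothesis only gives $|\bA + \bC| > 0$, not positive definiteness of $\bA + \bC$, so a priori the segment from $\bA$ to $\bA+\bC$ need not stay in the positive definite cone; I would handle this by noting that on the connected component of $\{x : |\bA + x\bC| \neq 0\}$ containing $x = 0$, the matrix $\bA + x\bC$ has the same signature as $\bA$ (signature is locally constant where the determinant is nonzero), hence remains positive definite, and therefore $x = 1$ lies in this component and the concavity inequality applies. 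Substituting $x = 1$ gives
\[
\log|\bA + \bC| \;\leq\; \log|\bA| \;+\; \tr(\bA^{-1}\bC),
\]
which is the required inequality. The only genuinely delicate step is the signature argument just described; the rest is routine diagonalization and a first-order concavity inequality.
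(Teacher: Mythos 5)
Your main argument is correct and takes a genuinely different route from the paper: the paper establishes concavity of $f(x)=\log|\bA+x\bC|$ by computing the second directional derivative, $f''(x) = -\tr\big((\bA+x\bC)^{-1}\bC(\bA+x\bC)^{-1}\bC\big)\le 0$, while you conjugate by $\bA^{-1/2}$ and reduce to the concavity of $\sum_i\log(1+x\lambda_i)$; both then invoke the first-order inequality $f(1)\le f(0)+f'(0)$ with $f'(0)=\tr(\bA^{-1}\bC)$ from Proposition~\ref{prop:derivlogdet}. Your reduction is arguably cleaner and makes the domain of concavity transparent.

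However, the step you yourself flagged as delicate contains a genuine gap. From $|\bA+\bC|>0$ you conclude that $x=1$ lies in the connected component of $\{x : |\bA+x\bC|\neq 0\}$ containing $x=0$; that does not follow. The hypothesis only places $x=1$ somewhere in that set, and it can lie in a different component: take $n=2$, $\bA=\bI$ and $\bC=-11\,\bI$. Then $|\bA+\bC|=100>0$, but $|\bA+x\bC|$ vanishes at $x=1/11$, the segment $[0,1]$ leaves the positive definite cone, and the claimed inequality actually fails, since $\log|\bA|+\tr(\bA^{-1}\bC)=-22 < \log 100 = \log|\bA+\bC|$. So no signature argument can bridge this under the literal hypothesis $|\bA+\bC|>0$: what is really needed is $\bA+\bC>0$ (equivalently, $|\bA+x\bC|\neq 0$ for all $x\in[0,1]$), which is exactly what holds in the paper's application, where $\bA+\bC=\tilde\bL_k$ is positive definite. (The paper's own write-up glosses over the same point by simply taking $t=1$ to be in the domain.) With the hypothesis read as positive definiteness of $\bA+\bC$, your argument is complete; as written, the component claim is false and the "fix" does not work.
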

	
	\begin{proof}
		It suffices to show $f''(x) \leq 0$ whenever $\bA + x\bC > 0$. These derivatives are the first and second directional derivatives of $\log| \bA + x\bC |$ in the direction $\bC$,
		\[
		f'(x) = \tr ( (\bA + x\bC)^{-1} \bC ) \qquad f''(x) = -\tr ( (\bA + x\bC)^{-1} \bC(\bA + x\bC)^{-1} \bC ).
		\]
		Using the eigendecomposition of $(\bA + x\bC)^{-1}$, we can express it as $(\bA + x\bC)^{-1} = \bB \bB^\trans$ for some matrix $\bB$. Therefore,
		\begin{align*}
		\tr ( (\bA + x\bC)^{-1} \bC(\bA + x\bC)^{-1} \bC ) &= \tr (  \bC (\bA + x\bC)^{-1} \bC(\bA + x\bC)^{-1}) 
		\\&=  \tr (  \bC (\bA + x\bC)^{-1} \bC\bB \bB^\trans) 
		\\&= \tr (  (\bC \bB )^\trans (\bA + x\bC)^{-1} \bC\bB ).
		\end{align*}
		It is easy to see that $ (\bC \bB )^\trans (\bA + x\bC)^{-1} \bC\bB$ is a positive definite matrix because
		\[
		v^\trans (\bC \bB )^\trans (\bA + x\bC)^{-1} \bC\bB v = (\bC \bB v )^\trans (\bA + x\bC)^{-1} \bC\bB v > 0
		\]
		for all $v \in \R^n$ since $(\bA + x\bC)^{-1}$ is positive definite. Therefore, the sum of its eigenvalues are positive, so
		\[
		\tr ( (\bA + x\bC)^{-1} \bC(\bA + x\bC)^{-1} \bC ) = \tr (  (\bC \bB )^\trans (\bA + x\bC)^{-1} \bC\bB ) > 0. 
		\]
		
		Since $f(x)$ is concave, it lies below its tangent lines so
		\[
		f(t) \leq f(0) + f'(0)t \text{ for all $t$ in the domain of $f(t)$}.
		\]
		If $|\bA + \bC | > 0$ then we can take $t = 1$ to conclude
		\[
		\log|\bA| + \tr(\bA^{-1} \bC) \geq \log| \bA + \bC|.
		\]
	\end{proof}
	
	\begin{proposition}[$\Sum(\bxi(\bA) )$ is convex]\label{prop:xiconvex}
		Suppose $\bC$ is a symmetric matrix. The function
		\[
		f(x) = \Sum ( \bxi(\bA + x \bC) )
		\]
		is convex. In particular, we have
		\[
		\Sum ( \bxi(\bA) ) + \tr(  \bxi'(\bA) \bC) \leq  \Sum ( \bxi(\bA + \bC) )
		\]
		for all $\bC$. 
	\end{proposition}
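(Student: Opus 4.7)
The plan is to reduce convexity of $f(x) = \Sum(\bxi(\bA + x\bC))$ to the elementary fact that $a \mapsto a^p$ is convex on $\R$ for every even integer $p \geq 2$, and then obtain the ``in particular'' inequality as the tangent-line statement for convex functions.

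First I will use the definition \eqref{eq:ximatrix}, together with the identity $\Sum(\bm{M}) = \sum_{i,j} \bm{M}_{ij}$ and the assumption $\vb_p = \vec{0}$ for odd $p$, to write $f$ entrywise as $f(x) = \sum_{p \geq 2} \sum_{i,j \leq n} \vb_p(i)\vb_p(j)\,(\bA_{ij} + x\bC_{ij})^p$, where only even $p$ actually contribute. This decouples $f$ into a weighted sum of one-variable polynomials in $x$, each depending on a single matrix entry of the perturbation direction, so convexity in $x$ can be read off coordinate by coordinate.

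Next I will differentiate twice under the sum to get $f''(x) = \sum_{p \geq 2} p(p-1) \sum_{i,j} \vb_p(i)\vb_p(j)(\bA_{ij} + x\bC_{ij})^{p-2}\bC_{ij}^2$. Since only even $p \geq 2$ contribute, the exponent $p - 2$ is a non-negative even integer, so $(\bA_{ij} + x\bC_{ij})^{p-2} \geq 0$; combined with $p(p-1) \geq 0$, $\bC_{ij}^2 \geq 0$, and the non-negativity of the inverse temperatures assumed throughout the paper (which makes $\vb_p(i)\vb_p(j) \geq 0$), every summand of $f''(x)$ is non-negative. Hence $f''(x) \geq 0$ and $f$ is convex on $\R$.

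For the tangent-line inequality, convexity of $f$ gives $f(1) \geq f(0) + f'(0)$. Identifying $f(0) = \Sum(\bxi(\bA))$ and $f(1) = \Sum(\bxi(\bA+\bC))$ directly, I will compute $f'(0)$ by a single entrywise differentiation to obtain $f'(0) = \sum_{p \geq 2} p \sum_{i,j} \vb_p(i)\vb_p(j)\bA_{ij}^{p-1}\bC_{ij} = \Sum(\bxi'(\bA) \odot \bC) = \tr(\bxi'(\bA)\bC)$, where the last equality is the identity $\Sum(\bm{A} \odot \bm{B}) = \tr(\bm{A}\bm{B})$ for symmetric matrices recalled in Appendix~\ref{app:matrixderivs} and the derivative is as in Proposition~\ref{prop:derivtheta}. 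Substituting yields $\Sum(\bxi(\bA)) + \tr(\bxi'(\bA)\bC) \leq \Sum(\bxi(\bA + \bC))$, as required. There is no real analytic obstacle since $f$ decouples into a sum of one-dimensional polynomials; the only delicate point is parity bookkeeping combined with the sign of $\vb_p(i)\vb_p(j)$, so that every term in the second derivative is manifestly non-negative.
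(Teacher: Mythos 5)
Your proof is correct and follows essentially the same route as the paper: both reduce $\Sum(\bxi(\bA + x\bC))$ to an entrywise sum of even-power scalar functions with nonnegative coefficients $\vb_p(i)\vb_p(j)$, conclude convexity, and then obtain the displayed inequality as the tangent-line bound (you verify $f''\geq 0$ explicitly and compute $f'(0)=\tr(\bxi'(\bA)\bC)$, while the paper states coordinatewise convexity of $\bxi$ and sums the entrywise tangent inequalities, which is the same argument in slightly different packaging).
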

	
	\begin{proof}
		Since $\beta_p$ are positive and $\beta_p = 0$ for all odd $p$, $\bxi(\bA)$ is a convex function in each of its coordinates. Since the finite sum of convex functions are convex, 
		\[
		f(x) = \Sum ( \bxi(\bA + x \bC) )
		\]
		is convex. Since each entry of $\bxi$ is a convex function, we have
		\[
		\bxi(\bA)_{ij} + \bxi'(\bA)_{ij} \bC_{ij} \leq \bxi(\bA + \bC)_{ij}.
		\]
		Summing over $i,j \leq n$ implies
		\[
		\Sum ( \bxi(\bA) ) + \tr(  \bxi'(\bA) \bC) \leq  \Sum ( \bxi(\bA + \bC) ).
		\]
	\end{proof}
	
	\begin{proposition}[Inverse Matrices are Convex]\label{prop:inverseconvex}
		Let $\bA$ be a positive definite matrix, and suppose that $\bB, \bC \in \bS_+^n$ also satisfy $(\bA - \bB)^{-1} > 0$ and $(\bA - \bC)^{-1} > 0$. Then inverting matrices are convex,
		\[
		(\bA - ( \epsilon \bB + (1 - \epsilon) \bC ))^{-1} \leq \epsilon (\bA - \bB)^{-1} + (1 - \epsilon) (\bA - \bC)^{-1}.
		\]
	\end{proposition}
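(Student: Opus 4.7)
The plan is to reduce the statement to the classical operator convexity of matrix inversion on the positive definite cone and then prove that directly using the second derivative. Set $\bX = \bA - \bB$ and $\bY = \bA - \bC$; by hypothesis both are positive definite. Since
\[
\bA - (\epsilon \bB + (1-\epsilon)\bC) = \epsilon \bX + (1-\epsilon)\bY,
\]
the stated inequality is equivalent to
\[
(\epsilon \bX + (1-\epsilon)\bY)^{-1} \leq \epsilon \bX^{-1} + (1-\epsilon)\bY^{-1}.
\]
So it suffices to prove operator convexity of $\bM \mapsto \bM^{-1}$ on $\{\bM \in \bS_+^n \mmm \bM > 0\}$.

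Next, I would introduce the curve $g : [0,1] \to \bS_+^n$ defined by $g(t) = (t\bX + (1-t)\bY)^{-1}$, which stays in the open positive definite cone. Using Proposition~\ref{prop:partialinverse} with direction $\bC = \bX - \bY$ gives
\[
g'(t) = -g(t)(\bX - \bY)g(t),
\]
and differentiating once more by the product rule (again applying Proposition~\ref{prop:partialinverse}) yields
\[
g''(t) = 2\, g(t)(\bX - \bY)g(t)(\bX - \bY)g(t).
\]

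The key step is then to argue that $g''(t)$ is positive semidefinite as a matrix, not merely that each of its diagonal entries is non-negative. Since $g(t) > 0$, I would write $g(t) = \bP\bP^{\trans}$ with $\bP = g(t)^{1/2}$ symmetric, and set $\bK = \bP(\bX - \bY)\bP$, which is symmetric. Then
\[
g''(t) = 2\, \bP\, \bK^{2}\, \bP^{\trans},
\]
and for any $v \in \R^n$ we have $v^{\trans} g''(t) v = 2 \| \bK \bP^{\trans} v \|^2 \geq 0$, so $g''(t) \geq 0$ as a matrix. This is the only substantive point; the rest is bookkeeping.

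Operator convexity of $g$ then gives $g(\epsilon) \leq \epsilon\, g(1) + (1-\epsilon)\, g(0)$, namely
\[
(\epsilon \bX + (1-\epsilon)\bY)^{-1} \leq \epsilon \bX^{-1} + (1-\epsilon)\bY^{-1},
\]
which, substituting back $\bX = \bA - \bB$ and $\bY = \bA - \bC$, is exactly the claim. I do not anticipate any real obstacle beyond the matrix-valued factorization that certifies $g'' \geq 0$, which is the conceptual heart of the result.
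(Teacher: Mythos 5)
Your proof is correct and takes essentially the same approach as the paper: both establish convexity along the segment $t \mapsto \big(\bA - \bC - t(\bB - \bC)\big)^{-1}$ by computing the second derivative with the derivative-of-the-inverse formula and checking that it is non-negative. The only difference is organizational: you certify Loewner positivity of the matrix-valued second derivative via the symmetric square root of $g(t)$, whereas the paper fixes a vector $v$ and shows the scalar quadratic form $v^\trans\big(\bA - \bC - t(\bB - \bC)\big)^{-1}v$ has non-negative second derivative --- the same computation arranged in a slightly different order.
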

	
	\begin{proof}
		Let $v \in \R^n$ and $\bA, \bB \in \bS_+^n$. It suffices to show that
		\[
		f(t) = \tr( (\bA - t \bB)^{-1} v v^\trans)
		\]
		satisfies $f''(t) \geq 0$ for all $t$ in the domain. By the chain rule and Proposition~\ref{prop:partialinverse},
		\[
		f'(t) = \tr( (\bA - t \bB)^{-1} \bB (\bA - t \bB)^{-1} v v^\trans)
		\]
		and 
		\[
		f''(t) = 2 \tr( (\bA - t \bB)^{-1} \bB (\bA - t \bB)^{-1} \bB  (\bA - t \bB)^{-1} v v^\trans).
		\]
		Since $(\bA - t \bB)^{-1}$ is positive definite, we have
		\[
		f''(t) = 2 \tr( (\bB(\bA - t \bB)^{-1}v)^\trans (\bA - t \bB)^{-1} (\bB  (\bA - t \bB)^{-1} v) ) > 0. 
		\]
		
		The result in the proposition follows immediately. Let $v \in \R^n$, $\bA$ be a positive definite matrix, and suppose that $\bB, \bC \in \bS_+^n$ also satisfy $(\bA - \bB)^{-1} > 0$ and $(\bA - \bC)^{-1} > 0$. We have shown that
		\[
		g(t) = \tr( (\bA - \bC - t (\bB - \bC) )^{-1} v v^\trans) = v^\trans (\bA - \bC - t (\bB - \bC) )^{-1} v 
		\]
		is a convex function in for $t \in [0,1]$ so
		\[
		v^\trans(\bA - ( \epsilon \bB + (1 - \epsilon) \bC ))^{-1} v = g(\epsilon) \leq (1 - \epsilon) g(0) + \epsilon g(1) = \epsilon x^\trans (\bA - \bB)^{-1} v + (1 - \epsilon) v^\trans (\bA - \bC)^{-1} v.
		\]
		This holds for all $v \in \R^n$, so
		\[
		(\bA - ( \epsilon \bB + (1 - \epsilon) \bC ))^{-1} \leq \epsilon (\bA - \bB)^{-1} + (1 - \epsilon) (\bA - \bC)^{-1}.
		\]
	\end{proof}
	
	\begin{proposition}[Upper Bound on the Determinant]\label{prop:AMGM}
		If $\bA$ is a positive definite, then
		\[
		|\bA| \leq \Big( \frac{ \tr(\bA) }{n} \Big)^n.
		\]
	\end{proposition}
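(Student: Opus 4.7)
The plan is to reduce the inequality to the classical AM--GM inequality applied to the eigenvalues of $\bA$. Since $\bA$ is positive definite and symmetric, the spectral theorem gives an orthogonal diagonalization $\bA = \bm U \, \mathrm{diag}(\lambda_1,\dots,\lambda_n)\, \bm U^\trans$ with real eigenvalues $\lambda_1,\dots,\lambda_n > 0$.

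The key observation is that both sides of the desired inequality are spectral invariants: $|\bA| = \prod_{i=1}^n \lambda_i$ and $\tr(\bA) = \sum_{i=1}^n \lambda_i$. Thus the matrix inequality is equivalent to
\[
\prod_{i=1}^n \lambda_i \leq \left(\frac{1}{n}\sum_{i=1}^n \lambda_i\right)^n,
\]
which is the arithmetic--geometric mean inequality applied to the positive numbers $\lambda_1,\dots,\lambda_n$.

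Since AM--GM is a standard fact (provable, for instance, by Jensen's inequality applied to the concave function $\log$), the proof is essentially a one-line argument once the spectral decomposition is invoked. There is no real obstacle here: the only minor point to note is that the inequality extends to positive semidefinite $\bA$ by continuity, since both sides are continuous in $\bA$ and non-negative (with $|\bA|=0$ when $\bA$ is singular, in which case the inequality is trivial).
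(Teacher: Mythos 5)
Your proof is correct and follows essentially the same route as the paper: both reduce the claim to the AM--GM inequality applied to the (positive) eigenvalues of $\bA$, using $|\bA| = \prod_i \lambda_i$ and $\tr(\bA) = \sum_i \lambda_i$. The extra remark about extending to the semidefinite case by continuity is fine but not needed for the statement as given.
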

	
	\begin{proof}
		Since $\bA$ is positive definite, its eigenvalues $\lambda_1, \dots, \lambda_n$ are positive. Therefore, by the AM--GM inequality,
		\[
		|\bA|^{1/n} = \Big( \prod_{i = 1}^n \lambda_j \Big)^{1/n} \leq \frac{\sum_{i = 1}^n \lambda_i}{n} = \frac{ \tr(\bA) }{n}.
		\]
	\end{proof}
	
	\begin{proposition}[Admissible Perturbations of Positive Definite Matrices]\label{prop:pertposdef}
		If $\bA$ is a positive definite, then for all symmetric matrices $\bC$, there exists a $\epsilon^*$ such that
		\[
		\bA + \epsilon \bC
		\]
		is also positive definite for all $\epsilon < \epsilon^*$.
	\end{proposition}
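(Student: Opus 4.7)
The plan is to prove this by a direct quadratic form estimate, exploiting the fact that the cone $\bS_+^n$ of positive definite matrices is open inside the space of symmetric matrices. The key observation is that, since $\bA$ is positive definite, its smallest eigenvalue $\lambda_{\min}(\bA)$ is strictly positive, and a small symmetric perturbation cannot shift it across zero.

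I would first dispose of the trivial case $\bC = \bm 0$, for which any $\epsilon^* > 0$ works. Otherwise, set $\lambda := \lambda_{\min}(\bA) > 0$ and let $\|\bC\|_{op} := \sup_{\|v\| = 1} |v^\trans \bC v|$ denote the operator norm, which is finite because $\bC$ is symmetric. Then for every unit vector $v \in \R^n$ and every $\epsilon \geq 0$,
\[
v^\trans (\bA + \epsilon \bC) v = v^\trans \bA v + \epsilon\, v^\trans \bC v \geq \lambda - \epsilon \|\bC\|_{op}.
\]
Taking $\epsilon^* := \lambda / \|\bC\|_{op}$, the right-hand side is strictly positive for every $\epsilon \in [0, \epsilon^*)$, so $\bA + \epsilon \bC$ is positive definite throughout this interval. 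Since the bound holds uniformly over the unit sphere, no further compactness argument is needed.

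There is no real obstacle here; the proof is essentially a one-line application of the variational characterization of the smallest eigenvalue. The only bookkeeping point is that the statement as written ranges only over $\epsilon$ less than some positive $\epsilon^*$, so one need not worry about the sign of $\epsilon$; if one wanted the stronger symmetric statement $|\epsilon| < \epsilon^*$, the identical computation with $|\epsilon|$ in place of $\epsilon$ delivers it immediately.
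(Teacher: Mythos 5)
Your proof is correct and is essentially the same argument as the paper's: both lower-bound the quadratic form $v^\trans(\bA+\epsilon\bC)v$ by $\lambda_{\min}(\bA)\|v\|^2$ minus $\epsilon$ times a norm of $\bC$ times $\|v\|^2$, and choose $\epsilon^*$ as the ratio of these quantities (you use the operator norm, the paper uses $\|\bC\|_\infty$, an immaterial difference). Nothing further is needed.
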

	
	\begin{proof}
		We will show that $v^\trans (\bA + \epsilon \bC) v > 0$ for all $v \in \R^n$ and all $\epsilon$ sufficiently small. Since $\bA$ is positive definite, we have
		\[
		v^\trans (\bA + \epsilon \bC) v \geq \lambda_{min}(\bA) \|v\|^2 - \epsilon \| \bC\|_{\infty} \|v\|^2 = (\lambda_{min}(\bA)  - \epsilon \| \bC\|_{\infty} ) \|v\|^2 .
		\]
		Since $\lambda_{min}(\bA) > 0$, setting $\epsilon < \frac{\lambda_{min}(\bA)}{  \| \bC\|_{\infty} }$ guarantees $v^\trans (\bA + \epsilon \bC) v  > 0$. 
	\end{proof}
	
	\begin{proposition}[Hadamard Product of Positive Definite Matrices]\label{prop:gapxi}
		If for each $j \leq n$, there exists a $p \geq 2$ such that $\beta_p(j) \neq 0$, and both $|\bQ_\ell| > 0$ and $|\bQ_{\ell} - \bQ_{\ell - 1}| > 0$, then $|\bxi'(\bQ_{\ell}) - \bxi'(\bQ_{\ell - 1}) | > 0$.
	\end{proposition}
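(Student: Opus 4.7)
The plan is to factor the matrix increment so that the strict positivity of $\bQ_\ell - \bQ_{\ell - 1}$ can be used, and then invoke a strict version of the Schur product theorem. The key algebraic tool is the entrywise identity $a^{m} - b^{m} = (a-b)\sum_{k=0}^{m-1} a^{k} b^{m-1-k}$ applied coordinate by coordinate, which at the level of Hadamard powers reads
\[
\bA^{\circ m} - \bB^{\circ m} \;=\; (\bA - \bB) \odot \sum_{k=0}^{m-1} \bA^{\circ k} \odot \bB^{\circ (m - 1 - k)},
\]
with the convention $\bA^{\circ 0} = \bB^{\circ 0}$ equal to the all-ones matrix. Applying this with $m = p-1$ and summing against the inverse-temperature weights, I obtain the factorization
\[
\bxi'(\bQ_\ell) - \bxi'(\bQ_{\ell - 1}) \;=\; (\bQ_\ell - \bQ_{\ell - 1}) \odot \bT,
\]
where
\[
\bT \;=\; \sum_{p \geq 2} p\,(\vb_p \otimes \vb_p) \odot \sum_{k = 0}^{p-2} \bQ_\ell^{\circ k} \odot \bQ_{\ell - 1}^{\circ (p - 2 - k)}.
\]

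Next I would verify that $\bT$ is positive semidefinite with strictly positive diagonal. The Schur product theorem makes each summand positive semidefinite, since $\vb_p \otimes \vb_p$ is rank-one PSD and Hadamard powers of PSD matrices are PSD (with $\bQ^{\circ 0}$ interpreted as the all-ones matrix, itself PSD). For the diagonal, the entry $\bT_{ii}$ dominates the $k = p-2$ term, giving
\[
\bT_{ii} \;\geq\; \sum_{p \geq 2} p\,(\vb_p)_i^{\,2}\,(\bQ_\ell)_{ii}^{\,p-2}.
\]
Because $|\bQ_\ell| > 0$ forces $(\bQ_\ell)_{ii} > 0$, and because the hypothesis supplies at least one $p$ with $(\vb_p)_i \neq 0$ for each coordinate $i$, each $\bT_{ii}$ is strictly positive.

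The final step is a strict form of the Schur product theorem: if $\bA \geq 0$ has strictly positive diagonal and $\bB > 0$, then $\bA \odot \bB > 0$. I would justify this by taking a PSD decomposition $\bA = \sum_{k} u_k u_k^{\trans}$ and writing, for any nonzero $v$,
\[
v^{\trans}(\bA \odot \bB) v \;=\; \sum_{k} (\diag(u_k) v)^{\trans} \bB \, (\diag(u_k) v).
\]
Each term is $\geq 0$ by positivity of $\bB$, and the total vanishes only if $(u_k)_i v_i = 0$ for all $k,i$; but then $\bA_{ii} = \sum_k (u_k)_i^{\,2} = 0$ whenever $v_i \neq 0$, contradicting the positivity of the diagonal. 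Applying this with $\bA = \bT$ and $\bB = \bQ_\ell - \bQ_{\ell - 1}$ (which is strictly positive by hypothesis) yields $\bxi'(\bQ_\ell) - \bxi'(\bQ_{\ell - 1}) > 0$, and in particular $|\bxi'(\bQ_\ell) - \bxi'(\bQ_{\ell - 1})| > 0$. The main obstacle is really the bookkeeping in the second step: one must confirm that $|\bQ_\ell| > 0$ (rather than mere PSD of $\bQ_\ell$) is what secures the strict positivity of the diagonal of $\bT$, since $\bQ_{\ell - 1}$ is allowed to be degenerate (for instance $\bQ_0 = \bm 0$).
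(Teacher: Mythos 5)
Your proof is correct, and it starts from the same difference-of-Hadamard-powers factorization $\bxi'(\bQ_\ell)-\bxi'(\bQ_{\ell-1})=(\bQ_\ell-\bQ_{\ell-1})\odot\bT$ that the paper uses, but it closes the argument by a different key lemma. The paper truncates the $p$-series at a finite $M$ so that the relevant diagonal weights are positive, applies Oppenheim's inequality $\det(\bA\odot\bB)\geq\det(\bA)\prod_{j}\bB_{jj}$ to the truncated piece, and then adds back the positive semidefinite tail; you instead prove from scratch a strict Schur product lemma (if $\bA\geq 0$ has strictly positive diagonal and $\bB>0$, then $\bA\odot\bB>0$) via the identity $v^\trans(\bA\odot\bB)v=\sum_{k}(\diag(u_k)v)^\trans\bB\,(\diag(u_k)v)$, and apply it to the whole series at once. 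Your route is more elementary and self-contained: it avoids citing Oppenheim, avoids the truncation bookkeeping, and sidesteps two small blemishes in the paper's write-up (the claim that \emph{all} entries of $\sum_{2\leq p\leq M}p\,\vb_p\otimes\vb_p$ are positive, when the hypothesis only guarantees, and the argument only needs, positivity of the diagonal entries; and calling $\bQ_\ell^{\circ 0}$, the all-ones matrix, positive definite, when only its positive diagonal matters). Your observation that $|\bQ_\ell|>0$ is exactly what secures $\bT_{ii}>0$ through the $k=p-2$ term, since $\bQ_{\ell-1}$ may be degenerate, matches the role this hypothesis plays in the paper's diagonal bound. The only thing the paper's route buys in exchange for invoking a classical inequality is an explicit quantitative lower bound on the determinant, which is never used.
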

	
	\begin{proof}
		First recall the Hadamard product representation of $\bxi'(\bQ)$,
		\[
		\bxi'(\bQ) = \sum_{p \geq 2} p (\beta_p \beta_p^\trans) \odot (\bQ)^{\odot (p - 1)}.
		\]
		Using the difference of powers formula to factor term by term, we have
		\begin{align*}
		\bxi'(\bQ_\ell) - \bxi'(\bQ_{\ell - 1})&= \sum_{p \geq 2} p (\beta_p \beta_p^\trans) \odot \Big(  \bQ_\ell^{\odot (p - 1)} - \bQ_{\ell - 1}^{\odot (p - 1)} \Big)
		\\&=  (\bQ_\ell - \bQ_{\ell - 1}  ) \odot \sum_{p \geq 2} p (\beta_p \beta_p^\trans) \odot \sum_{0 \leq k \leq p - 2} \bQ_\ell^{\odot (p - 2) - k} \odot \bQ_{\ell-1}^{\odot k}.
		\end{align*}
		By the Schur product theorem, the above is the Hadamard product of positive semidefinite matrices, so it must be positive semidefinite. By our assumption on $\beta_p$, there exists a $M$ sufficiently large such that all entries of $\sum_{2 \leq p \leq M} p (\beta_p \beta_p^\trans)$ are positive. Therefore,
		\begin{align}
		\bxi'(\bQ_\ell) - \bxi'(\bQ_{\ell - 1}) &= (\bQ_\ell - \bQ_{\ell - 1}  ) \odot \sum_{2 \leq p \leq M} p (\beta_p \beta_p^\trans) \odot \sum_{0 \leq k \leq p - 2} \bQ_\ell^{\odot (p - 2) - k} \odot \bQ_{\ell-1}^{\odot k} \label{eq:hadamardfirst}
		\\&\quad + (\bQ_\ell - \bQ_{\ell - 1}  ) \odot \sum_{p > M} p (\beta_p \beta_p^\trans) \odot \sum_{0 \leq k \leq p - 2} \bQ_\ell^{\odot (p - 2) - k} \odot \bQ_{\ell-1}^{\odot k}  \label{eq:hadamardsecond}.
		\end{align}
		It suffices to show that the first matrix term \eqref{eq:hadamardfirst} is positive definite, because the second matrix term \eqref{eq:hadamardsecond} is positive semidefinite (the product of positive semidefinite matrices), and the sum of a positive definite matrix and a positive semidefinite matrix is positive definite. To prove this fact, we recall Oppenheim's inequality %\color{gray} (Page 144 of the book Nonnegative Matrices and Applications)\color{black}
		, which states for all positive semidefinite matrices $\bA$ and $\bB$,
		\[
		\det(\bA \odot \bB) \geq \det(\bA) \prod_{j \leq n} \bB_{jj}.
		\]
		Therefore, we have the determinant of \eqref{eq:hadamardfirst} is bounded below by
		\begin{equation}\label{eq:hadamardlowbd}
		|\bQ_\ell - \bQ_{\ell - 1}  | \cdot \prod_{i \leq n} \bigg( \sum_{2 \leq p \leq M} p (\beta_p \beta_p^\trans) \odot \sum_{0 \leq k \leq p - 2} \bQ_\ell^{\odot (p - 2) - k} \odot \bQ_{\ell-1}^{\odot k} \bigg)_{ii}.
		\end{equation}
		We claim that each diagonal element appearing above is strictly positive. Since $\bQ_{\ell} > 0$, for each $p \geq 2$, we have
		\[
		\bA_p := \sum_{0 \leq k \leq p - 2} \bQ_\ell^{\odot (p - 2) - k} \odot \bQ_{\ell-1}^{\odot k}  = \bQ_{\ell}^{\odot (p - 2)} + \sum_{1 \leq k \leq p - 2} \bQ_\ell^{\odot (p - 2) - k} \odot \bQ_{\ell-1}^{\odot k}  > 0
		\]
		since the first term is the Hadamard power of a positive definite matrix and hence positive definite by the Schur product theorem. Since the diagonal elements of a positive definite matrix are all strictly positive, we have for all $i \leq n$,
		\begin{align*}
		\bigg( \sum_{2 \leq p \leq M} p (\beta_p \beta_p^\trans) \odot \bA_p \bigg)_{ii} \geq  \bigg( \sum_{2 \leq p \leq M} p (\beta_p \beta_p^\trans) \bigg)_{ii} \cdot \min_{2 \leq p \leq M} \bigl( \bA_p\bigr)_{ii} > 0.
		\end{align*} 
		Substituting this fact into \eqref{eq:hadamardlowbd} and using the fact $|\bQ_\ell - \bQ_{\ell - 1}  | > 0$ implies \eqref{eq:hadamardfirst} is positive definite, so the superadditivity of the determinant for positive semidefinite matrices implies
		\[
		| \bxi'(\bQ_\ell) - \bxi'(\bQ_{\ell - 1} ) | > 0
		\]
		as required. 
	\end{proof}

	\begin{proposition}[Monotonicity of Products]\label{prop:monotone}
		If $\bA, \bC \in \bS_+^n$, then
		\[
		\tr( \bA \bC ) \geq 0.  
		\]
		In particular, if $\bB \geq \bA$, then
		\[
		\tr( \bB \bC ) \geq \tr( \bA\bC ). 
		\]
	\end{proposition}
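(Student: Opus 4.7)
The plan is to prove the first inequality by factoring one of the positive semidefinite matrices using its symmetric square root, and then derive the second as an immediate corollary via linearity of the trace. Specifically, since $\bA \in \bS_+^n$, the spectral theorem gives a symmetric square root $\bA^{1/2} \in \bS_+^n$ with $\bA = \bA^{1/2} \bA^{1/2}$. I would then apply the cyclic property $\tr(\bX\bY\bZ) = \tr(\bZ\bX\bY)$ to rewrite
\[
\tr(\bA \bC) = \tr(\bA^{1/2} \bA^{1/2} \bC) = \tr(\bA^{1/2} \bC \bA^{1/2}).
\]
The matrix $\bA^{1/2} \bC \bA^{1/2}$ is positive semidefinite, because for any $v \in \R^n$ we have $v^\trans \bA^{1/2} \bC \bA^{1/2} v = (\bA^{1/2} v)^\trans \bC (\bA^{1/2} v) \geq 0$ using $\bC \in \bS_+^n$. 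The trace of a positive semidefinite matrix equals the sum of its (non-negative) eigenvalues, so it is non-negative, which yields $\tr(\bA\bC) \geq 0$.

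For the monotonicity statement, the assumption $\bB \geq \bA$ means precisely that $\bB - \bA \in \bS_+^n$, so applying the first part to the pair $(\bB - \bA, \bC)$ together with linearity of the trace gives
\[
\tr(\bB\bC) - \tr(\bA\bC) = \tr((\bB - \bA)\bC) \geq 0.
\]
There is no real obstacle here; the only substantive ingredient is the existence of the symmetric square root, which is a standard consequence of the spectral decomposition for symmetric matrices. The argument is entirely self-contained and does not rely on any of the preceding propositions of the appendix.
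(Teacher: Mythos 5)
Your proof is correct. The route is slightly different from the paper's: you factor $\bA = \bA^{1/2}\bA^{1/2}$ via its symmetric square root and observe that $\tr(\bA\bC) = \tr(\bA^{1/2}\bC\bA^{1/2})$ is the trace of a positive semidefinite matrix, whereas the paper instead takes the eigendecomposition $\bC = \bR\bL\bR^\trans$, writes the diagonal factor as a sum of rank-one terms $v_iv_i^\trans$ with $v_i = \sqrt{\lambda_i}e_i$, and reduces $\tr(\bA\bC)$ to a sum of quadratic forms $(\bR v_i)^\trans \bA (\bR v_i) \geq 0$. Both arguments rest on the spectral theorem plus the cyclic property of the trace, so the difference is cosmetic rather than conceptual: your version is a bit more compact and treats $\bA$ and $\bC$ symmetrically once the square root is in hand, while the paper's rank-one decomposition avoids invoking the square root as a named object and makes the non-negativity visible term by term as ordinary quadratic forms. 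The second half (monotonicity via linearity applied to $\bB - \bA$) is identical in both.
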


	\begin{proof}
		Consider the eigendecomposition $\bC = \bR \bL \bR^\trans$, where $\bL = \diag(\lambda_1, \dots, \lambda_n)$ is the diagonal matrix of eigenvalues and $\bR$ is an orthogonal matrix. Since $\bL$ is diagonal, we can write it as a sum of real valued vectors $v_1, \dots, v_n$,
		\[
		\bL = \sum_{i = 1}^n v_i v_i^\trans,
		\]
		where $v_i = \sqrt{\lambda_i} e_i$. Since $\tr(\bA \bB \bC) = \tr( \bC\bA \bB)$, the rank 1 decomposition above implies that
		\[
		\tr( \bA \bC ) = \tr( \bR^\trans \bA \bR \bL  ) = \sum_{i = 1}^n \tr( \bR^\trans \bA \bR v_i v_i^\trans ) = \sum_{i = 1}^n \tr( (\bR v_i)^\trans \bA (\bR v_i) ) \geq 0,
		\]
		since $\bA$ is positive semidefinite. If $\bB \geq \bA$, then $\bB - \bA$ is positive semidefinite, so
		\[
		\tr( (\bB - \bA) \bC ) \geq 0 \implies \tr( \bB \bC ) \geq \tr( \bA\bC ).
		\]
	\end{proof}

	\subsection{Calculus Results}
	
	\iffalse %we don't need this result
	We can use this to continuously extend our derivative.
	
	\begin{proposition}\label{prop:calc}
		If $\lim_{x \to c} f'(c) = L$ exists, then $f'(c)$ exists and $f'(c) = L$.
	\end{proposition}
	
	\begin{proof}
		By the meanvalue theorem, there exists a $x_h \in (c, c + h)$ such that
		\[
		\frac{ f(c + h) - f(c) }{h} = f'(x_h).
		\]
		Since $\lim_{x \to c} f'(c) = L$ and $x_h \to c$ we have
		\[
		\lim_{h \to 0}\frac{ f(c + h) - f(c) }{h} = \lim_{h \to 0} f'(x_h) = L.
		\]
	\end{proof}
	\fi
	
	\begin{proposition}\label{prop:interchangeinf} Recall the functions $\sP_r^\epsilon( \bL,\vbQ)$ defined in \eqref{eq:pertparisi} and $\sC_r^\epsilon(\vbQ)$ defined in \eqref{eq:pertcrisanti}. We have
	\[
	\lim_{\epsilon \to 0} \inf_{\Lambda,Q}  \sP_r^\epsilon( \bL,\vbQ) = \inf_{\Lambda,Q} \lim_{\epsilon \to 0} \sP_r^\epsilon(\bL,\vbQ)  =  \inf_{\Lambda,Q} \sP_r(\bL,\vbQ)
	\]
	and
	\[
	\lim_{\epsilon \to 0} \inf_{Q}  \sC_r^\epsilon(\vbQ) = \inf_{Q} \lim_{\epsilon \to 0} \sC_r^\epsilon(\vbQ)  =  \inf_{Q} \sC_r(\vbQ) .
	\]
	\end{proposition}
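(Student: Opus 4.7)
The plan is to exploit monotonicity in $\epsilon$. Since $\sP_r^\epsilon = \sP_r + \epsilon\sE_r$ and $\sC_r^\epsilon = \sC_r + \epsilon \sE_r$ with
\[
\sE_r(\vbQ) = -\sum_{0 \leq k \leq r-1} \log |\bQ_{k+1} - \bQ_k| \geq 0
\]
by Proposition~\ref{prop:AMGM}, both $\sP_r^\epsilon$ and $\sC_r^\epsilon$ are pointwise non-increasing as $\epsilon \downarrow 0$. Their pointwise limits coincide with $\sP_r$ and $\sC_r$ on paths with strictly positive definite increments, and equal $+\infty$ on paths where some increment $\bQ_{k+1}-\bQ_k$ is degenerate.

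For the first equality, I would chain together
\begin{align*}
\lim_{\epsilon \to 0} \inf_{\bL, \vbQ} \sP_r^\epsilon(\bL, \vbQ) &= \inf_{\epsilon > 0} \inf_{\bL, \vbQ} \sP_r^\epsilon(\bL, \vbQ) \\
&= \inf_{\bL, \vbQ} \inf_{\epsilon > 0} \sP_r^\epsilon(\bL, \vbQ) = \inf_{\bL, \vbQ} \lim_{\epsilon \to 0} \sP_r^\epsilon(\bL, \vbQ),
\end{align*}
where the outer equalities use that a non-increasing net converges to its infimum, and the middle equality is commutativity of a double infimum. The identical argument applies to $\sC_r^\epsilon$.

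The second equality $\inf \lim_\epsilon \sP_r^\epsilon = \inf \sP_r$ reduces to showing that paths with degenerate increments can be ignored when taking the infimum of $\sP_r$. The $\geq$ direction is automatic from the pointwise inequality. For $\leq$, given any $(\bL, \vbQ)$ admissible for $\sP_r$, I would use the convex perturbation $\bQ_k^\delta := (1-\delta) \bQ_k + \delta (k/r) \bQ$ for $1 \leq k \leq r-1$. Because $\bQ$ is strictly positive definite, every increment satisfies
\[
\bQ_{k+1}^\delta - \bQ_k^\delta = (1-\delta)(\bQ_{k+1}-\bQ_k) + (\delta/r)\bQ > 0
\]
for $\delta > 0$, so $\vbQ^\delta$ falls into the regime where $\lim_\epsilon \sP_r^\epsilon = \sP_r$. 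Because $\vbQ^\delta \to \vbQ$, the matrices $\bL_p^\delta$ built from \eqref{def:L} converge to $\bL_p$, so $|\bL_1^\delta| > 0$ for all sufficiently small $\delta$ when $|\bL_1| > 0$. Continuity of $\sP_r$ on this domain then gives $\sP_r(\bL, \vex, \vbQ^\delta) \to \sP_r(\bL, \vex, \vbQ)$, and first sending $\delta \to 0$ and then taking the infimum over $(\bL, \vbQ)$ gives $\inf_{\bL,\vbQ} \lim_\epsilon \sP_r^\epsilon \leq \inf_{\bL,\vbQ} \sP_r$. The same perturbation works for $\sC_r$, where additionally $|\bQ - \bQ_{r-1}^\delta| = |(1-\delta)(\bQ-\bQ_{r-1}) + (\delta/r)\bQ| > 0$ is preserved.

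The only substantive step is the density/continuity argument in the last paragraph; the swapping of infima is routine monotonicity. There is no serious obstacle, since $\sP_r$ and $\sC_r$ are analytic in their matrix arguments on the region where the relevant log-determinants stay bounded away from $-\infty$, which is precisely the regime the convex perturbation delivers us to.
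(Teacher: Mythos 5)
Your proposal is correct and follows essentially the same route as the paper: monotonicity of $\sP_r^\epsilon$ and $\sC_r^\epsilon$ in $\epsilon$ to rewrite the limit as an infimum over $\epsilon$ and commute the two infima, followed by a density-plus-continuity argument showing that restricting to strictly increasing paths does not change $\inf \sP_r$ or $\inf \sC_r$. The only difference is cosmetic: the paper asserts that strictly increasing paths are dense in the monotone paths and that $\sP_r$, $\sC_r$ are continuous there, whereas you make this explicit with the convex perturbation $\bQ_k^\delta = (1-\delta)\bQ_k + \delta(k/r)\bQ$, which is a perfectly valid way to justify that assertion.
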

	
	\begin{proof}
		Since $\sP_r^\eps = \sP + \epsilon \sE_r$ and $\sE_r \geq 0$, $\sP_r^\eps$ is decreasing in $\epsilon$. The functional $\sP_r^\epsilon( \bL,\vbQ)$ is only defined for strictly monotone sequences $\vbQ$. If we restrict $\sP_r$ to strictly monotone sequences, and take the infimums only over $\vbQ$ with strictly increasing increments then
		\[
		\lim_{\epsilon \to 0} \inf_{\Lambda,Q}  \sP_r^\epsilon( \bL,\vbQ) = \inf_{\epsilon, \Lambda,Q}  \sP_r^\epsilon( \bL,\vbQ)  = \inf_{\Lambda,Q} \lim_{\epsilon \to 0} \sP_r^\epsilon(\bL,\vbQ) = \inf_{\Lambda,Q}  \sP_r( \bL,\vbQ)
		\]
		since $\inf_{\Lambda,Q}  \sP_r^\epsilon( \bL,\vbQ)$ is also decreasing in $\epsilon$. Furthermore, since $\sP_r$ is continuous and the space of $\vbQ$ with increasing increments is the closure of the paths with strictly increasing elements we can take the infimum over all paths of the form \eqref{eq:xqseq} without changing the value of $\inf_{\Lambda,Q}  \sP_r( \bL,\vbQ)$. 
		
		The proof for $\sC_r^\epsilon$ is identical. %The issue that $\sP_r^\epsilon$ is defined on the interior of the space of paths is not an issue because of continuity. The proof for $\sC_r^\epsilon$ is identical.
	\end{proof}
	
	\begin{proposition}[Uniform Continuity of $\sC$ with respect to Temperature]\label{prop:unifcontC}
		Let $\sC_{\bvb_1}(\vex,\vbQ)$ and $\sC_{\bvb_2}(\vex,\vbQ)$ denote the Crisanti--Sommers functional \eqref{eq:crisanti} with respect to $\bvb_1$ and $\bvb_2$. If
		\begin{equation}\label{eq:betaassump}
		\sum_{p \geq 2}  \| \vb^1_p \otimes \vb^1_p - \vb^2_p \otimes \vb^2_p \|_1 \leq \delta,
		\end{equation}
		then for any $\vx, \vbQ$,
		\[
		|\sC_{\bvb_1}(\vex,\vbQ) - \sC_{\bvb_2}(\vex,\vbQ)| \leq 2\delta.
		\]
	\end{proposition}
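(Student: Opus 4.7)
The plan is to isolate the dependence on temperature in $\sC_r(\vex,\vbQ)$. Since $\vh$, $\vex$, $\vbQ$, $\bQ$, and the matrices $\bd_k$ defined in \eqref{def:D} are all independent of $\bvb$, every term in \eqref{eq:crisanti} except $\sum_{1 \leq k \leq r-1} x_k \Sum\big(\bxi(\bQ_{k+1}) - \bxi(\bQ_k)\big)$ cancels in the difference $\sC_{\bvb_1}(\vex,\vbQ) - \sC_{\bvb_2}(\vex,\vbQ)$. Define
\[
\Delta\bxi(\bA) := \sum_{p \geq 2} \big(\vb_p^1 \otimes \vb_p^1 - \vb_p^2 \otimes \vb_p^2\big) \odot \bA^{\circ p}, \qquad a_k := \Sum(\Delta\bxi(\bQ_k)),
\]
so the goal reduces to proving $\big|\sum_{k=1}^{r-1} x_k(a_{k+1}-a_k)\big| \leq 4\delta$.

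First I would establish a uniform pointwise bound $|a_k| \leq \delta$. By the monotonicity $\bm 0 \leq \bQ_k \leq \bQ$ together with $Q_{jj} = 1$, Cauchy--Schwarz for psd quadratic forms gives $|(\bQ_k)_{ij}| \leq \sqrt{(\bQ_k)_{ii}(\bQ_k)_{jj}} \leq 1$, so every entry of each Hadamard power $\bQ_k^{\circ p}$ has absolute value at most $1$. Expanding $\Sum(\cdot)$ over matrix entries,
\[
|a_k| \leq \sum_{p \geq 2} \sum_{i,j \leq n} \big|(\vb_p^1 \otimes \vb_p^1 - \vb_p^2 \otimes \vb_p^2)_{ij}\big| \leq \sum_{p \geq 2} \|\vb_p^1 \otimes \vb_p^1 - \vb_p^2 \otimes \vb_p^2\|_1 \leq \delta
\]
by the hypothesis \eqref{eq:betaassump}.

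The core step, and the only place where care is needed, is to combine these pointwise bounds into an estimate that is uniform in the discretization size $r$; a naive triangle inequality would produce a bound of order $r\delta$, which is useless. I would instead apply Abel summation: since $(x_k)$ is non-decreasing with $x_k \in [0,1]$,
\[
\sum_{k=1}^{r-1} x_k(a_{k+1} - a_k) = x_{r-1}(a_r - a_1) - \sum_{k=1}^{r-2}(x_{k+1} - x_k)(a_{k+1} - a_1).
\]
Using $|a_j - a_1| \leq 2\delta$ together with the telescoping identity $\sum_{k=1}^{r-2}(x_{k+1} - x_k) = x_{r-1} - x_1 \leq 1$, the right side is bounded in absolute value by $2\delta + 2\delta = 4\delta$. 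Combined with the overall factor of $\frac{1}{2}$ in \eqref{eq:crisanti}, this yields the claimed bound $|\sC_{\bvb_1}(\vex,\vbQ) - \sC_{\bvb_2}(\vex,\vbQ)| \leq 2\delta$.
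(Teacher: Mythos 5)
Your proposal is correct and follows essentially the same route as the paper: isolate the temperature dependence in the single $\sum_k x_k\,\Sum(\bxi(\bQ_{k+1})-\bxi(\bQ_k))$ term, bound each $|\Sum(\bxi_{\bvb_1}(\bQ_k)-\bxi_{\bvb_2}(\bQ_k))|$ by $\delta$ using $|(\bQ_k)_{ij}|\leq 1$ and \eqref{eq:betaassump}, and then use summation by parts so the telescoping increments $x_{k+1}-x_k$ give a bound independent of $r$. Your Abel-summation grouping (subtracting $a_1$) yields $4\delta$ before the factor $\tfrac12$ rather than the paper's slightly tighter bookkeeping, but this is only a cosmetic difference and the stated bound $2\delta$ follows either way.
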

	
	\begin{proof}
		Since $\sC_r(\vex,\vbQ)$ only depends on temperature through $\bxi$, we only have to find a bound for
		\begin{equation}\label{eq:temperaturebound}
		\frac{1}{2} \bigg| \sum_{1 \leq k \leq r-1} x_k \cdot \Sum \big( \bxi_{\bvb_1} (\bQ_{k + 1}) - \bxi_{\bvb_1} (\bQ_{k})\big) -  \sum_{1 \leq k \leq r-1} x_k \cdot \Sum \big( \bxi_{\bvb_2} (\bQ_{k + 1}) - \bxi_{\bvb_2} (\bQ_{k})\big)   \bigg|,
		\end{equation}
		where 
		\[
		\bxi_{\bvb_1}(\bQ) =  \sum_{p \geq 2} (\vb^1_p \otimes \vb^1_p) \odot \bQ^{\circ p} \qquad \text{ and } \qquad \bxi_{\bvb_2}(\bQ) =  \sum_{p \geq 2} (\vb^2_p \otimes \vb^2_p) \odot \bQ^{\circ p}.
		\]
		Since $(\bQ_p)_{ij} \leq 1$ for any matrix $\bm 0 \leq \bQ_p \leq \bQ$, the assumption \eqref{eq:betaassump} implies
		\[
		\Big| \Sum\big( \bxi_{\bvb_1}(\bQ_p) - \bxi_{\bvb_2}(\bQ_p) \big) \Big| \leq \sum_{p \geq 2} \sum_{i,j \leq n} \Big| (\vb^1_p(i)\vb^1_p(j) - \vb^2_p(i)\vb^2_p(j)) \Big| \leq \delta.
		\]
		Using summation by parts, we see that 
		\[
		\sum_{1 \leq k \leq r-1} x_k \cdot \Sum \big( \bxi (\bQ_{k + 1}) - \bxi (\bQ_{k})\big) = -\sum_{1 \leq k \leq r-1} (x_{k} - x_{k-1}) \Sum \big( \bxi (\bQ_{k}) ) + x_{r - 1} \Sum( \bxi(\bQ_r) ),
		\]
		so \eqref{eq:temperaturebound} is bounded by
		\[
		\sum_{1 \leq k \leq r-1} (x_{k} - x_{k-1}) \Big| \Sum \big( \bxi_{\bvb_1} (\bQ_{k}) ) - \Sum \big( \bxi_{\bvb_2} (\bQ_{k}) ) \Big| + x_{r - 1} | \Sum( \bxi_{\bvb_1}(\bQ_r) ) - \Sum( \bxi_{\bvb_2}(\bQ_r) ) | \leq 2\delta.
		\]
	\end{proof}
	
	\begin{proposition}[Uniform Continuity of $\sP$ with respect to Temperature]\label{prop:unifcontP}
		Let $\sP_{\bvb_1}(\bL,\vex,\vbQ)$ and $\sP_{\bvb_2}(\bL,\vex,\vbQ)$ denote the Parisi functional \eqref{eq:parisi} with respect to $\bvb_1$ and $\bvb_2$. If
		\begin{equation}\label{eq:betassump2}
		\sum_{p \geq 2}  \| \vb^1_p \otimes \vb^1_p - \vb^2_p \otimes \vb^2_p \|_1 \leq \delta,
		\end{equation}
		then
		\[
		\Big|\inf_{r,\Lambda,x,Q} \sP_{\bvb_1}(\bL,\vex,\vbQ) - \inf_{r,\Lambda,x,Q} \sP_{\bvb_2}(\bL,\vex,\vbQ) \Big| \leq \delta.
		\]
	\end{proposition}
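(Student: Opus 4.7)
The plan is to sidestep the variational structure of $\sP_r$ and instead prove the bound at the level of the finite-volume free energy, exploiting Theorem~\ref{theo:MAINParisi} (which was used earlier to even \emph{define} $\sP_r$ as the limit of $F_N^\epsilon$). By Theorem~\ref{theo:MAINParisi} applied separately at $\bvb_1$ and $\bvb_2$,
\[
\inf_{r,\Lambda,x,Q} \sP_{\bvb_j}(\bL,\vex,\vbQ) = \lim_{\epsilon\to 0}\lim_{N\to\infty} F_N^\epsilon(\bQ;\bvb_j), \qquad j = 1,2,
\]
where the notation makes explicit the dependence of $F_N^\epsilon$ on the inverse-temperature vector. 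It therefore suffices to produce an $N$-uniform Lipschitz bound for $F_N^\epsilon$ that collapses to $\delta$ in the double limit.

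To do this I would use the standard Guerra-type Gaussian interpolation. Let $H_N^{(1)}$ and $H_N^{(2)}$ be Hamiltonians at $\bvb_1$ and $\bvb_2$ built from \emph{independent} disorder, and set
\[
H_t(\bvs) = \sqrt{1-t}\,H_N^{(1)}(\bvs) + \sqrt{t}\,H_N^{(2)}(\bvs), \qquad t \in [0,1],
\]
so that $\E[H_t(\bvs^\ell) H_t(\bvs^{\ell'})] = N\Sum\bxi_t(\bR(\bvs^\ell,\bvs^{\ell'}))$ with $\bxi_t = (1-t)\bxi_{\bvb_1} + t\bxi_{\bvb_2}$. Writing $F_N(t)$ for the corresponding constrained free energy on $Q_N^\epsilon$, one has $F_N(0) = F_N^\epsilon(\bQ;\bvb_1)$ and $F_N(1) = F_N^\epsilon(\bQ;\bvb_2)$. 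The standard Gaussian integration-by-parts computation (differentiate in $t$, then apply Stein's identity to each disorder variable) yields
\[
\frac{d}{dt}F_N(t) = \frac{1}{2}\E\bigl\langle \Sum(\bxi_{\bvb_2}-\bxi_{\bvb_1})(\bR^{1,1}) - \Sum(\bxi_{\bvb_2}-\bxi_{\bvb_1})(\bR^{1,2}) \bigr\rangle_{t},
\]
where $\langle\cdot\rangle_t$ is the two-replica Gibbs average under $H_t$ with configurations restricted to $Q_N^\epsilon$.

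Next I would bound the integrand entrywise. For any $\bvs, \bvs' \in Q_N^\epsilon$, the diagonal entries of $\bR(\bvs,\bvs)$ and $\bR(\bvs',\bvs')$ lie in $[1-\epsilon,1+\epsilon]$, so by Cauchy--Schwarz every entry of $\bR(\bvs,\bvs')$ is bounded in absolute value by $1+\epsilon$. Bounding coefficientwise in $p$ and entrywise in $(j,j')$ gives
\[
\bigl|\Sum(\bxi_{\bvb_2}-\bxi_{\bvb_1})(\bR)\bigr| \leq \sum_{p \geq 2} (1+\epsilon)^{p}\,\|\vb^1_p\otimes\vb^1_p - \vb^2_p\otimes\vb^2_p\|_{1},
\]
and integrating in $t \in [0,1]$ yields the same bound for $|F_N^\epsilon(\bQ;\bvb_1) - F_N^\epsilon(\bQ;\bvb_2)|$, uniformly in $N$. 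Sending $N \to \infty$ preserves the bound, and then $\epsilon \to 0$ collapses it to $\sum_{p \geq 2}\|\vb^1_p\otimes\vb^1_p - \vb^2_p\otimes\vb^2_p\|_1 \leq \delta$ by dominated convergence, whose envelope $\sum_p 2^p(\|\vb^1_p\otimes\vb^1_p\|_1 + \|\vb^2_p\otimes\vb^2_p\|_1)$ is summable by the standing assumption $\sum_p 2^p \vb^j_p(i)^2 < \infty$.

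The main obstacle is entirely technical rather than conceptual: carefully executing the Gaussian integration-by-parts identity for the interpolated constrained Gibbs measure (a routine but bookkeeping-heavy Guerra/Talagrand calculation), and justifying the interchange of limit and sum in the $\epsilon \to 0$ step using the envelope above. A purely variational proof, directly perturbing the minimizer $(\bL,\vex,\vbQ)$ of $\sP_{\bvb_1}$ to an admissible point for $\sP_{\bvb_2}$ (e.g., replacing $\bL$ by $\bL + \sum_k x_k(\bxi'_{\bvb_2}-\bxi'_{\bvb_1})(\bQ_{k+1}-\bQ_k)$ so that $\bL_1$ is preserved), is possible but would require comparing many $\log\det$ and trace terms and would only produce a larger constant; the Gaussian interpolation route is both cleaner and gives the sharp constant $\delta$ stated in the proposition.
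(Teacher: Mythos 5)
Your proposal is correct and takes essentially the same route as the paper: the paper's proof also passes through the free-energy representation of Theorem~\ref{theo:MAINParisi} and applies Gaussian interpolation between the Hamiltonians at $\bvb_1$ and $\bvb_2$, bounding the derivative of the interpolated free energy by $\delta$. One small simplification you missed: since each coordinate $\bs(j)$ lies on $S_N$, Cauchy--Schwarz gives $\|\bR\|_\infty \leq 1$ exactly (the $\epsilon$-constraint only affects off-diagonal self-overlaps), so the $(1+\epsilon)^p$ factors and the dominated-convergence step are unnecessary and the bound $\delta$ is uniform in $N$ and $\epsilon$.
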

	
	\begin{proof}
		Recall that the Parisi functional is a the limit of the free energy,
		\[
		\lim_{N \to \infty} \frac{1}{N} \E \log \int_{\bQ} \exp H^{\bvb}_N(\sigma) \, d \lambda_N = \inf_{r,\Lambda,x,Q} \sP_{\bvb}(\bL,\vex,\vbQ) .
		\]
		We can use Gaussian interpolation to prove uniform continuity. Consider the Hamiltonian,
		\[
		H_t(\sigma) = \sqrt{t} H^{\bvb_1}_N(\sigma) + \sqrt{1 - t} H^{\bvb_2}_N(\sigma),
		\]
		and the interpolating free energy,
		\[
		\varphi( t ) = \frac{1}{N} \E \log \int_{\bQ} \exp H_t(\sigma) \, d \lambda_N .
		\]
		Differentiating with respect to $t$ and integrating by parts, we see that
		\[
		\varphi'( t ) =  \frac{1}{N} \Big\langle \frac{d}{dt} H_t(\sigma) \Big\rangle_t = \frac{1}{2} \Big\langle \Sum(\bxi_{\bvb_1}(\bR_{1,1}) - \bxi_{\bvb_2}(\bR_{1,1}) ) -\Sum ( \bxi_{\bvb_1}(\bR_{1,2}) - \bxi_{\bvb_2}(\bR_{1,2})) )  \Big\rangle_t
		\]
		where $\langle \cdot \rangle_t$ is the Gibbs average proportional to $e^{H_t(\sigma)}$. Since $\|\bR\|_\infty \leq 1$, the assumption \eqref{eq:betassump2} implies
		\[
		\Big| \Sum\big( \bxi_{\bvb_1}(\bR) - \bxi_{\bvb_2}(\bR) \big) \Big| \leq \sum_{p \geq 2} \sum_{i,j \leq n} \Big| (\vb^1_p(i)\vb^1_p(j) - \vb^2_p(i)\vb^2_p(j)) \Big| \leq \delta.
		\]
		Therefore, $|\varphi'( t )| \leq \delta$, so
		\[
		\Big| \frac{1}{N} \E \log \int_{\bQ} \exp H^{\bvb_1}_N(\sigma) \, d \lambda_N - \frac{1}{N} \E \log \int_{\bQ} \exp H^{\bvb_2}_N(\sigma) \, d \lambda_N \Big| \leq \delta,
		\]
		and taking limits implies
		\[
		\Big|\inf_{r,\Lambda,x,Q} \sP_{\bvb_1}(\bL,\vex,\vbQ) - \inf_{r,\Lambda,x,Q} \sP_{\bvb_2}(\bL,\vex,\vbQ) \Big| \leq \delta.
		\]
	\end{proof}
	
	\section{Derivatives of $\sP_r$ and $\sC_r$}
	
	\subsection{Derivatives of $\sP_r$ with respect to $Q$}\label{app:derivativesP}
	We use summation by parts, to write $\sP_r(\bL,\vex, \vbQ)$ as
	\begin{align}
	\sP_r(\bL,\vex, \vbQ) &= \frac{1}{2}\Big[ \tr(\bL \bQ) - n + \tr( (\vh\vh^{\trans} + \bxi'(\bQ_1)) \bL_1^{-1} )  - \sum_{2 \leq k \leq r} \Big( \frac{1}{x_k} - \frac{1}{x_{k - 1}} \Big) \log | \bL_k |  - \frac{1}{x_1} \log | \bL_1| \notag \\
	&\quad  - \sum_{1 \leq k \leq r-1} x_k \cdot \tr \big( \bm{1} \times (\btheta (\bQ_{k + 1}) - \btheta (\bQ_{k})) \big)\Big] \label{eq:derivp}.
	\end{align}
	Since $(\bL_k)_{k = 1}^r$ is a function of $(\bQ_k)_{k = 1}^{r}$, for $1 \leq p \leq r - 1$ we have
	\begin{enumerate}
		\item [(1)] If $p < \ell$
		\[
		\frac{d \bL_\ell}{d \bQ_{p}} = \bm{0}
		\]
		\item [(2)] If $p > \ell$
		\[
		\frac{d \bL_\ell}{d \bQ_{p}} = (x_p - x_{p - 1}) (\bxi''(\bQ_p) \odot \bC)
		\]
		\item [(3)] If $p = \ell$
		\[
		\frac{d \bL_\ell}{d \bQ_{p}} = x_p (\bxi''(\bQ_p) \odot \bC).
		\]
	\end{enumerate}
	Using the formulas in [Appendix~\ref{app:matrixderivs}] and the chain rule on \eqref{eq:derivp}, the derivatives in direction $2\bC$ (the constant $2$ is to cancel the constant factor of $\frac{1}{2}$ in front of $\sP_r$)  for $2 \leq p \leq r-1$ are given by
	\begin{align}
	\partial_{\bQ_p} \sP_r &= -(x_p - x_{p - 1}) \tr \Big( \bL_1^{-1}  \big( \vh \vh^T + \bxi'(\bQ_1) \big) \bL_1^{-1} \times (\bxi''(\bQ_p) \odot \bC) \Big)   \notag
	\\&\quad -(x_p - x_{p - 1}) \sum_{1 \leq \ell < p} \Big( \frac{1}{x_\ell} - \frac{1}{x_{\ell - 1}} \Big) \tr\big( \bL^{-1}_\ell (\bxi''(\bQ_p) \odot \bC)) \big) \notag
	\\&\quad - x_p \Big( \frac{1}{x_p} - \frac{1}{x_{p - 1}} \Big) \tr\big( \bL^{-1}_p (\bxi''(\bQ_p) \odot \bC) \big) - \frac{(x_p - x_{p - 1})}{x_1} \tr( \bL_1^{-1} (\bxi''(\bQ_p) \odot \bC) ) \notag
	\\&\quad + (x_p - x_{p - 1}) \tr( \bQ_{p} (\bxi''(\bQ_p) \odot \bC) ) \notag
	\\&= -(x_p - x_{p - 1}) \tr \Big( \bL_1^{-1}  \big( \vh \vh^T + \bxi'(\bQ_1) \big) \bL_1^{-1} \times (\bxi''(\bQ_p) \odot \bC) \Big)   \notag
	\\&\quad -(x_p - x_{p - 1})  \sum_{1 \leq k \leq p - 1} \frac{1}{x_k} \Big( \tr( (\bL^{-1}_{k} - \bL^{-1}_{k + 1}) \times (\bxi''(\bQ_p) \odot \bC))\Big)  \notag
	\\&\quad + (x_p - x_{p - 1}) \tr( \bQ_{p} \times (\bxi''(\bQ_p) \odot \bC) ), \notag
	\end{align}
	and the derivative for $p = 1$ is given by
	\begin{align}
	\partial_{\bQ_1} \sP_r &= -x_1 \tr \Big( \bL_1^{-1}  \big( \vh \vh^T + \bxi'(\bQ_1) \big) \bL_1^{-1} \times (\bxi''(\bQ_p) \odot \bC) \Big)  + x_1 \tr( \bQ_{1} \times (\bxi''(\bQ_p) \odot \bC) ) \notag.
	\end{align}
	
	\subsection{Derivative of $\sC_r$ with respect to $Q$}\label{app:derivativesC}
	
	We use summation by parts to write $\sC_r(\vex,\vbQ)$ as
	\begin{align}
	\sC_r(\vex,\vbQ) &= \frac{1}{2}\Big[ \tr( \vh \vh^\trans \bd_1) + \tr( \bQ_1 \bd_1^{-1} )  + \frac{1}{x_1} \log |\bd_1| + \sum_{2 \leq k \leq r-1} \log|\bd_{k}| \Big( \frac{1}{x_{k}} - \frac{1}{x_{k - 1}} \Big) \notag \\
	&\quad  + \sum_{1 \leq k \leq r-1} x_k \cdot \tr \Big(\bm{1} \times \big( \bxi (\bQ_{k + 1}) - \bxi (\bQ_{k})\big) \Big) \Big] \label{eq:derivc}.
	\end{align}
	Since $(\bd_k)_{k =  1}^r$ is a function of $(\bQ_k)_{k = 1}^r$, for $1 \leq p \leq r - 1$ we have
	\begin{enumerate}
		\item [(1)] If $p < \ell$
		\[
		\frac{d \bd_\ell}{d \bQ_{p}} = 0
		\]
		\item [(2)] If $p > \ell$
		\[
		\frac{d \bd_\ell}{d \bQ_{p}} = (x_{p - 1} - x_p) \bC
		\]
		\item [(3)] If $p = \ell$
		\[
		\frac{d \bd_\ell}{d \bQ_{p}} = -x_p \bC.
		\]
	\end{enumerate}
	Using the formulas in [Appendix~\ref{app:matrixderivs}] and the chain rule on \eqref{eq:derivc}, the derivatives in direction $2\bC$ (the constant $2$ is to cancel the constant factor of $\frac{1}{2}$ in front of $\sC_r$) for $2 \leq p \leq r-1$ are given by
	\begin{align}
	\partial_{\bQ_p}\sC_r &= (x_{p - 1} - x_p) \tr( \vh \vh^T \bC ) - (x_{p - 1} - x_p) \tr( \bd_1^{-1} \bQ_1 \bd_1^{-1} \bC ) + \frac{1}{x_1} (x_{p - 1} - x_p) \tr(\bd_1^{-1} \bC) \notag
	\\&\quad + \sum_{2 \leq k < p} (x_{p - 1} - x_p) \Big( \frac{1}{x_{k}} - \frac{1}{x_{k - 1}} \Big) \tr(\bd_{k}^{-1} \bC  ) - x_p \Big( \frac{1}{x_{p}} - \frac{1}{x_{p - 1}} \Big) \tr(\bd^{-1}_{p} \bC) \notag
	\\&\quad + (x_{p - 1} - x_p) \tr( \bxi'(\bQ_p) \bC ) \notag
	\\&= (x_{p - 1} - x_p) \tr( \vh \vh^T \bC ) - (x_{p - 1} - x_p) \tr( \bd_1^{-1} \bQ_1 \bd_1^{-1} \bC ) \notag
	\\&\quad - (x_{p - 1} - x_p) \sum_{1 \leq k \leq p-1} \frac{1}{x_k} \tr( (\bd^{-1}_{k+1} - \bd^{-1}_{k }) \bC) + (x_{p - 1} - x_p) \tr( \bxi'(\bQ_p) \bC ), \notag
	\end{align}
	and the derivative for $p = 1$ is given by
	\begin{align}
	\partial_{\bQ_1}\sC_r &= - x_1 \tr( \vh \vh^T \bC ) + x_1 \tr( \bd_1^{-1} \bQ_1 \bd_1^{-1} \bC )  - x_1 \tr( \bxi'(\bQ_1) \bC) \notag.
	\end{align}
	
\end{appendices}

\end{document}